\documentclass[12pt,a4paper,twoside]{article}
\usepackage[ansinew]{inputenc}
\usepackage{amsmath}
\usepackage{amsfonts}
\usepackage{amssymb}
\usepackage{amsthm}
\usepackage{tabu}
\usepackage{tikz}
\usepackage{ifthen}
 \usetikzlibrary{shapes.geometric,arrows,snakes,backgrounds}

\makeatletter
\def\@seccntformat#1{\csname the#1\endcsname.\quad}
\makeatother

\newcommand{\OJO}{\textbf{???}\ }
\def\R{\mathbb{R}}
\def\0{\mathbf{0}}
\def\ii{\mathbf{i}}
\def\RR{\mathbb{R}^2}
\def\SS{\mathbb{S}^2}
\newcommand{\diam}{\mathop{\rm diam }\nolimits}
\newcommand{\Bd}{\mathop{\rm Bd}\nolimits}
\newcommand{\Inte}{\mathop{\rm Int}\nolimits}
\newcommand{\Cl}{\mathop{\rm Cl}\nolimits}

\definecolor{beren}{rgb}{0.4,0.04,0.28}
\definecolor{rojo}{rgb}{0.5,0,0}
\definecolor{verde}{rgb}{0,0.4,0}
\definecolor{granate}{RGB}{130,24,130}
\definecolor{azuloscuro}{rgb}{0,0,0.8}
\definecolor{naranja}{RGB}{255,96,0}
\definecolor{marron}{rgb}{0,0.4,0}

\newcommand{\Fizq}{\LARGE $\boldsymbol{\leftleftarrows}$}
\newcommand{\Fder}{\LARGE $\boldsymbol{\rightrightarrows}$}
\newcommand{\Farr}{\LARGE $\boldsymbol{\upuparrows}$}
\newcommand{\Faba}{\LARGE $\boldsymbol{\downdownarrows}$}
\newcommand{\fizq}{\large $\boldsymbol{\leftleftarrows}$}
\newcommand{\fder}{\large $\boldsymbol{\rightrightarrows}$}
\newcommand{\farr}{\large $\boldsymbol{\upuparrows}$}
\newcommand{\faba}{\large $\boldsymbol{\downdownarrows}$}

\newcommand{\SWNE}{$\boldsymbol{\nearrow}$}
\newcommand{\NWSE}{$\boldsymbol{\searrow}$}
\newcommand{\SENW}{$\boldsymbol{\nwarrow}$}
\newcommand{\NESW}{$\boldsymbol{\swarrow}$}

\newcommand{\swne}{\tiny $\boldsymbol{\nearrow}$}
\newcommand{\nwse}{\tiny $\boldsymbol{\searrow}$}
\newcommand{\senw}{\tiny $\boldsymbol{\nwarrow}$}
\newcommand{\nesw}{\tiny $\boldsymbol{\swarrow}$}

\newcounter{letra}
\renewcommand{\theletra}{\Alph{letra}}
\newtheorem{theorem}{Theorem}[section]
\newtheorem{proposition}[theorem]{Proposition}

\newtheorem{lemma}[theorem]{Lemma}
\newtheorem{maintheorem}[letra]{Theorem}

\theoremstyle{definition}
\newtheorem{definition}[theorem]{Definition}
\theoremstyle{remark}
\newtheorem{remark}[theorem]{Remark}

\theoremstyle{plain}

\oddsidemargin -0.25cm \evensidemargin -0.25cm \topmargin -0.5cm
\setlength{\textwidth}{16.5cm} \setlength{\textheight}{22.5cm}


\newcounter{versionfinal}
\setcounter{versionfinal}{0} 

\ifthenelse{\value{versionfinal}=1}{
\newcommand{\josegines}[1]{}

\newcommand{\corregidooriginal}[2]{
%
%
}
\newcommand{\borrar}[1]{
}
\newcommand{\morado}[1]{\textcolor{black}{#1}}
\newcommand{\moradomucho}{\color{black}}
\newcommand{\negromucho}{\color{black}}
\newcommand{\apendices}[1]{
}
\renewcommand{\poneralfinal}{}
}{

\newcommand{\josegines}[1]{\renewcommand{\thefootnote}{\bfseries\color{red}\arabic{footnote}}\footnote{\textcolor{red}{\textbf{Nota de Jose Gines: } #1}}\renewcommand{\thefootnote}{\arabic{footnote}}}
\newcommand{\victor}[1]{\renewcommand{\thefootnote}{\bfseries\color{blue!50!black}\arabic{footnote}}\footnote{\textcolor{blue!50!black}{\textbf{Nota de Victor: } #1}}\renewcommand{\thefootnote}{\arabic{footnote}}}


\newcommand{\corregidooriginal}[2]{#1

{\bfseries \color{red} #2}

}

\newcommand{\borrar}[1]{
}

\newcommand{\morado}[1]{\textcolor{purple}{#1}}
\newcommand{\azul}[1]{\textcolor{blue}{#1}}

\newcommand{\moradomucho}{\color{purple}}
\newcommand{\negromucho}{\color{black}}

\newcommand{\apendices}[1]{#1}

}

\begin{document}

\title{A topological classification of plane polynomial systems
having a globally attracting singular point}
\author{Jos\'e Gin\'es Esp\'in Buend\'ia and V\'ictor Jim\'enez
Lop\'ez}
\date{\normalsize{Universidad de Murcia (Spain)}\\
\normalsize{\today}}
\maketitle

\begin{abstract}
In this paper, plane polynomial systems having a singular point
attracting all orbits in positive time are classified up to
topological equivalence. This is done by assigning a combinatorial
invariant to the system (a so-called ``feasible set'' consisting
of finitely many vectors with components in the set $\{n/3:
n=0,1,2,\ldots\}$), so that two such systems are equivalent if and
only if (after appropriately fixing an orientation in
$\mathbb{R}^2$ and a heteroclinic separatrix) they have the same
feasible set. In fact, this classification is achieved in the more
general setting of continuous flows having finitely many
separatrices.

Polynomial representatives for each equivalence class are found,
although in a non-constructive way. Since, to the best of our
knowledge, the literature does not provide any concrete polynomial
system having a non-trivial globally attracting singular point, an
explicit example is given as well.
\end{abstract}

\noindent{\bf Keywords:} global unstable attraction, plane
polynomial systems, topological equivalence.

\smallbreak \noindent{\bf 2010 Mathematics Subject
Classification:} 34C37, 37C10, 37C15.

\section{Introduction and statements of the main results}

Classifying the phase portraits of plane polynomial systems, that
is, those of the form
 $$
 \begin{cases}
 x'=P(x,y), \\
 y'=Q(x,y),
 \end{cases}
 $$
with $P(x,y)$ and $Q(x,y)$ polynomials in the variables $x$ and
$y$, is a classical problem (many would say the problem \emph{par
excellence}) of the qualitative theory of differential equations.
As a whole it is a daunting, probably insurmountable, task, which
if completed would provide, as a by-product, an answer to the
famous (second part of the) Hilbert 16th problem asking for a
bound $H(n)$ on the number of limit cycles of the system in terms
of the maximum degree $n$ of $P(x,y)$ and $Q(x,y)$. Presently,
this bound is unknown even in the quadratic case $n=2$; in fact,
although there are strong reasons to conjecture $H(2)=4$, not even
the finiteness of $H(2)$ has been established.

Understandably, researchers in this area have added dynamical
and/or analytic restrictions to the problem, as in \cite{RSv},
where it is shown that any $C^1$-structurally stable system with
finitely many singular points and limit cycles is topologically
equivalent to a polynomial system, or as in \cite{BD}, where
complex polynomial systems (that is, polynomial systems such that
$P(x,y)$ and $Q(x,y)$ satisfy the Cauchy-Riemann conditions) are
fully described in terms of appropriate combinatorial and analytic
data.

No doubt fuelled by the search of a proof for the elusive equality
$H(2)=4$, quadratic systems have got the lion's share of this
work. Here, among many others, cordal \cite{GLL}, Lotka-Volterra
\cite{SV}, those having a center \cite{Vu}, homogeneous \cite{Da},
Hamiltonian \cite{AL}, and bounded systems \cite{DP} have been
classified up to topological equivalence. The monograph \cite{Re}
is specifically devoted to this subject; interestingly, in p.~303
there, the number of possible portrait phases for quadratic
systems (under the hypothesis $H(2)=4$) is estimated to be around
2000.

Somewhat surprisingly, the most natural problem of classifying
polynomial systems with a globally attracting singular point, that
is, those whose orbits tend in positive time to the same singular
point (which we can assume, without loss of generality, to be the
origin $\0$), has not been studied yet. A possible explanation for
this is that such a classification is pretty trivial in the
quadratic realm: these systems are equivalent to the linear
attracting node $x'=-x, y'=-y$. The reason is the following. As we
will see below (Remark~\ref{ell-hyp}), in order to avoid the above
trivial case, the finite sectorial decomposition at $\0$ must
include both an elliptic and a hyperbolic sector. Such a local
behaviour is certainly possible for quadratic systems: an explicit
example with an \emph{elliptic saddle} (that is, a decomposition
consisting exactly of one elliptic sector and one hyperbolic
sector) can be found in \cite[p. 368]{ALGM}. Nevertheless, global
attraction implies that the system is bounded (that is, it has
bounded positive semi-orbits), and for these systems the existence
of elliptic sectors at singular points is excluded by \cite{DP}.
Incidentally, if a $C^1$-system is locally holomorphic at $\0$,
that is, the Cauchy-Riemann conditions hold near $\0$, then either
$\0$ is a topological node or the sectorial decomposition consists
of exactly evenly many elliptic sectors \cite{BT}. Therefore,
non-trivial global attraction is also impossible in this case.

In the present paper we fulfil this gap by classifying polynomial
global attraction up to topological equivalence. Indeed we work in
the much more general setting of (continuous) flows with finitely
many separatrices (or equivalently, see Remark~\ref{daigual},
those having the finite sectorial decomposition property at $\0$,
or those having finitely many unstable orbits), when their
separatrix skeletons (the union of all separatrices and exactly
one orbit from each region in the complementary set) are also
finite. To begin with, there is a dichotomy: global attraction is
trivial if and only if $\0$ is positively stable, that is, there
are no regular homoclinic orbits
(Proposition~\ref{atractorestable}(i)). Hence we concentrate in
what follows in the ``non-positively stable'' case, when at least
(as implied by Proposition~\ref{atractorestable}(ii)) one
heteroclinic separatrix must exist. We rely on a well-known result
by Markus \cite{Ma}, later extended by Neumann \cite{Ne} (see also
\cite{EJ}), stating that two flows are equivalent if and only if
there is a plane homeomorphism preserving the orbits and time
directions of their separatrix skeletons (Theorem~\ref{markus}).
As it turns out, a weaker so-called \emph{compatibility} condition
(just assuming preservation of orbits, see
Subsection~\ref{extensiones}) suffices, provided that at least one
heteroclinic separatrix is preserved as well. Moreover, after
fixing an orientation in $\RR$ (counterclockwise or clockwise) and
a heteroclinic separatrix, and using the skeleton combinatorial
structure, there is a canonical way to associate  a so-called
feasible set (a finite vectorial set as described in
Definition~\ref{feasible}) to the flow, and this labelling
characterizes equivalence: topologically equivalent flows have the
same canonical feasible set. We emphasize that although the
separatrix skeleton is not uniquely defined, no ambiguity arises
because the corresponding canonical feasible sets are the same
(this follows from Lemma~\ref{controlado}).

Our first theorem summarizes these results.

\begin{maintheorem}
 \label{principal}
 Assume that $\0$ is a global
 attractor, non-positively stable, for two flows $\Phi$ and
 $\Phi'$, both having finitely many separatrices, and let
 $\mathcal{X}$ and $\mathcal{X'}$ denote
 their separatrix skeletons. Then the following
 statements are equivalent:
 \begin{itemize}
  \item[(i)] $\Phi$ and $\Phi'$ are topologically equivalent.
  \item[(ii)] $\mathcal{X}$ and $\mathcal{X'}$ are compatible
   and  the compatibility bijection
   $\xi:\mathcal{X}\to \mathcal{X'}$ maps some
   heteroclinic separatrix of $\Phi$ to a heteroclinic
   separatrix of $\Phi'$.
  \item[(iii)] There are respective
   orientations $\Theta,\Theta'$ in $\RR$ and heteroclinic
   separatrices $\Sigma,\Sigma'$
   such that the associated canonical feasible sets are the same.
 \end{itemize}
\end{maintheorem}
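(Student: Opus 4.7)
The plan is to establish the cycle (i) $\Rightarrow$ (ii) $\Rightarrow$ (iii) $\Rightarrow$ (i), with Theorem~\ref{markus} together with its compatibility-based strengthening from Subsection~\ref{extensiones} acting as the bridge between the topological and the combinatorial sides.

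For (i) $\Rightarrow$ (ii), I would invoke Markus-Neumann directly: a topological equivalence between $\Phi$ and $\Phi'$ provides a homeomorphism of $\RR$ carrying $\mathcal{X}$ to $\mathcal{X'}$ orbit-to-orbit and respecting time direction, and restricting this to the skeletons yields a compatibility bijection $\xi:\mathcal{X}\to\mathcal{X'}$. Since heteroclinicity of a separatrix is a topological property of its $\alpha$- and $\omega$-limit, any heteroclinic separatrix of $\Phi$ is automatically sent by $\xi$ to a heteroclinic separatrix of $\Phi'$, giving the extra condition demanded in (ii).

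For (ii) $\Rightarrow$ (iii), let $\Sigma$ be a heteroclinic separatrix as in (ii) and set $\Sigma':=\xi(\Sigma)$. I would choose orientations $\Theta$ and $\Theta'$ so that $\xi$ respects the induced cyclic order of separatrices at $\0$, flipping $\Theta'$ if necessary. Because the canonical feasible set is built solely from the oriented cyclic arrangement of separatrices around $\0$, the sector types they bound, and the choice of distinguished starting separatrix, and all of these data are transported by $\xi$, the feasible sets associated with $(\Phi,\Theta,\Sigma)$ and $(\Phi',\Theta',\Sigma')$ must coincide term by term.

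For (iii) $\Rightarrow$ (i), the game is to reconstruct an equivalence from a purely combinatorial coincidence. My first move is to show that agreement of the canonical feasible sets forces a bijection $\eta$ between separatrices that preserves the oriented cyclic order at $\0$, preserves sector types, and sends $\Sigma$ to $\Sigma'$; this is essentially the inverse of the labelling procedure of Definition~\ref{feasible}, and it is well posed thanks to Lemma~\ref{controlado}, which guarantees that the feasible set does not depend on the particular choices made in forming the skeleton. Next I would extend $\eta$ to a full compatibility bijection $\xi:\mathcal{X}\to\mathcal{X'}$ by pairing the representative orbits of the complementary regions according to the cyclic data, and realise $\xi$ by a plane homeomorphism preserving the orbit structures. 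Since $\xi(\Sigma)=\Sigma'$ is heteroclinic, the compatibility version of Markus-Neumann announced in Subsection~\ref{extensiones} then delivers the desired topological equivalence between $\Phi$ and $\Phi'$.

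The hard part will be this final reconstruction: one has to verify that the finite vector-valued label of Definition~\ref{feasible} is injective on the set of oriented, pointed combinatorial skeletons, and that the recovered combinatorial isomorphism is actually realisable by a plane homeomorphism. Controlling how separatrices glue into sectors at $\0$ and how complementary regions close up at infinity on $\SS$, without any loss of information when passing through the numerical label, is where the bulk of the work lies, and where Lemma~\ref{controlado} is expected to carry most of the weight.
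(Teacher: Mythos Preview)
Your overall cycle (i)$\Rightarrow$(ii)$\Rightarrow$(iii)$\Rightarrow$(i) matches the paper, and your (i)$\Rightarrow$(ii) is fine. But you have misidentified both where the real difficulty lies and which lemma carries it.

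The genuine subtlety is in (ii)$\Rightarrow$(iii), and it is one you have glossed over. A compatibility bijection $\xi$ comes with a circle homeomorphism $\mu$ that preserves \emph{orbits} but is \emph{not} assumed to preserve \emph{directions}: $\mu$ need not send $\omega$-points to $\omega$-points (see the definitions in Subsection~\ref{extensiones}). So your sentence ``all of these data are transported by $\xi$'' is exactly what is in doubt---the $\alpha$/$\omega$ labelling of points on the small circle, and hence the parity data entering the feasible set, is \emph{not} a priori carried by compatibility. The paper handles this with Lemma~\ref{redundante}, which says that a bijection from a feasible set to the configuration $X$ preserving only orders and orbits already forces equality of feasible sets; directions come for free from the combinatorial structure. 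Without invoking this lemma (or reproving its content), your argument for (ii)$\Rightarrow$(iii) has a gap.

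Conversely, you overestimate the difficulty of (iii)$\Rightarrow$(i). Here the canonical bijections $\psi:L\to X$ and $\psi':L\to X'$ preserve orders, orbits \emph{and} directions by construction (this is where Lemma~\ref{paridad} does its work, well before the proof of the theorem). Composing them gives a compatibility bijection $\xi$ together with a $\mu$ that already preserves directions, and then Subsection~\ref{extensiones} plus Theorem~\ref{markus} finish immediately. There is no delicate ``injectivity of the labelling'' step to perform at this stage, and Lemma~\ref{controlado} is not the workhorse here---it is used earlier only to ensure the feasible set is independent of the choice of representative orbit in each canonical region.
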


Contrary to what one might initially expect, the index of the
global attractor plays no role in this topological classification.
In fact, after extending the flow to the Riemann sphere, we get
that $\infty$ is a repelling (topological) node
(Remark~\ref{fundamental}). Hence, its index is 1 and, by the
Poincar\'e-Hopf theorem \cite[p. 179]{DLA}, the index of the
attractor is 1 as well. Moreover, sharing (up to homeomorphisms)
the same finite sectorial decomposition is a necessary but not
sufficient condition for two such flows being topologically
equivalent, see Figure~\ref{FSDnobasta}. Likewise, compatibility
alone is not enough to guarantee topological equivalence, see
Figure~\ref{compatiblenobasta}.

\begin{figure}
\centering
\begin{tikzpicture}[scale=.15]


\begin{scope}[xshift=-28cm]
\draw[line width=0.75, color=gray]
   (3.0,-18.0) .. controls +(90:4) and +(225:5) ..  (11.0,-5.0)
                     .. controls +(45:4) and +(270:5) ..  (16.0, 8.0)
                     .. controls +(90:5) and +(0:7) ..  (2.0,29.0)
                     .. controls +(180:7) and +(80:4) ..  (-16.0,10.0)
                     .. controls +(260:4) and +(120:4) ..  (-13.0,0.0)
                     .. controls +(300:4) and +(215:5) ..  (-4.0,-4.0)
                     .. controls +(35:3) and +(225:3) ..  (0.0,0.0);

\draw[->,line width=1.0, color=gray]
    (11.0,-5.0) .. controls +(45:0.05) and +(225:0.05) ..  (11.0,-5.0);

\draw[line width=1.00]
   (-0.1,-20.0) .. controls +(90:1) and +(280:1) .. (0.1,-10.0)
           .. controls +(100:1) and +(270:0.5) ..  (0,0);
\draw[->, line width=1.00]
    (0.1,-10.0) .. controls +(100:0.05) and +(280:0.05) ..  (0.1,-10.0);

\draw[line width=1.00]
   (0.0,0.0) .. controls +(0:10) and +(260:4) .. (13.0,11.0)
           .. controls +(80:4) and +(0:4) ..  (0,25.0)
                     .. controls +(180:4) and +(80:10) ..  (-11,9.0)
                    .. controls +(260:10) and +(180:10) ..  (0,0);
\draw[->,line width=1.00]
    (-11,9.0) .. controls +(260:0.05) and +(80:0.05) .. (-11,9.0);

\draw[line width=1.00]
   (0.0,0.0) .. controls +(60:2) and +(270:2) .. (3.0,6.0)
           .. controls +(90:2) and +(0:3) ..  (-1.0,13.0)
                     .. controls +(180:3) and +(90:4) ..  (-4,7.5)
                    .. controls +(270:4) and +(110:4) ..  (0,0);
\draw[->, line width=1.00]
    (3.0,6.0) .. controls +(270:0.05) and +(90:0.05) .. (3.0,6.0);

\draw[line width=0.75, color=gray]
   (0.0,0.0) .. controls +(80:2) and +(265:2) .. (0.7,3.0)
           .. controls +(85:2) and +(0:3) ..  (-0.5,10.0)
                     .. controls +(180:3) and +(90:2) ..  (-2.0,5.5)
                    .. controls +(270:2) and +(95:4) ..  (0,0);
\draw[->,line width=1.0, color=gray]
    (0.7,3.0) .. controls +(270:0.05) and +(90:0.05) .. (0.7,3.0);

\draw[line width=0.75, color=gray]
   (0.0,0.0) .. controls +(20:2) and +(250:3) .. (9.5,6.0)
               .. controls +(70:3) and +(-60:3) ..  (9.3,14.0)
                       .. controls +(120:3) and +(-45:3) ..  (4.5,20.0)
                       .. controls +(135:3) and +(50:3) ..  (-5.0,18.0)
             .. controls +(230:2) and +(80:2) ..  (-9.0,11.0)
                       .. controls +(260:2) and +(155:3) ..  (-8.5,4.0)
                       .. controls +(-25:3) and +(225:3) ..  (-4.5,13.0)
                       .. controls +(45:3) and +(180:2) ..(0.0,15.0)
                       .. controls +(0:4) and +(60:3) ..  (3.0,3.0)
                       .. controls +(240:3) and +(40:2) ..  (0,0);

\draw[->,line width=1.0, color=gray]
    (-5,18.0) .. controls +(230:0.05) and +(50:0.05) .. (-5,18.0);

\fill[color=gray!110!black] (0,0) circle (0.5);
\end{scope}

\begin{scope}[xshift=28cm]

\draw[line width=1.0, color=gray]
   (-0.1,-20.0) .. controls +(90:1) and +(280:1) .. (0.1,-10.0)
           .. controls +(100:1) and +(270:0.5) ..  (0,0);
\draw[->, line width=1.0, color=gray]
    (0.1,-10.0) .. controls +(100:0.05) and +(280:0.05) ..  (0.1,-10.0);

\draw[line width=1.0]
   (8,-17.0) .. controls +(110:1) and +(290:2) .. (3,-8.0)
           .. controls +(110:2) and +(330:0.5) ..  (0,0);
\draw[->,line width=1.0]
    (3,-8.0) .. controls +(110:0.05) and +(290:0.05) ..  (3,-8.0);

\draw[line width=1.0]
   (-10,-18.0) .. controls +(70:1) and +(260:2) .. (-5,-9.0)
           .. controls +(80:2) and +(275:0.5) ..  (0,0);
\draw[->,line width=1.0]
    (-5,-9.0) .. controls +(80:0.05) and +(260:0.05) ..  (-5,-9.0);

\draw[line width=1.0, color=gray]
   (13.0,-17.5) .. controls +(110:3) and +(280:4) .. (10.0,-10.0)
           .. controls +(100:4) and +(270:5) ..  (15,0)
                     .. controls +(90:4) and +(0:5) ..  (8,30)
                     .. controls +(180:5) and +(90:5) ..  (0,0);
\draw[->, line width=1.0, color=gray]
    (15,0) .. controls +(90:0.05) and +(270:0.05) ..  (15,0);

\draw[line width=1.0, color=gray]
   (-14.0,-18.2) .. controls +(85:3) and +(260:4) .. (-11.0,-9.0)
           .. controls +(80:4) and +(270:5) ..  (-14,1)
                     .. controls +(90:4) and +(180:5) ..  (-9,29)
                     .. controls +(0:5) and +(120:5) ..  (0,0);
\draw[->, line width=1.0, color=gray]
    (-14,1) .. controls +(90:0.05) and +(270:0.05) ..  (-14,1);

\draw[line width=1.00]
   (-5, 30.0) .. controls +(280:1) and +(100:4) .. (-2,15.0)
           .. controls +(280:4) and +(270:0.5) ..  (0,0);
\draw[->, line width=1.00]
    (-2,15.0) .. controls +(280:0.05) and +(100:0.05) ..  (-2,15.0);

\draw[line width=1.00]
   (0, 0.0) .. controls +(315:1) and +(180:4) .. (7.0,-3.0)
            .. controls +(0:4) and +(270:2.5) ..  (12,3.0)
                        .. controls +(90:4) and +(0:5) ..  (8,15.0)
                        .. controls +(180:4) and +(45:0.5) ..  (0.0,0.0);
\draw[->, line width=1.00]
    (12,3.0) .. controls +(90:0.05) and +(270:0.05) ..  (12,3.0);

\draw[line width=1.0, color=gray]
   (0, 0.0) .. controls +(340:2) and +(180:4) .. (5.0,-1.0)
            .. controls +(0:4) and +(270:1.5) ..  (9.0,3.0)
                        .. controls +(90:2) and +(0:2) ..  (5.5,5.0)
                        .. controls +(180:2) and +(15:2.5) ..  (0.0,0.0);
\draw[->, line width=1.0, color=gray]
    (5.0,-1.0) .. controls +(0:0.05) and +(180:0.05) ..  (5.0,-1.0);

\draw[line width=1.00]
   (0, 0.0) .. controls +(210:3) and +(0:4) .. (-6.0,-4.0)
            .. controls +(180:4) and +(270:2.5) ..  (-11,3.5)
                        .. controls +(90:4) and +(180:5) ..  (-8.5,14.5)
                        .. controls +(0:4) and +(135:5) ..  (0.0,0.0);
\draw[->, line width=1.00]
    (-11,3.5) .. controls +(90:0.05) and +(270:0.05) ..  (-11,3.5);

\draw[line width=1.0, color=gray]
   (0, 0.0) .. controls +(190:2) and +(0:4) .. (-5.0,-1.5)
            .. controls +(180:4) and +(270:1.5) ..  (-10.0,2.5)
                        .. controls +(90:2) and +(180:2) ..  (-5.5,6.0)
                        .. controls +(0:2) and +(155:2.5) ..  (0.0,0.0);
\draw[->, line width=1.0, color=gray]
    (-5.0,-1.5) .. controls +(180:0.05) and +(0:0.05) ..  (-5.0,-1.5);

\fill[color=gray!110!black] (0,0) circle (0.5);

\end{scope}

\end{tikzpicture}

\caption{Two non-equivalent phase portraits with the same
sectorial decomposition
(elliptic-elliptic-hyperbolic-attracting-hyperbolic in
counterclockwise sense) at the origin.\label{FSDnobasta}}
\end{figure}
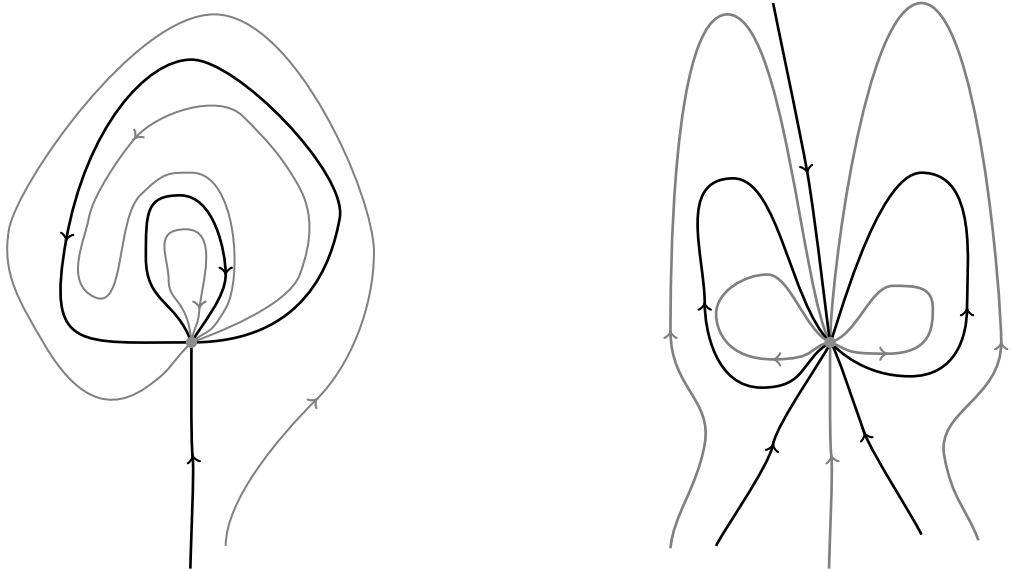

\begin{figure}
\centering

\begin{tikzpicture}[scale=0.21]

\begin{scope}[xshift=0cm]

\draw[line width=1]
   (17,0) .. controls +(180:4) and +(0:4) .. (15,0)
           .. controls +(180:4) and +(0:4) ..  (0,0);
\draw[->, line width=1]
    (15,0) .. controls +(180:0.05) and +(0:0.05) ..  (15,0);

\draw[line width=1]
   (-20,0) .. controls +(0:4) and +(180:4) .. (-15,0)
           .. controls +(0:4) and +(180:4) ..  (0,0);
\draw[->, line width=1]
    (-15,0) .. controls +(0:0.05) and +(180:0.05) ..  (-15,0);

\draw[line width=0.75, color=gray]
   (15.2873, 8.60364) .. controls +(135:2) and +(0:3) .. (7.42021, 12.9)
	                    .. controls +(180:2) and +(70:4) .. (3.42021, 9.39692)
           .. controls +(250:4) and +(80:4) ..  (0,0);
\draw[->, line width=0.75, color=gray]
    (7.42021, 12.9) .. controls +(180:0.05) and +(0:0.05) ..  (7.42021, 12.9);

\draw[line width=1.0]
   (0,0) .. controls +(10:2) and +(210:3) .. (9.65926, 2.58819)
         .. controls +(30:3) and +(-45:4) ..  (12.2873, 8.60364)
				 .. controls +(135:4) and +(60:3) ..  (5.73577, 8.19152)
				 .. controls +(240:3) and +(60:2) ..  (0.0, 0.0);
\draw[->, line width=0.75, color=gray]
   (12.2873, 8.60364) .. controls +(135:0.05) and +(-45:0.05) .. (12.2873, 8.60364);

\draw[line width=0.75, color=gray]
   (0,0) .. controls +(20:2) and +(210:2) .. (9.06308, 4.22618)
         .. controls +(30:2) and +(-45:2) ..  (10.649, 7.45649)
				 .. controls +(135:2) and +(60:2) ..  (6.42788, 7.66044)
				 .. controls +(240:2) and +(50:2) ..  (0.0, 0.0);
\draw[->, line width=0.75, color=gray]
   (10.649, 7.45649) .. controls +(135:0.05) and +(-45:0.05) .. (10.649, 7.45649);

\draw[line width=1.0]
   (0,0) .. controls +(90:2) and +(290:3) .. (-0.871543, 9.96195)
         .. controls +(110:3) and +(45:4) ..  (-6.33925, 13.5946)
				 .. controls +(225:4) and +(120:3) ..  (-7.07105, 7.07108)
				 .. controls +(300:3) and +(135:2) ..  (0.0, 0.0);
\draw[->, line width=1.0]
   (-6.33925, 13.5946) .. controls +(45:0.05) and +(225:0.05) .. (-6.33925, 13.5946);

\draw[line width=0.75, color=gray]
   (0,0) .. controls +(95:2) and +(290:2) .. (-2.58818, 9.65926)
         .. controls +(110:2) and +(45:2) ..  (-5.49402, 11.782)
				 .. controls +(225:2) and +(120:2) ..  (-5.73575, 8.19153)
				 .. controls +(300:2) and +(130:2) ..  (0.0, 0.0);
\draw[->, line width=0.75, color=gray]
  (-5.49402, 11.782) .. controls +(45:0.05) and +(225:0.05) .. (-5.49402, 11.782);

\draw[line width=1.0]
   (0,0) .. controls +(155:2) and +(-20:3) .. (-8.66024, 5.00002)
         .. controls +(160:3) and +(70:2) ..  (-14.0954, 5.13034)
				 .. controls +(250:2) and +(160:3) ..  (-9.84807, 1.73651)
				 .. controls +(340:3) and +(175:2) ..  (0.0, 0.0);
\draw[->, line width=1.0]
   (-14.0954, 5.13034) .. controls +(70:0.05) and +(250:0.05) .. (-14.0954, 5.13034);

\draw[line width=0.75, color=gray]
   (0,0) .. controls +(160:2) and +(-20:3) .. (-9.06307, 4.2262)
         .. controls +(160:3) and +(70:1) ..  (-12.216, 4.44629)
				 .. controls +(250:1) and +(160:1) ..  (-9.65925, 2.58821)
				 .. controls +(340:3) and +(170:2) ..  (0.0, 0.0);
\draw[->, line width=0.75, color=gray]
   (-12.216, 4.44629) .. controls +(70:0.05) and +(250:0.05) .. (-12.216, 4.44629);


\draw[line width=1.0]
   (0,0) .. controls +(185:2) and +(45:3) .. (-9.84808, -1.73645)
         .. controls +(225:3) and +(135:2) ..  (-13.2442, -7.04203)
				 .. controls +(315:2) and +(225:3) ..  (-7.07109, -7.07104)
				 .. controls +(45:3) and +(230:2) ..  (0.0, 0.0);
\draw[->,line width=1.0]
   (-13.2442, -7.04203) .. controls +(315:0.05) and +(135:0.05) .. (-13.2442, -7.04203);

\draw[line width=0.75, color=gray]
   (0,0) .. controls +(190:2) and +(45:1) .. (-9.39694, -3.42017)
         .. controls +(225:1) and +(135:2) ..  (-11.4783, -6.1031)
				 .. controls +(315:2) and +(225:1) ..  (-8.19154, -5.73574)
				 .. controls +(45:1) and +(225:2) ..  (0.0, 0.0);
\draw[->, line width=0.75, color=gray]
   (-11.4783, -6.1031) .. controls +(315:0.05) and +(135:0.05) .. (-11.4783, -6.1031);

\draw[line width=1.0]
   (0,0) .. controls +(245:2) and +(90:3) .. (-5.00003, -8.66024)
         .. controls +(270:3) and +(180:2) ..  (-2.60478, -14.7721)
				 .. controls +(0:2) and +(260:3) ..  (1.73644, -9.84808)
				 .. controls +(80:3) and +(290:2) ..  (0.0, 0.0);
\draw[->,line width=1.0]
   (-2.60478, -14.7721) .. controls +(0:0.05) and +(180:0.05) .. (-2.60478, -14.7721);

\draw[line width=0.75, color=gray]
   (0,0) .. controls +(245:2) and +(90:1) .. (-3.42024, -9.39691)
         .. controls +(270:1) and +(180:2) ..  (-2.25748, -12.8025)
				 .. controls +(0:2) and +(260:1) ..  (-0.0000398038, -10.)
				 .. controls +(80:1) and +(290:2) ..  (0.0, 0.0);
\draw[->, line width=0.75, color=gray]
   (-2.25748, -12.8025) .. controls +(0:0.05) and +(180:0.05) .. (-2.25748, -12.8025);

\draw[line width=1.0]
   (0,0) .. controls +(295:2) and +(130:1) .. (4.22614, -9.0631)
         .. controls +(310:1) and +(225:4) ..  (11.4906, -9.64187)
				 .. controls +(45:4) and +(-30:3) ..  (9.65925, -2.58824)
				 .. controls +(150:3) and +(290:2) ..  (0.0, 0.0);
\draw[->,line width=1.0]
   (11.4906, -9.64187) .. controls +(45:0.05) and +(225:0.05) .. (11.4906, -9.64187);

\draw[line width=0.75, color=gray]
   (0,0) .. controls +(300:2) and +(130:3) .. (6.42784, -7.66047)
         .. controls +(310:3) and +(225:2) ..  (9.95854, -8.35629)
				 .. controls +(45:2) and +(-30:3) ..  (8.66023, -5.00004)
				 .. controls +(150:3) and +(280:2) ..  (0.0, 0.0);
\draw[->, line width=0.75, color=gray]
   (9.95854, -8.35629) .. controls +(45:0.05) and +(225:0.05) .. (9.95854, -8.35629);

\draw[line width=0.75, color=gray]
   (-12.60478, -12.7721) .. controls +(-30:4) and +(180:6) .. (-2.60478, -16.7721)
                         .. controls +(0:4) and +(225:6).. (11.4906, -11.64187)
         .. controls +(45:8) and +(350:6) ..  (9.96194, -0.87161)
				 .. controls +(170:6) and +(-15:6) ..  (0, 0);
\draw[->, line width=0.75, color=gray]
   (11.4906, -11.64187) .. controls +(45:0.05) and +(225:0.05) .. (11.4906, -11.64187);
	
\fill[color=gray!110!black] (0,0) circle (.3);

\draw[dashed, line width=0.4] (0,0) circle [radius=10];

\fill (10.0, 0.0) circle (.2); \node[scale=0.4] at (10.5, 0.5) {$2$};
\fill (9.65926, 2.58819) circle (.2); \node[scale=0.4] at (10.2, 2.0) {$3$};
\fill (9.06308, 4.22618) circle (.2); \node[scale=0.4] at (9.7308, 3.8) {$4$};
\fill (6.42788, 7.66044) circle (.2); \node[scale=0.4] at (7.2788, 7.66044) {$5$};
\fill (5.73577, 8.19152) circle (.2); \node[scale=0.4] at (5.53577, 9.19152) {$6$};
\fill (3.42021, 9.39692) circle (.2); \node[scale=0.4] at (4.12021, 9.65692) {$7$};
\fill (-0.871543, 9.96195) circle (.2); \node[scale=0.4] at (-0.841543, 10.96195) {$8$};
\fill (-2.58818, 9.65926) circle (.2); \node[scale=0.4] at (-2.48818, 10.65926) {$9$};
\fill (-5.73575, 8.19153) circle (.2); \node[scale=0.4] at (-5.73575, 9.19153) {$10$};
\fill (-7.07105, 7.07108) circle (.2); \node[scale=0.4] at (-7.07105, 8.07108) {$11$};
\fill (-8.66024, 5.00002) circle (.2); \node[scale=0.4] at (-8.76024, 5.90002) {$12$};
\fill (-9.06307, 4.2262) circle (.2); \node[scale=0.4] at (-9.8307, 3.8262) {$13$};
\fill (-9.65925, 2.58821) circle (.2); \node[scale=0.4] at (-8.905925, 2.9) {$14$};
\fill (-9.84807, 1.73651) circle (.2); \node[scale=0.4] at (-10.584807, 0.93651) {$15$};
\fill (-10.0, 0.0) circle (.2); \node[scale=0.4] at (-10.8, -0.5) {$16$};

\fill (-9.84808, -1.73645) circle (.2); \node[scale=0.4] at (-10.84808, -1.73645) {$17$};
\fill (-9.39694, -3.42017) circle (.2); \node[scale=0.4] at (-10.39694, -3.42017) {$18$};
\fill (-8.19154, -5.73574) circle (.2); \node[scale=0.4] at (-9.19154, -5.73574) {$19$};
\fill (-7.07109, -7.07104) circle (.2); \node[scale=0.4] at (-8.07109, -7.07104) {$20$};

\fill (-5.00003, -8.66024) circle (.2); \node[scale=0.4] at (-6.00003, -9.16024) {$21$};
\fill (-3.42024, -9.39691) circle (.2); \node[scale=0.4] at (-4.22024, -9.99691) {$22$};
\fill (-0.0000398038, -10.) circle (.2); \node[scale=0.4] at (-0.7, -10.5) {$23$};
\fill (1.73644, -9.84808) circle (.2); \node[scale=0.4] at (2.33644, -10.84808) {$24$};

\fill (4.22614, -9.0631) circle (.2); \node[scale=0.4] at (4.22614, -10.0631) {$25$};
\fill (6.42784, -7.66047) circle (.2); \node[scale=0.4] at (6.42784, -8.96047) {$26$};
\fill (8.66023, -5.00004) circle (.2); \node[scale=0.4] at (8.76023, -6.40004) {$27$};
\fill (9.65925, -2.58824) circle (.2); \node[scale=0.4] at (9.85925, -3.58824) {$28$};

\fill (9.96194, -0.87161) circle (.2); \node[scale=0.4] at (10.56194, -0.57161) {$1$};

\end{scope}

\begin{scope}[xshift=36cm]

\draw[line width=1.0]
   (18.9277, -1.65606) .. controls +(175:1) and +(355:1) .. (15.9391, -1.39458) 
	                     .. controls +(175:1) and +(355:1) ..  (9.96194, -0.87161)
				               .. controls +(175:5) and +(355:6) ..  (0, 0);
\draw[->, line width=1.0]
   (15.9391, -1.39458)  .. controls +(175:0.05) and +(355:0.05) .. (15.9391, -1.39458) ;

\draw[line width=1.0]
   (0,0) .. controls +(10:2) and +(210:3) .. (9.65926, 2.58819)
         .. controls +(30:3) and +(-45:4) ..  (12.2873, 8.60364)
				 .. controls +(135:4) and +(60:3) ..  (5.73577, 8.19152)
				 .. controls +(240:3) and +(60:2) ..  (0.0, 0.0);
\draw[->, line width=1.0]
   (12.2873, 8.60364) .. controls +(-45:0.05) and +(135:0.05) .. (12.2873, 8.60364);

\draw[line width=0.75, color=gray]
   (0,0) .. controls +(20:2) and +(210:2) .. (9.06308, 4.22618)
         .. controls +(30:2) and +(-45:2) ..  (10.649, 7.45649)
				 .. controls +(135:2) and +(60:2) ..  (6.42788, 7.66044)
				 .. controls +(240:2) and +(50:2) ..  (0.0, 0.0);
\draw[->, line width=0.75, color=gray]
   (10.649, 7.45649) .. controls +(-45:0.05) and +(135:0.05) .. (10.649, 7.45649);

\draw[line width=1.0]
   (6.15638, 16.9145) .. controls +(250:1) and +(70:1) .. (4.7883, 13.1557) 
	                     .. controls +(250:1) and +(70:1) ..  (3.42021, 9.39692)
				               .. controls +(255:5) and +(70:6) ..  (0, 0);
\draw[->, line width=1.0]
   (4.7883, 13.1557) .. controls +(250:0.05) and +(70:0.05) .. (4.7883, 13.1557);

\draw[line width=0.75, color=gray]
   (8.15638, 14.9145) .. controls +(270:4) and +(130:5) .. (14.2873, 10.60364)
           .. controls +(-50:5) and +(5:4) ..  (10.0,0)
					 .. controls +(175:4) and +(2:4) ..  (0.0,0);
\draw[->, line width=0.75, color=gray]
    (14.2873, 10.60364) .. controls +(-50:0.05) and +(130:0.05) ..  (14.2873, 10.60364);

\draw[line width=1.0]
   (0,0) .. controls +(90:2) and +(290:3) .. (-0.871543, 9.96195)
         .. controls +(110:3) and +(45:4) ..  (-6.33925, 13.5946)
				 .. controls +(225:4) and +(120:3) ..  (-7.07105, 7.07108)
				 .. controls +(300:3) and +(135:2) ..  (0.0, 0.0);
\draw[->, line width=1.0]
   (-6.33925, 13.5946) .. controls +(225:0.05) and +(45:0.05) .. (-6.33925, 13.5946);

\draw[line width=0.75, color=gray]
   (0,0) .. controls +(95:2) and +(290:2) .. (-2.58818, 9.65926)
         .. controls +(110:2) and +(45:2) ..  (-5.49402, 11.782)
				 .. controls +(225:2) and +(120:2) ..  (-5.73575, 8.19153)
				 .. controls +(300:2) and +(130:2) ..  (0.0, 0.0);
\draw[->, line width=0.75, color=gray]
  (-5.49402, 11.782) .. controls +(225:0.05) and +(45:0.05) .. (-5.49402, 11.782);

\draw[line width=1.0]
   (0,0) .. controls +(155:2) and +(-20:3) .. (-8.66024, 5.00002)
         .. controls +(160:3) and +(70:2) ..  (-14.0954, 5.13034)
				 .. controls +(250:2) and +(160:3) ..  (-9.84807, 1.73651)
				 .. controls +(340:3) and +(175:2) ..  (0.0, 0.0);
\draw[->, line width=1.0]
   (-14.0954, 5.13034) .. controls +(250:0.05) and +(70:0.05) .. (-14.0954, 5.13034);

\draw[line width=0.75, color=gray]
   (0,0) .. controls +(160:2) and +(-20:3) .. (-9.06307, 4.2262)
         .. controls +(160:3) and +(70:1) ..  (-12.216, 4.44629)
				 .. controls +(250:1) and +(160:1) ..  (-9.65925, 2.58821)
				 .. controls +(340:3) and +(170:2) ..  (0.0, 0.0);
\draw[->, line width=0.75, color=gray]
   (-12.216, 4.44629) .. controls +(250:0.05) and +(70:0.05) .. (-12.216, 4.44629);

\draw[line width=0.75, color=gray]
   (12.8906, -10.64187)  .. controls +(225:7) and +(-25:8) .. (-3.60478, -15.7721)
    .. controls +(155:7) and +(350:8) .. (-14.2442, -8.04203)
           .. controls +(170:8) and +(190:4) ..  (-10.0,0.0)
					 .. controls +(10:4) and +(184:4) ..  (0.0,0.0);
\draw[->, line width=0.75, color=gray]
    (-3.60478, -15.7721) .. controls +(155:0.05) and +(-25:0.05) ..  (-3.60478, -15.7721);

\draw[line width=1.0]
   (0,0) .. controls +(185:2) and +(45:3) .. (-9.84808, -1.73645)
         .. controls +(225:3) and +(135:2) ..  (-13.2442, -7.04203)
				 .. controls +(315:2) and +(225:3) ..  (-7.07109, -7.07104)
				 .. controls +(45:3) and +(230:2) ..  (0.0, 0.0);
\draw[->,line width=1.0]
   (-13.2442, -7.04203) .. controls +(135:0.05) and +(315:0.05) .. (-13.2442, -7.04203);

\draw[line width=0.75, color=gray]
   (0,0) .. controls +(190:2) and +(45:1) .. (-9.39694, -3.42017)
         .. controls +(225:1) and +(135:2) ..  (-11.4783, -6.1031)
				 .. controls +(315:2) and +(225:1) ..  (-8.19154, -5.73574)
				 .. controls +(45:1) and +(225:2) ..  (0.0, 0.0);
\draw[->, line width=0.75, color=gray]
   (-11.4783, -6.1031) .. controls +(135:0.05) and +(315:0.05) .. (-11.4783, -6.1031);

\draw[line width=1.0]
   (0,0) .. controls +(245:2) and +(90:3) .. (-5.00003, -8.66024)
         .. controls +(270:3) and +(180:2) ..  (-2.60478, -14.7721)
				 .. controls +(0:2) and +(260:3) ..  (1.73644, -9.84808)
				 .. controls +(80:3) and +(290:2) ..  (0.0, 0.0);
\draw[->,line width=1.0]
   (-2.60478, -14.7721) .. controls +(180:0.05) and +(0:0.05) .. (-2.60478, -14.7721);

\draw[line width=0.75, color=gray]
   (0,0) .. controls +(245:2) and +(90:1) .. (-3.42024, -9.39691)
         .. controls +(270:1) and +(180:2) ..  (-2.25748, -12.8025)
				 .. controls +(0:2) and +(260:1) ..  (-0.0000398038, -10.)
				 .. controls +(80:1) and +(290:2) ..  (0.0, 0.0);
\draw[->, line width=0.75, color=gray]
   (-2.25748, -12.8025) .. controls +(180:0.05) and +(0:0.05) .. (-2.25748, -12.8025);

\draw[line width=1.0]
   (0,0) .. controls +(295:2) and +(130:1) .. (4.22614, -9.0631)
         .. controls +(310:1) and +(225:4) ..  (11.4906, -9.64187)
				 .. controls +(45:4) and +(-30:3) ..  (9.65925, -2.58824)
				 .. controls +(150:3) and +(290:2) ..  (0.0, 0.0);
\draw[->,line width=1.0]
   (11.4906, -9.64187) .. controls +(225:0.05) and +(45:0.05) .. (11.4906, -9.64187);

\draw[line width=0.75, color=gray]
   (0,0) .. controls +(300:2) and +(130:3) .. (6.42784, -7.66047)
         .. controls +(310:3) and +(225:2) ..  (9.95854, -8.35629)
				 .. controls +(45:2) and +(-30:3) ..  (8.66023, -5.00004)
				 .. controls +(150:3) and +(280:2) ..  (0.0, 0.0);
\draw[->, line width=0.75, color=gray]
   (9.95854, -8.35629) .. controls +(225:0.05) and +(45:0.05) .. (9.95854, -8.35629);

\fill[color=gray!110!black] (0,0) circle (.3);

\draw[dashed, line width=0.4] (0,0) circle [radius=10];

\fill (10.0, 0.0) circle (.2); \node[scale=0.4] at (10.5, 0.5) {$2$};
\fill (9.65926, 2.58819) circle (.2); \node[scale=0.4] at (10.2, 2.0) {$3$};
\fill (9.06308, 4.22618) circle (.2); \node[scale=0.4] at (9.7308, 3.8) {$4$};
\fill (6.42788, 7.66044) circle (.2); \node[scale=0.4] at (7.2788, 7.66044) {$5$};
\fill (5.73577, 8.19152) circle (.2); \node[scale=0.4] at (5.53577, 9.19152) {$6$};
\fill (3.42021, 9.39692) circle (.2); \node[scale=0.4] at (4.12021, 9.65692) {$7$};
\fill (-0.871543, 9.96195) circle (.2); \node[scale=0.4] at (-0.841543, 10.96195) {$8$};
\fill (-2.58818, 9.65926) circle (.2); \node[scale=0.4] at (-2.48818, 10.65926) {$9$};
\fill (-5.73575, 8.19153) circle (.2); \node[scale=0.4] at (-5.73575, 9.19153) {$10$};
\fill (-7.07105, 7.07108) circle (.2); \node[scale=0.4] at (-7.07105, 8.07108) {$11$};
\fill (-8.66024, 5.00002) circle (.2); \node[scale=0.4] at (-8.76024, 5.90002) {$12$};
\fill (-9.06307, 4.2262) circle (.2); \node[scale=0.4] at (-9.8307, 3.8262) {$13$};
\fill (-9.65925, 2.58821) circle (.2); \node[scale=0.4] at (-8.905925, 2.9) {$14$};
\fill (-9.84807, 1.73651) circle (.2); \node[scale=0.4] at (-10.584807, 0.93651) {$15$};
\fill (-10.0, 0.0) circle (.2); \node[scale=0.4] at (-10.7, -0.6) {$16$};

\fill (-9.84808, -1.73645) circle (.2); \node[scale=0.4] at (-10.84808, -1.73645) {$17$};
\fill (-9.39694, -3.42017) circle (.2); \node[scale=0.4] at (-10.39694, -3.42017) {$18$};
\fill (-8.19154, -5.73574) circle (.2); \node[scale=0.4] at (-9.19154, -5.73574) {$19$};
\fill (-7.07109, -7.07104) circle (.2); \node[scale=0.4] at (-8.07109, -7.07104) {$20$};

\fill (-5.00003, -8.66024) circle (.2); \node[scale=0.4] at (-6.00003, -9.16024) {$21$};
\fill (-3.42024, -9.39691) circle (.2); \node[scale=0.4] at (-4.22024, -9.99691) {$22$};
\fill (-0.0000398038, -10.) circle (.2); \node[scale=0.4] at (-0.7, -10.5) {$23$};
\fill (1.73644, -9.84808) circle (.2); \node[scale=0.4] at (2.33644, -10.84808) {$24$};

\fill (4.22614, -9.0631) circle (.2); \node[scale=0.4] at (4.22614, -10.0631) {$25$};
\fill (6.42784, -7.66047) circle (.2); \node[scale=0.4] at (6.42784, -8.96047) {$26$};
\fill (8.66023, -5.00004) circle (.2); \node[scale=0.4] at (8.76023, -6.40004) {$27$};
\fill (9.65925, -2.58824) circle (.2); \node[scale=0.4] at (9.85925, -3.58824) {$28$};

\fill (9.96194, -0.87161) circle (.2); \node[scale=0.4] at (10.56194, -0.50) {$1$};

\end{scope}

\end{tikzpicture}

\caption{Two non-equivalent phase portraits with compatible
separatrix skeleton (numbering indicating the compatibility
bijection). \label{compatiblenobasta}}
\end{figure}
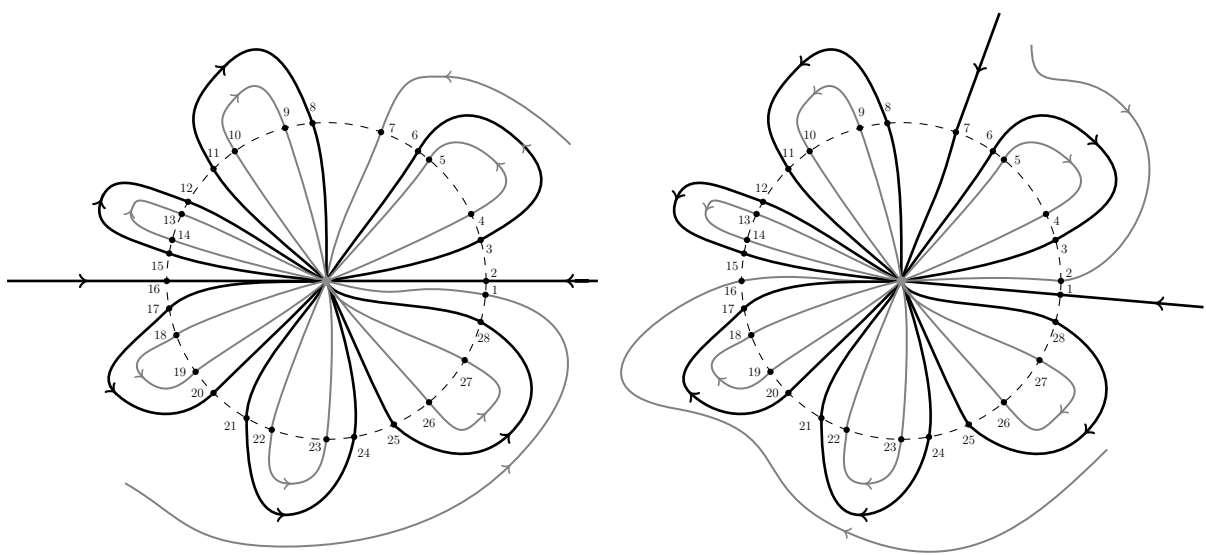

Although the lemmas in Section~\ref{general} do not require
finiteness of separatrices, no attempt has been done to find a
more general version of Theorem~\ref{principal} disposing of this
restriction. Anyway, we are mainly interested in polynomial
(local) flows, that is, those associated to polynomial vector
fields, hence finiteness of separatrices is guaranteed
(Remarks~\ref{casotipico} and \ref{FSDP}). Our next result,
together with Theorem~\ref{principal}, implies that if a flow has
a globally attracting singular point, then it is equivalent to a
polynomial flow.

%
%

\begin{maintheorem}
 \label{ejemploanalitico}
 Let $L$ be a feasible set. Then there are a
 polynomial flow $\Phi$ (having $\0$ as a non-positively stable
 global attractor) and a heteroclinic separatrix $\Sigma$ of $\Phi$
 such that $L$ is the canonical feasible set associated
 to $\Phi$, the counterclockwise orientation in $\RR$ and $\Sigma$.
\end{maintheorem}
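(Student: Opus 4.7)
The plan is to prove Theorem~\ref{ejemploanalitico} in two stages: \emph{(I)} given $L$, build a smooth flow $\Psi$ on $\RR$ whose canonical feasible set, with respect to the counterclockwise orientation and a chosen heteroclinic separatrix $\Sigma$, equals $L$; \emph{(II)} replace $\Psi$ by a topologically equivalent polynomial flow $\Phi$. Once stage \emph{(II)} is in hand, Theorem~\ref{principal}(i)$\Leftrightarrow$(iii) automatically gives $\Phi$ the canonical feasible set $L$, so that the two stages together imply Theorem~\ref{ejemploanalitico}.

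For stage \emph{(I)}, I would use the structural information carried by $L$ (as encoded in Definition~\ref{feasible}) to design the separatrix skeleton $\mathcal{X}$ of $\Psi$ directly. The feasible set prescribes the cyclic order at $\0$ of the elliptic, hyperbolic and attracting (parabolic) sectors together with the way heteroclinic and homoclinic separatrices are distributed among them; reading this off, I would first produce $\mathcal{X}$ as an embedded planar graph in $\SS$ with vertex set $\{\0,\infty\}$ and edges placed in the prescribed cyclic order at $\0$. A smooth flow realizing $\mathcal{X}$ is then assembled by inserting a standard local model for each sector type at $\0$, a repelling topological node at $\infty$ (cf.\ Remark~\ref{fundamental}), and filling each region complementary to $\mathcal{X}$ with orbits that flow from $\infty$ to $\0$, glued to the local sector models along the separatrices of $\mathcal{X}$. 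The distinguished heteroclinic separatrix $\Sigma$ is chosen so that the canonical labelling recovers exactly $L$; because $L$ is a feasible set, the constraints in Definition~\ref{feasible} guarantee that this geometric construction is consistent.

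For stage \emph{(II)}, I would prove a realization statement: any flow on $\SS$ with $\0$ a global attractor, $\infty$ a repelling topological node, and finitely many separatrices is topologically equivalent to a polynomial flow. Locally at $\0$, each of the three sector types admits explicit polynomial normal forms, so one can fabricate a polynomial vector field with the prescribed sectorial decomposition at $\0$ and with a radial polynomial repeller at $\infty$. The global pattern of heteroclinic connections can be matched by approximating a suitable auxiliary smooth realization by polynomial systems in the $C^1$ topology on $\SS$ and then invoking the compatibility criterion of Theorem~\ref{principal}(ii) to conclude that the approximant is still in the same topological equivalence class as $\Psi$.

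The main obstacle is stage \emph{(II)}. Local polynomial normal forms for each sector type at $\0$ are classical, but producing a \emph{single} polynomial vector field whose global separatrix skeleton agrees with an arbitrary prescribed finite graph on $\SS$ is genuinely delicate, since one must simultaneously control the local behaviour at $\0$, at $\infty$, and the combinatorics of every heteroclinic connection. This is precisely why the authors describe the construction as ``non-constructive'': the polynomial $\Phi$ is obtained from an existence/approximation argument rather than from an explicit formula for $P$ and $Q$ in terms of $L$.
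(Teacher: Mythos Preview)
Your two-stage plan matches the paper's overall architecture, and Stage~(I) is in the right spirit: the paper also first realizes $L$ by a non-polynomial flow, though it does so very concretely, by gluing explicit model vector fields $f_{s,j}$, $g^\pm_{r,s,j}$ on rectangles according to the combinatorics of $V$ and $L$, wrapping the resulting semi-plane picture around via an exponential map, and collapsing the set of singular points to $\0$ using Lemma~\ref{cociente}.

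The genuine gap is in Stage~(II). Your proposed mechanism---approximate the smooth realization by polynomials in the $C^1$ topology and then invoke Theorem~\ref{principal}(ii) to retain the equivalence class---does not work. The separatrix skeleton is \emph{not} $C^1$-stable: homoclinic loops and the particular nesting pattern of elliptic regions are non-generic configurations that are typically destroyed by arbitrarily small $C^1$ perturbations. A polynomial approximant will therefore generally have a different (usually simpler) skeleton, and Theorem~\ref{principal}(ii) gives you nothing to compare. For the same reason, matching local polynomial normal forms at $\0$ and a repelling node at $\infty$ is necessary but not sufficient; as Figure~\ref{FSDnobasta} already shows, the sectorial decomposition at $\0$ does not determine the global phase portrait.

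The paper sidesteps this entirely. Once the smooth flow $\Phi'$ on $\RR_\infty$ is built, it simply observes that $\Phi'$ has two singular points, no periodic orbits, and finitely many unstable orbits, and then invokes \cite[Lemma~4.1]{JP}, which is a corollary of the Schecter--Singer theorem \cite{SS} (together with Guti\'errez's smoothing \cite{Gu}). That theorem gives sufficient conditions for a flow on $\mathbb{S}^2$ to be topologically equivalent to a polynomial one, and $\Phi'$ meets them. This is the ``non-constructive'' step the authors refer to; it is a genuine black box and cannot be replaced by a naive density/perturbation argument.
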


Our proof of Theorem~\ref{ejemploanalitico} depends heavily on the
paper \cite{SS}, where sufficient conditions are given allowing
the associated flow to a $C^1$-vector field to be equivalent to a
polynomial flow. It is worth emphasizing that these conditions, as
explained in that paper, are not necessary: fortunately, the
partial result in \cite{SS} turns out to be enough for our
purposes. Still, this is not fully satisfying, because the
arguments in \cite{SS} are essentially non-constructive. In fact,
to the best of our knowledge, the literature provides no explicit
examples of polynomial flows having a non-trivial globally
attracting singular point. For this reason we finally prove:

\begin{maintheorem}
 \label{explicito}
 The origin is both a global attractor and an elliptic saddle
 for the system
 \begin{equation}
  \label{eqex}
  \begin{cases}
 x'=-((1+x^2)y + x^3)^5, \\
 y'=y^2(y^2+x^3).
 \end{cases}
 \end{equation}
\end{maintheorem}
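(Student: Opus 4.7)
The proof reduces to three claims: (a) $\0$ is the only equilibrium of \eqref{eqex}; (b) $\0$ has the sectorial decomposition of an elliptic saddle; and (c) every positive semi-orbit tends to $\0$.

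For (a), setting $y' = y^2(y^2+x^3) = 0$ forces $y=0$ or $y^2=-x^3$, and combining either case with $x'=0$ (i.e.\ $(1+x^2)y+x^3=0$) reduces by elementary algebra to $(x,y)=(0,0)$. In the first case $x'=-x^{15}$ gives $x=0$; in the second, eliminating $y$ from $y=-x^3/(1+x^2)$ and $y^2=-x^3$ yields $x^3\bigl(x^3+(1+x^2)^2\bigr)=0$, and $x^3+(1+x^2)^2=x^2(x^2+x+2)+1\geq 1$ on $\mathbb{R}$ (since the discriminant of $x^2+x+2$ is negative).

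The crucial step is (b). The key structural remark is that the $x$-axis is invariant (since $y=0$ forces $y'=0$) and on it $x'=-x^{15}$, so both half-axes are orbits converging to $\0$ in positive time and escaping to infinity in negative time; these are the natural candidates for the two separatrices delimiting the expected hyperbolic sector. To show that the rest of a small neighborhood of $\0$ consists of exactly one elliptic sector and this hyperbolic sector, I would perform a weighted blow-up of $\0$ adapted to the Newton diagram of \eqref{eqex}: the lowest-order parts are $x'\sim -y^5$ and $y'\sim y^4$, while the next relevant terms couple $y$ with $x^3$, which suggests blow-up weights matching the asymptotic balance $y\sim x^3$. After iterating directional blow-ups until only elementary singular points appear on the exceptional divisor, I would reassemble the sectorial structure at $\0$ by gluing the charts and verify that it is precisely one elliptic plus one hyperbolic sector.

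For (c), Poincar\'e-compactifying \eqref{eqex} and inspecting the highest-order homogeneous components shows that infinity is purely repelling (consistent with Remark~\ref{fundamental}), so every positive semi-orbit stays in a fixed compact set. The invariance of $\{y=0\}$ combined with the uniqueness of $\0$ rules out periodic orbits: any periodic orbit must enclose a singular point, hence $\0$, and therefore cross the invariant $x$-axis, a contradiction. By Poincar\'e--Bendixson, the $\omega$-limit set of every orbit is thus $\{\0\}$, giving global attraction.

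The main obstacle will be the blow-up analysis in step~(b). The strong degeneracy at $\0$---the vanishing of all low-order jets, together with the delicate interplay between $y$ and $x^3$ in the $x$-equation and the fourth-order leading part of the $y$-equation---is likely to require several successive (weighted) blow-ups, each producing auxiliary singular points whose local sectors must be matched carefully across charts in order to recover the claimed one-elliptic-plus-one-hyperbolic picture without losing or duplicating any separatrix.
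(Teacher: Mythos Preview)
Your strategy is the textbook one---desingularise $\0$ by weighted blow-ups for the local picture, Poincar\'e-compactify for behaviour at infinity---and it is precisely the route the paper explicitly declines. The authors remark at the outset of Section~\ref{example} that ``one could use, in principle, desingularization \ldots\ and the Poincar\'e compactification \ldots\ In the present case this leads, however, to very heavy calculations''. With $\deg P=15$ and $\deg Q=5$ the equator of the Poincar\'e sphere carries degenerate singular points (the leading term $-x^{10}y^5$ of $P$ vanishes along both axes), and at $\0$ the quasi-homogeneous filtration is deep; the iterated blow-ups you anticipate would indeed be long, and you have only sketched them.

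The paper's argument is completely elementary and, crucially, \emph{reverses the logical order of your steps (b) and (c)}: the elliptic-saddle structure is deduced from the global dynamics rather than from local blow-up. Concretely, the plane is cut into six regions $U_1,\ldots,U_6$ by the nullclines $y=0$, $y^2+x^3=0$, $(1+x^2)y+x^3=0$, with a definite sign pattern for $(x',y')$ in each (your observation that the $x$-axis is invariant is part of this). Global attraction (Claim~1) follows by tracking orbits through these regions; the only nontrivial point is that orbits in $U_1$ reach $y^2+x^3=0$, proved via the one-line estimate $-1\le Q/P\le 0$ on $U_1\cap\{y\ge 1\}$, so the orbit lies below a straight line that does meet that curve. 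Non-stability (Claim~2) is shown by an equally explicit estimate: along orbits starting on the positive $y$-axis one gets $Q/P\le -1/(4y)$ in the relevant strip, forcing the orbit to stay above the parabola $y^2=-x/2$ and hence to hit $y=-2x$ with $y>1/4$. The elliptic saddle (Claim~3) is then a one-paragraph consequence: by Claims~1--2 and Proposition~\ref{atractorestable} the radial region $R$ is a proper subset of $\RR\setminus\{\0\}$; the nullcline picture forbids incomparable homoclinic orbits, so $\Bd R$ consists of $\0$ and a single homoclinic separatrix, which by Remark~\ref{ejemplofacil} is exactly the elliptic-saddle configuration.

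So your plan is not wrong, but it trades two short elementary estimates for two formidable desingularisation computations, and it misses the paper's main idea: obtain the local sectorial decomposition from the global phase portrait rather than the other way round.
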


\section{Preliminary notions}
 \label{preliminary}

A number of standard topological notions will be of repeated use
in this paper. We say that a topological space is an \emph{arc}
(respectively, \emph{open arc}, \emph{circle}, \emph{disk}) if it
is homeomorphic to $[0,1]$ (respectively, $\R$, the unit circle
$\mathbb{S}^1=\{(x,y)\in \RR: x^2+y^2=1\}$, the unit disk
$\{(x,y)\in \RR: x^2+y^2\leq 1\}$). If $T$ is arc, and $h:[0,1]\to
T$ is a homeomorphism, then $h(0)$ and $h(1)$ are called the
\emph{endpoints} of $T$. A \emph{region} of a topological space
$X$ is an open, connected subset of $X$.

A \emph{local flow} on a metric space $(X,d)$ is a continuous map
$\Phi:\Lambda\subset \R\times X\rightarrow X$ satisfying:
\begin{itemize}
 \item $\Lambda$ is open in $\R\times X$; moreover, for any $z\in
 X$ the set of numbers $t$ for which $\Phi(t,z)$ is defined is an
 open interval $I_z=(a_z,b_z)$, with
 $-\infty\leq a_z<0<b_z\leq \infty$;
 \item $\Phi(0,z)=z$ for any $z\in X$;
 \item if $\Phi(t,z)=u$, then $I_u=\{s-t:s\in I_z\}$; moreover,
 $\Phi(r,u)=\Phi(r,\Phi(t,z))=\Phi(r+t,z)$ for every $r\in I_u$.
\end{itemize}
In the particular case $\Lambda=\R\times X$, we call $\Phi$ a
\emph{flow} on $X$. Observe that if $X$ is compact, then $I_z=\R$
for any $z\in X$, that is, any local flow on $X$ is a flow. We
write $\Phi_z(t)=\Phi_t(z)=\Phi(t,z)$ whenever it makes sense,
when observe that if $\Phi$ is a flow, then the map
$\Phi_t:X\rightarrow X$ is a homeomorphism for every $t$. We call
$\varphi_{\Phi}(z):=\Phi_{z}(I_z)$. Here (as for the subsequent
notions) we typically omit $\Phi$ in the subindex and write
$\varphi(z)$ instead. If $\varphi(z)=\{z\}$ (when $I_z=\R$), then
we call $z$ a \emph{singular point} of $\Phi$; otherwise the
orbit, and its points, are called \emph{regular}. Since orbits
foliate the space, that is, distinct orbits are disjoint, no point
can be regular and singular at the same time. When the orbit
$\varphi(z)$ is a circle (equivalently, the map $\Phi_z(t)$ is
periodic), it is called \emph{periodic}. If $I\subset I_z$ is an
interval, then we call $\Phi_{z}(I)$ a \emph{semi-orbit} of
$\varphi(z)$. In the particular cases $I=[a,b]$ (with
$\Phi_z(a)=p$, $\Phi_z(b)=q$) $I=[0,b_z)$ or $I=(a_z,0]$, we
rewrite $\Phi_z(I)$ as $\varphi(p,q)$, $\varphi(z,+)$ or
$\varphi(-,z)$,
respectively. We define the
\emph{$\omega$-limit set} of the orbit $\varphi(z)$ (or the point
$z$) as the set
 $$
 \omega(z)=\{u\in X:\exists t_n\to b_z; \Phi_z(t_n)\to u\}.
 $$
The \emph{$\alpha$-limit set} $\alpha(z)$  is analogously defined
(now $t_n\to a_z$).

We say that an orbit $\Gamma$  is \emph{positively} (respectively,
\emph{negatively}) \emph{stable} if for any  $p\in \Gamma$ and any
$\epsilon>0$ there is a number $\delta>0$ (depending of $p$ and
$\epsilon$) such that if $d(p,q)<\delta$, then all points from
$\varphi(q,+)$ (respectively, $\varphi(-,q)$) stay at a distance
less than $\epsilon$ from $\varphi(p,+)$ (respectively,
$\varphi(-,p)$).  We say that $\Gamma$ is \emph{stable} if it is
both positively and negatively stable, and we say that it is
\emph{unstable} if it is not stable. It is worth emphasizing that
these notions are not purely topological: they depend on the
metric $d$.

A set $\Omega\subset X$ is \emph{invariant} for $\Phi$ if it is
the union of some orbits of $\Phi$. If the restriction of $\Phi$
to $\Lambda\cap(\R\times \Omega)$ is a local flow on $\Omega$ (for
instance, if $\Omega$ is invariant), then we call it, more simply
if somewhat incorrectly, the \emph{restriction of $\Phi$ to
$\Omega$}.

Let $\Phi$ and $\Psi$ be respective local flows on the spaces $X$
and $Y$. We say that $\Phi$ and $\Psi$ are \emph{topologically
equivalent} if there is a homeomorphism $h:X \rightarrow Y$ such
that $h(\varphi_\Phi(z))=\varphi_\Psi(h(z))$ for every $z\in X$
which preserves the respective (time) directions  of $\Phi$ and
$\Psi$.

Local flows are associated, in a natural way, to (autonomous)
systems of differential equations defined on smooth manifolds $M$
(which will be seen here as embedded in $\R^m$ for some natural
number $m$). Namely, if $\Phi:\Lambda\subset \R\times M\to M$ is a
smooth local flow, then the vector field $f:M\to \R^m$ given by
$f(z)=\frac{\partial\Phi}{\partial t}(0,z)$ (the \emph{associated
vector field} to $\Phi$) is tangent to $M$ and satisfies
$\frac{\partial\Phi}{\partial t}(t,z)=f(\Phi(t,z))$, that is, the
solution of the system $u'=f(u)$ with initial condition $u(0)=z$
is the map $\Phi_{z}(t):=\Phi(t,z)$, $t\in I_{z}$. Conversely, if
a vector field $f:M\to \R^m$ is tangent to $M$ and sufficiently
smooth (locally Lipschitz is enough), and $\Phi_z(t)$ denotes the
solution of $u'=f(u)$ satisfying $u(0)=z$, then
$\Phi(t,z):=\Phi_z(t)$ is a local flow on $M$. While polynomial
vector fields are the primary interest of this paper, and their
associated flows are usually just local, there is a way to get rid
of this restriction. In fact, if $X$ is locally compact, $O\subset
X$ is open, and $\Phi$ is a local flow on $O$, then there is a
flow $\Psi$ on $X$ whose restriction to $O$ has the same orbits
and  directions as those of $\Phi$, and having singular points
outside $O$ \cite[Lemma~2.3]{JS}. To simplify the notation we will
call $\Phi$, rather than $\Psi$, this extended flow, hoping that
this will not lead to confusion. If $\Phi$ is associated to a
polynomial vector field, then we also call it (and its extension)
\emph{polynomial}, although of course this map is not
``polynomial'' in the usual sense.

In concrete, we are interested in the case $O=\RR$ and
$X=\RR_\infty= \RR\cup\{\infty\}$ (the one-point compactification
of $\RR$), when after identifying $\RR_\infty$ with  the Euclidean
unit sphere $\mathbb{S}^2\subset \R^3$ via the stereographic projection
$(u,v,w)\mapsto(x,y)$ given by $x=u/(1-w)$, $y=v/(1-w)$, we use in
$\RR_\infty$ (and then in $\RR$) the distance $d(\cdot,\cdot)$
inherited from the Euclidean distance in $\mathbb{S}^2$. Hence, the
topologies on $\RR$ and $\RR_\infty$ are the usual ones but
$d(z,z')\leq 2$ for any $z,z'\in \RR_\infty$. Unless otherwise
stated, topological properties of subsets of $\RR$ refer to the
topology in $\RR$. In particular, we mean $A\subset \RR$ to be
\emph{bounded} in the conventional sense, that is, when it is
contained in an Euclidean plane ball (while, of course, all sets
in $\RR$ are ``bounded'' regarding the distance $d(\cdot,\cdot)$).

Sphere and plane local flows have, as it is well known, some
particularly good dynamical properties. The reader is assumed to
be familiar with the basic facts of the Poincar\'e-Bendixson
theory; for instance, recall that if $z$ is regular, then there is
a transversal to $z$ for this flow. (If a local flow $\Phi$ can be
restricted to a neighbourhood $A$ of $z$ so that it is
topologically equivalent to that induced by the constant vector
field $f_0=(1,0)$ on the square $S=(-1,1)\times [-1,1]$, and the
arc $T\subset A$ is the image of the vertical arc $\{0\}\times
[-1,1]$ by the corresponding homeomorphism $h:S\to A$, with
$h(\0)=h(0,0)=z$, then $T$ is called a \emph{transversal} to $z$
for $\Phi$, or just a transversal to $\Phi$ ---or simply a
transversal--- when no emphasis on $z$ is required. If all subarcs
of an open arc or a circle $Q$ are transversal to $\Phi$, we
similarly say that $Q$ is \emph{transversal} to $\Phi$.)

There is a natural way to transport polynomial vector fields from
$\mathbb{S}^2$ to $\RR$. Namely, if $f:\mathbb{S}^2 \to \R^3$ is a polynomial vector
field, tangent to $\mathbb{S}^2$ and vanishing at the north pole $(0,0,1)$
of $\mathbb{S}^2$, say $f(u,v,w)=(P(u,v,w),Q(u,v,w),R(u,v,w))$, then we can
carry it, via the stereographic projection, to the plane vector
field
 $$
 g(x,y)=(1-w)^{-1}(P(u,v,w)+R(u,v,w)x, Q(u,v,w)+R(u,v,w)y)
 $$
with $u=2x/(1+x^2+y^2)$, $v=2y/(1+x^2+y^2)$,
$w=(x^2+y^2-1)/(1+x^2+y^2)$, and after multiplying $g$ by a
appropriate power of $1+x^2+y^2$ we obtain a polynomial vector
field whose associated (polynomial) flow is topologically
equivalent to the flow induced by $f$ on $\mathbb{S}^2\setminus
\{(0,0,1)\}$.

\subsection{On special flows and regions}
 \label{specialflows}

The standing assumption in this paper is that $\0$ is a globally
attracting singular point for the flows $\Phi$ on $\RR$ we deal
with, that is, $\omega(z)=\{\0\}$ for any $z\in \RR$. This is
closely related to the notions of heteroclinicity and
homoclinicity. We say that an orbit $\varphi(z)$ of $\Phi$ is
\emph{homoclinic} (respectively, \emph{heteroclinic}) if (besides
$\omega(z)=\{\0\}$) we have $\alpha(z)=\{\0\}$ (respectively,
$\alpha(z)=\emptyset$
---that is, $\alpha(z)=\{\infty\}$ when
using the extended flow to $\RR_\infty$). Of course, the singular
point $\0$ is trivially homoclinic. If $\Gamma$ is homoclinic,
then we denote by $E(\Gamma)$ the disk enclosed by the circle
$\Gamma\cup \{\0\}$ (or just the singleton $\{\0\}$ in the case
$\Gamma=\{\0\}$). Since $\0$ as a global attractor, any orbit of
$\Phi$ is either heteroclinic or homoclinic
(Lemma~\ref{lounoolootro}).

Let $f_i:\RR\rightarrow \RR$, $1\leq i\leq 4$, be the vector
fields $f_1(x,y)=(x,-y)$, $f_2(x,y)=(-x,-y)$, $f_3(x,y)=(x,y)$,
$f_4(x,y)= (x^2-2xy, xy-y^2)$ respectively. Also, let
$$
A_1=\{(x,y)\in\RR: 0\leq x,y<1, xy<1/2\},
$$
$$
A_2=A_3=A_4=\{(x,y)\in\RR: 0\leq x,y<1, x^2+y^2<1\}.
$$
We remark that although the sets $A_i$ are not open, $f_i$ still
induces a local flow  $\Phi_i$ on $A_i$, $1\leq i\leq 4$. See
Figure~\ref{sectores}. Assume now that $B$ is a set containing
$\0$ and $\Phi$ induces a local flow on $B$ which is topologically
equivalent to $\Phi_i$. Then we say that $B$ is a
\emph{hyperbolic}, \emph{attracting}, \emph{repelling} or
\emph{elliptic sector} of $\Phi$ (at $\0$) when, respectively,
$i=1,2,3,4$. The flow $\Phi$ is said to have the \emph{finite
sectorial decomposition property} (at $\0$) if either $\0$ is
positively stable or has a neighbourhood which is the (minimal)
union of at least two, but finitely many, hyperbolic, attracting,
repelling and elliptic sectors (since $\Phi$ admits no periodic
orbits, see also Proposition~\ref{atractorestable}, this amounts
to the standard definition to be found, for instance, in \cite[p.
18]{DLA}).

\begin{remark}
 \label{casotipico}
The typical case for this to happen is that $\Phi$ is associated
to a vector field (real) analytic at $\0$, see for instance
\cite[Chapter~3]{DLA}.
\end{remark}

\begin{figure}
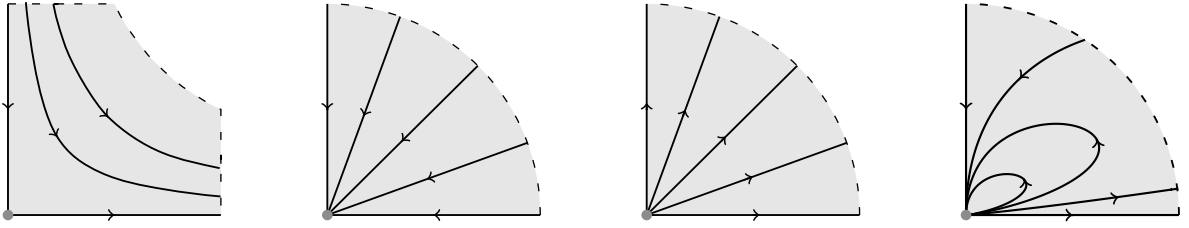

\centering

\caption{From left to right: a hyperbolic,
an attracting, a repelling and
an elliptic sector.} \label{sectores}
\end{figure}

We call a region $\Omega\subset \RR$  \emph{radial} (respectively,
a \emph{strip}) if it is invariant for $\Phi$, and, when
restricted to $\Omega$, $\Phi$ is topologically equivalent to the
flow induced by $f_2$ on $\RR\setminus \{\0\}$ (respectively, in
the upper half-plane $\mathbb{H}= \R\times (0,\infty)$). Needless
to say, to define strips, one can equivalently use (as it is
usually done) the associated flow to the constant vector field
$f_0$ on $\RR$. If all orbits of a strip $\Omega$ are heteroclinic
(respectively, homoclinic), then we call $\Omega$
\emph{heteroclinic} (respectively, \emph{homoclinic}) as well.
Observe that, in general, the interior of a hyperbolic, attracting
or repelling sector is not a strip because it is not invariant (it
does not consist of full orbits of $\Phi$). We say that the strip
$\Omega$ is \emph{strong} if there are orbits $\Gamma_1,\Gamma_2$
in $\Bd\Omega$ such that the restriction of $\Phi$ to $\Omega\cup
\Gamma_1\cup \Gamma_2$ is equivalent to that of the flow induced
by $f_2$ on $\Cl \mathbb{H}\setminus \{\0\}$. If, moreover,
$\Cl\Omega=\Omega\cup \Gamma_1\cup\Gamma_2\cup \{\0\}$, then we
say that $\Omega$ is \emph{solid}.

 \begin{remark}
If $\Omega$ is a solid strip, then either all $\Omega$, $\Gamma_1$
and $\Gamma_2$ are heteroclinic, or all of them are homoclinic.
Otherwise, as it is easy to check, either (a) one of orbits, say
$\Gamma_1$, is heteroclinic, $\Gamma_2$ is homoclinic and
$\Omega=\R^2\setminus (\Gamma_1\cup E(\Gamma_2))$, or (b) both
$\Gamma_1$ and $\Gamma_2$ are homoclinic, with $E(\Gamma_1)\cap
E(\Gamma_2)=\{\0\}$, and $\Omega=\R^2\setminus (E(\Gamma_1)\cup
E(\Gamma_2))$. Use Lemma~\ref{bounded} to find a heteroclinic
orbit $\Gamma\subset \Omega$. Clearly, $\Gamma$ cannot disconnect
$\Omega\cup \Gamma_1\cup \Gamma_2$, which contradicts that
$\Omega$ is strong.
 \end{remark}

If $Q$ is a transversal circle (respectively, open arc) with the
property that, for every $z\in Q$, $\varphi(z)$ intersects $Q$
exactly at $z$, then $\Omega=\bigcup_{z\in Q} \varphi(z)$ is
radial (respectively, a strip). To construct the corresponding
homeomorphism $h:\RR\setminus \{\0\}\to \Omega$ (respectively,
$h:\mathbb{H}\to \Omega$) just fix a homeomorphism
$f:\mathbb{S}^1\to Q$ (respectively, $f:\mathbb{S}^1\cap
\mathbb{H}\to Q$) and write
$h(e^{-t+\ii\theta})=\Phi(t,f(e^{\ii\theta}))$. Conversely, if
$\Omega\subset \RR$ is radial (respectively, a strip)  then there
is a circle (respectively, an open arc) $Q\subset \Omega$,
transversal to $\Phi$, having exactly one common point with every
orbit in $\Omega$. We call any such set $Q$ a \emph{complete
transversal} to $\Omega$.  If $\Omega$ is a strong strip, then
more is true: there is a transversal arc $T$ having exactly one
common point with every orbit in $\Omega$ and every orbit
$\Gamma_1,\Gamma_2$. We call $T$ a \emph{strong transversal} to
$\Omega$.

\begin{remark}
 \label{fundamental}
If $\Omega$ is radial, and the circle $C$ is a complete
transversal to $\Omega$, then it must enclose $\0$. Hence all
heteroclinic orbits intersect $C$, that is, $\Omega$ is the union
set of all heteroclinic orbits of $\Phi$; in other words, $\Phi$
admits one radial region at most (later we will see,
Proposition~\ref{atractorestable}, that such a region does exist).
Moreover, the circles $\Phi_t(C)$ tend uniformly to $\infty$ as
$t\to -\infty$. In fact, if $D_t\subset \RR_\infty$ is the disk
containing $\infty$ and having $\Phi_t(C)$ as its boundary, then
$D_t=\{\Phi_s(u):u\in C, s\leq t\}\cup\{\infty\}$. Since these
disks intersect exactly at $\infty$, we get $\diam(D_t)\to 0$ as
$t\to -\infty$, and the uniform convergence to $\infty$ follows.
As a corollary, all heteroclinic orbits are negatively stable.

Similarly, if $\Omega$ is a solid strip and  $T$ is a strong
transversal to $\Omega$, then $\Phi_t(T)$ tends uniformly to $\0$
as $t\to \infty$, and tend uniformly to $\0$ as $t\to -\infty$ in
the homoclinic case, and to $\infty$ in the heteroclinic case. In
particular, all orbits of a  solid strip are stable, and if it is
heteroclinic (respectively, homoclinic), then the flow induced by
$f_2$ on $\Cl \mathbb{H}$ (respectively, by $f_4$ on the union set
of $\0$ and all orbits intersecting the diagonal arc
$\{(x,x):1/2\leq x\leq 1\}$) is topologically equivalent to the
restriction of $\Phi$ to $\Cl\Omega$.
\end{remark}

If an orbit is not contained in any solid strip, then it is called
a \emph{separatrix} of $\Phi$. Note that the union set $X$ of all
separatrices of $\Phi$ is closed. The components of $\RR\setminus
X$ are called the \emph{canonical regions} of $\Phi$. A family of
orbits of $\Phi$ consisting of all its separatrices and exactly
one orbit from every canonical region is called a \emph{separatrix
skeleton} of $\Phi$. Observe that any regular separatrix can
belong to the boundary of, at most, two different canonical
regions. Therefore, if the number of separatrices is finite, so it
the number of canonical regions.

\begin{remark}
 \label{FSDP}
As indicated in Remark~\ref{fundamental}, any unstable orbit must
be a separatrix. If $\Phi$ has the finite sectorial decomposition
property, then $\Gamma$ is a separatrix if and only if it is
either the singular point, or includes a semi-orbit limiting a
hyperbolic sector. In particular, $\Phi$ has finitely many
separatrices and $\Gamma$ is a separatrix if and only if it is
unstable.
\end{remark}

The next result is a particular case of \cite[Theorems~5.2 and
7.1]{Ma}, see also \cite{Ne} and \cite{EJ}:

\begin{proposition}
 \label{canonical}
 Any canonical region of $\Phi$ is either radial or a strip.
\end{proposition}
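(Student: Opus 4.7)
Fix a canonical region $R$ and any orbit $\Gamma\subset R$. Since $\Gamma$ is not a separatrix, by definition it lies in some solid strip $\Omega_0$; its strong transversal (see Remark~\ref{fundamental}) provides, after possibly shrinking, a transversal arc $T_0\subset R$ through $\Gamma$ such that every orbit of $\Phi$ meeting $T_0$ does so at exactly one point. The core idea is to enlarge $T_0$ to a maximal transversal $T\subset R$ retaining this one-crossing property and then identify $R$ with the union of all orbits through $T$.

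Consider the family $\mathcal{T}$ of connected transversals $T'\subset R$ containing $T_0$ and met at most once by every orbit of $\Phi$. I would combine a Zorn-type argument with local continuation at the endpoints of $T'$ --- a boundary point of $T'$ inside $R$ sits on a non-separatrix orbit, hence inside some solid strip whose strong transversal can be spliced onto $T'$ --- to produce a maximal element $T\in\mathcal{T}$. Because every orbit satisfies $\omega(z)=\{\0\}$ and $\alpha(z)\subset\{\0,\infty\}$, a Poincar\'e--Bendixson analysis on the extended flow on $\RR_\infty\cong\mathbb{S}^2$ forces $T$ to be either an open arc or a topological circle. Set $\Omega:=\bigcup_{z\in T}\varphi(z)$; this is an open, invariant subset of $R$, and by the explicit parametrisation $h(e^{-t+\ii\theta})=\Phi(t,f(e^{\ii\theta}))$ described earlier for complete transversals, $\Omega$ is topologically equivalent to the radial model (if $T$ is a circle, which by Remark~\ref{fundamental} must then enclose $\0$) or to the strip model (if $T$ is an open arc).

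To finish, I would show $\Omega=R$ via connectedness of $R$, by proving that $\Omega$ is also closed in $R$. If $z\in R\cap\Bd\Omega$, then $\varphi(z)$ is non-separatrix, so it lies in some solid strip with its own strong transversal $T_z$ through $z$; orbits from $\Omega$ accumulating at $z$ must cross $T_z$ transversally, and gluing $T_z$ to $T$ along such a common orbit would yield a strictly larger element of $\mathcal{T}$, contradicting the maximality of $T$. \emph{The main obstacle} will be making the maximal-continuation and gluing steps rigorous while excluding pathologies: one must guarantee that the enlarged transversal is still met \emph{at most once} by every orbit. The absence of non-trivial periodic orbits (a consequence of $\omega(z)=\{\0\}$ everywhere) rules out naive recurrence, but excluding that some orbit leaves $T$, travels through a different solid strip, and returns to $T$ a second time requires a careful global argument on $\mathbb{S}^2$, using that all orbits drain uniformly to $\0$ and that solid strips in $R$ fit together coherently along their shared non-separatrix boundary behaviour.
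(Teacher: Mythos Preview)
The paper does not prove this proposition: it is stated as a particular case of \cite[Theorems~5.2 and 7.1]{Ma} (see also \cite{Ne} and \cite{EJ}), with no argument supplied beyond the citation. So your proposal is not competing against any proof in the paper itself but is rather a sketch of what a self-contained argument would look like.

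Your overall strategy --- build a maximal one-crossing transversal $T$ inside the canonical region $R$, argue that $T$ is an open arc or a circle, and show that the saturation $\Omega=\bigcup_{z\in T}\varphi(z)$ coincides with $R$ --- is the classical shape of such arguments and is essentially how Markus proceeds (his ``parallel regions'' are classified via a complete cross-section). You are right that the delicate point is preserving the one-crossing property under extension. However, your closure step has a gap as written: when $z\in R\cap\Bd\Omega$, the orbits of $\Omega$ accumulating on $\varphi(z)$ meet $T$ at \emph{interior} points of $T$, not at an end, so ``gluing $T_z$ to $T$ along such a common orbit'' does not obviously yield a longer connected transversal extending $T$ --- you get a branching object unless you can first argue that those interior points actually escape towards an end of $T$. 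That in turn already presupposes some structure of $\Omega$ that you are trying to establish. The cleaner route, and the one taken in the cited references, is to bypass Zorn and instead run a flow-box/holonomy argument along paths in $R$: show that the relation ``the orbits of $p$ and $q$ are joined by a transversal arc in $R$ meeting each orbit once'' is an equivalence relation whose classes are open, hence all of $R$ by connectedness. The global hypothesis $\omega(z)=\{\0\}$ rules out periodic orbits and nontrivial recurrence, which is exactly what makes this holonomy well defined and the transversal globally one-crossing.
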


\begin{remark}
 \label{h-h}
A strip (even a strong strip) needs not be either heteroclinic or
homoclinic. Nevertheless, if a canonical region is a strip, then
it must be either heteroclinic or homoclinic (because, in this
case, the set of its heteroclinic orbits and the set of its
homoclinic orbits are both open; hence, by connectedness, one of
them must be empty).
\end{remark}

\begin{theorem}
 \label{markus}
Assume that $\0$ is a global attractor for two flows $\Phi$ and
$\Phi'$ and let  $\mathcal{X}$ and $\mathcal{X'}$ denote some
separatrix skeletons for $\Phi$ and $\Phi'$. Then $\Phi$ and
$\Phi'$ are topologically equivalent if and only if there is a
homeomorphism from the plane onto itself mapping the orbits of
$\mathcal{X}$ onto the orbits of $\mathcal{X'}$ and preserving the
flows directions.
\end{theorem}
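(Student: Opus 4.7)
The plan is to prove both directions by reducing to a canonical-region-by-canonical-region extension argument, using Proposition~\ref{canonical} to standardize each such region. The ``only if'' implication should be routine: any topological equivalence $h$ preserves exactly those orbits characterized dynamically, and by Remark~\ref{FSDP} (together with Remark~\ref{fundamental}) separatrices are precisely the unstable orbits, so $h(\mathcal{X})$ is automatically a separatrix skeleton of $\Phi'$. Since separatrix skeletons differ only by the choice of one representative orbit per canonical region, I would compose $h$ with a self-equivalence of $\Phi'$ (constructed by flowing within each canonical region along its complete transversal) to move $h(\mathcal{X})$ onto $\mathcal{X'}$ exactly.

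The substance is the ``if'' direction. Given $h_0:\RR\to\RR$ mapping orbits of $\mathcal{X}$ bijectively onto those of $\mathcal{X'}$ and preserving time direction on each, let $X$, $X'$ be the closed sets obtained as the unions of all separatrices. Since $h_0$ sends $X$ onto $X'$, it bijects the canonical regions of $\Phi$ with those of $\Phi'$; fix a pair $(\Omega,\Omega')$ with $h_0(\Omega)=\Omega'$ and let $\Gamma\subset\Omega$, $\Gamma'\subset\Omega'$ be the distinguished skeleton orbits, so $h_0(\Gamma)=\Gamma'$. By Proposition~\ref{canonical}, $\Omega$ is radial or a strip, and $\Omega'$ is of the same type (the type is detected by the topology of the orbit space). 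Choose a complete transversal $Q$ to $\Omega$ through a point $p\in\Gamma$, and $Q'$ to $\Omega'$ through $p':=h_0(p)\in\Gamma'$. Every $z\in\Omega$ then has a unique decomposition $z=\Phi_t(u)$ with $u\in Q$, $t\in\R$, so for a suitable homeomorphism $f:Q\to Q'$ with $f(p)=p'$ I would define
\[
 h(z) \;=\; \Phi'_{t}\bigl(f(u)\bigr).
\]
Any such $h$ is automatically an orbit- and direction-preserving homeomorphism $\Omega\to\Omega'$, and agrees with $h_0$ on $\Gamma$; the problem reduces to choosing $f$ so that the collection of locally defined extensions glues continuously with $h_0$ on $X$.

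To achieve that, I would not pick $Q$ arbitrarily but parametrize it by the ``boundary asymptotics'' of its orbits: each $u\in Q$ determines semi-orbits $\varphi(u,+)$ and $\varphi(-,u)$ accumulating on specific separatrices of $\Bd\Omega$ in a combinatorially controlled fashion, as described by the sectorial local models $f_1,\dots,f_4$. Since $h_0$ already fixes these boundary separatrices as oriented orbits, its action on $\Bd\Omega$ forces a uniquely consistent correspondence between $Q$ and $Q'$ up to the condition $f(p)=p'$. The uniform convergence of transversal translates to $\0$ and to $\infty$ (Remark~\ref{fundamental}) then gives continuity of $h$ at $\0$, at $\infty$, and along each boundary separatrix, so that the region-by-region extensions assemble into a global topological equivalence.

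The main obstacle I anticipate is the boundary-asymptotic bookkeeping when $\Bd\Omega$ contains several separatrices accumulating at $\0$, as happens for canonical regions sitting inside an elliptic sector, where orbits in $\Omega$ may approach the same boundary separatrix from both sides. Dealing with this requires a careful case analysis — radial versus strip, and, for strips, heteroclinic versus homoclinic boundary orbits — using Remark~\ref{h-h} and the fact that canonical strips are of a single type. Once the bijection $Q\to Q'$ is pinned down in each case, the formula above yields $h$ on each canonical region, and the verification that these pieces agree with $h_0$ on $X$ (and with each other along shared separatrices) completes the proof.
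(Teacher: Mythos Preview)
The paper does not prove Theorem~\ref{markus}: it is stated as a known result due to Markus~\cite{Ma} and Neumann~\cite{Ne}, with a reference to the authors' own preprint~\cite{EJ} for a corrected version (see Remark~\ref{problema}). So there is nothing in the paper to compare your argument against directly; what you have sketched is essentially the classical Markus--Neumann extension-by-canonical-region strategy.

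Two remarks on your outline. First, in the ``only if'' direction you invoke Remark~\ref{FSDP} to identify separatrices with unstable orbits, but that equivalence is only asserted under the finite sectorial decomposition property, which Theorem~\ref{markus} does not assume. This detour is unnecessary anyway: the separatrix notion here is defined purely in terms of solid strips, so it is automatically preserved by any topological equivalence. Second, and more substantively, the gluing of the per-region maps along $\Bd\Omega$ is exactly where the argument lives, and you have only flagged it as an ``obstacle,'' not carried it out. In particular, Remark~\ref{problema} warns that a homoclinic strip whose boundary consists of $\0$ and two homoclinic orbits need \emph{not} be solid unless the orbits in its closure are totally ordered by ``$\prec$''; this is precisely the failure mode that makes the classical references wrong as stated and that~\cite{EJ} is written to repair. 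Your appeal to ``boundary asymptotics'' and the local sector models is the right instinct, but as it stands the proposal stops at the point where the real work begins.
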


\begin{remark}
 \label{problema}
Our definition of separatrix is not the standard one (compare to
\cite{Ma}, \cite{Ne}, \cite[p. 294]{Pe} or \cite[p. 34]{DLA}),
even when we restrict ourselves, as it is the case here
(Lemma~\ref{lounoolootro}), to flows having only heteroclinic or
homoclinic orbits. More precisely, our ``separatrices'' are what
we called ``separators'' in \cite{EJ} (and our  ``canonical
regions'' what we called ``standard regions'' there).  If the
boundary of a heteroclinic strip consists of the singular point
and two heteroclinic orbits, then it is solid (the corresponding
strong transversal can be found with the help of
Lemma~\ref{transversal}). If we replace ``heteroclinic'' by
``homoclinic'', this needs not happen unless we additionally
assume that the ordering ``$\prec$'' we introduce below
Lemma~\ref{bounded} totally orders the orbits of the closure of
the strip. This point is missed in the above-mentioned references
and, as a consequence, Theorem~\ref{markus}, as stated there, does
not work, see \cite{EJ} for the details. Surprisingly, it seems
that this fact has passed unnoticed until now.
\end{remark}

\subsection{On orientations and the extension of homeomorphisms}
 \label{extensiones}

Let $C$ be a circle around $\0$. If $\Gamma$ is heteroclinic, we
call the last point of $\Gamma$ in $C$ (that is, the point $q\in
\Gamma\cap C$ such that $\Phi_q(t)\notin C$ for any $t>0$) the
\emph{$\omega$-point} of $\Gamma$ in $C$. Likewise, if $\Gamma$ is
regular and homoclinic and $C$ is small enough so that there are
points of $\Gamma$ not enclosed by $C$, then we call the first and
 last points of $\Gamma$ in $C$ (that is, the points $p,q\in
\Gamma\cap C$ such that $\Phi_p(t)\notin C$ for any $t<0$ and
$\Phi_q(t)\notin C$ for any $t>0$) the \emph{$\alpha$-point} and
the \emph{$\omega$-point} of $\Gamma$ in $C$, respectively.

If $\mathcal{P}$ is a finite family of orbits of $\Phi$, and $C$
is a circle around $\0$ small enough, then we denote by
$\Delta_\Phi(\mathcal{P},C)$ the set of all $\alpha$- and
$\omega$-points in $C$ from the orbits in $\mathcal{P}$ and call
it the \emph{configuration} of $\mathcal{P}$ in $C$. Note that the
possibility that the singular point belongs to $\mathcal{P}$ is
not excluded, when of course it adds no points to
$\Delta_\Phi(\mathcal{P},C)$. Also, observe that all
configurations of $\mathcal{P}$ are essentially the same, that is,
if $C$ and $C'$ are small circles around $\0$, then there is an
orientation preserving homeomorphism $h:C\to C'$ mapping the
$\alpha$- and $\omega$-points in $C$ of every orbit $\Gamma\in
\mathcal{P}$ to the $\alpha$- and $\omega$-points in $C'$ of that
same orbit $\Gamma$.

We call  a triplet $(A,B,C)$ of arcs in $\RR_\infty$ sharing a
common endpoint $p$ (and no other point) a \emph{triod}. The point
$p$ is called the \emph{vertex} of the triod, the other endpoints
of the arcs $A,B,C$ being called its \emph{endpoints.} We say that
the triod $(A,B,C)$  is \emph{positive}, when, after taking an
open euclidean ball $U$ of center $p$ and radius $\epsilon>0$
small enough, there is $\theta_0\in \mathbb{R}$ such that the
first intersection points of these arcs with $\Bd U$ can be
written as $p+\epsilon e^{\ii\theta_A}, p+\epsilon
e^{\ii\theta_B}, p+\epsilon e^{\ii\theta_C}$, with
$\theta_0=\theta_A<\theta_B<\theta_C<\theta_0+2\pi$.  We say that
the triod is \emph{negative} when it is not positive. Observe that
the definition above excludes the case when the common endpoint
$p$ is $\infty$. We then say that $(A,B,C)$ is positive when
$(G(A),G(B),G(C))$ is negative, $G:\RR_\infty\to \RR_\infty$ being
defined by $G(z)=1/\overline{z}$ (here we identify $\RR$ with
$\mathbb{C}$ and mean $G(\infty)=\0$, $G(\0)=\infty$). If $C$ is a
circle around $\0$ and $(q,q',q'')$ is a triplet of distinct
points in $C$, then we call it \emph{positive} or \emph{negative}
according to whether it is counterclockwise or clockwise oriented
in $C$, that is, there is a positive (negative) triod $(A,A',A'')$
in the disk enclosed by $C$ with vertex $\0$ and endpoints
$q,q',q''$. If $\Gamma$ is homoclinic, then we say that it is
\emph{positive} (respectively, \emph{negative}) when, after taking
$\Gamma'\subset \Inte E(\Gamma)$ and a small circle $C$ around
$\0$, the $\alpha$- and $\omega$-points $p,q$ of $\Gamma$ in $C$,
and the $\omega$-point $q'$ of $\Gamma'$ in $C$, we get that
$(p,q',q)$ is positive (respectively, negative). In simpler words,
$\Gamma$ is positive (negative) when the flow induces the
counterclockwise (clockwise) orientation on $\Gamma\cup \{\0\}$.

Let $P,P'\subset \RR$ (respectively, $P,P'\subset\RR_\infty$). We
say that $P$ and $P'$ are \emph{$\RR$-compatible} (respectively,
\emph{$\RR_\infty$-compatible}) if there is a homeomorphism $H$
from $\RR$ (respectively, $\RR_\infty$) onto itself  mapping $P$
onto $P'$. Clearly, $\RR$-homeomorphisms amount to
$\RR_\infty$-homeomorphisms mapping $\infty$ to itself. If
$H:\RR_\infty\to \RR_\infty$ is a homeomorphism, then, as it is
well known, either it preserves the orientation, that is, all
pairs of triods $(A,B,C)$ and $(H(A),H(B),H(C))$ have the same
sign, or it reverses the orientation, that is, all pairs of triods
$(A,B,C)$ and $(H(A),H(B),H(C))$ have opposite sign. As it turns
out, see \cite{AM}, this is the key property to identify
compatibility: two Peano sets $P$ and $P'$ (by a \emph{Peano
space} we mean a compact, connected, locally connected set) in
$\RR_\infty$ are $\RR_\infty$-compatible if and only if there is a
homeomorphism $h:P\to P'$ either preserving or reversing the
orientation, in the former sense, for all pair of triods $(A,B,C)$
and $(h(A),h(B),h(C))$ in $P$ and $P'$ (when $h$ can indeed be
homeomorphically extended to the whole $\RR_\infty$).

The former result can be adapted to the $\RR$-setting as follows.
We say that $P\subset \RR$ is \emph{nice} if it is unbounded,
$P_\infty=P\cup \{\infty\}$ is a Peano subset of $\RR_\infty$, and
for any triod $(A,B,C)$ in $P_\infty$ with vertex $\infty$ there
is a $\theta$-curve in $P_\infty$ including $A$, $B$ and $C$ (by a
\emph{$\theta$-curve} we mean a union of three arcs intersecting
exactly at their endpoints). Then we get: two nice sets $P,P'$ are
$\RR$-compatible if and only if there is a homeomorphism $h:P\to
P'$ either preserving or reversing the orientation for all pair of
triods $(A,B,C)$ and $(h(A),h(B),h(C))$ in $P$ and $P'$ (when,
again, $h$ can indeed be homeomorphically extended to the whole
$\RR$).

Assume that $\mathcal{P}$ and $\mathcal{P}'$ are finite families
of orbits of, respectively, $\Phi$ and $\Phi'$ (we also assume
that both of them contain the globally attracting singular point
$\0$ and at least one heteroclinic and one homoclinic orbit). Let
$P$ and $P'$ be the union sets of these orbits and note that these
sets are nice. Then, as it is simple to check, a condition
characterizing the $\RR$-compatibility of $P$ and $P'$ (when we
accordingly say that $\mathcal{P}$ and $\mathcal{P}'$ are
\emph{compatible}) is the existence of a \emph{compatibility
bijection}. By this we mean a bijection $\xi:\mathcal{P}\to
\mathcal{P}'$ for which there is a homeomorphism $\mu:C\to C'$,
with $C$ and $C'$ small circles around $\0$, mapping
$\Delta_\Phi(\mathcal{P},C)$ onto
$\Delta_{\Phi'}(\mathcal{P'},C')$, so that $\mu(C\cap
\Gamma)=C'\cap\xi(\Gamma)$ for any $\Gamma\in \mathcal{P}$. In
this case we say that $\mu$ \emph{preserves orbits for $\xi$}.

If, additionally, $\mu$ maps $\omega$-points onto $\omega$-points
(when we say that $\mu$ \emph{preserves directions for $\xi$}),
then the corresponding plane homeomorphism preserves the flows
directions on $\mathcal{P}$ and $\mathcal{P}'$. If, moreover,
these families are the separatrix skeletons of $\Phi$ and $\Phi'$,
Theorem~\ref{markus} implies that the flows are equivalent.

\subsection{A lemma on Janiszewski spaces}

A compact connected Hausdorff space is called a \emph{continuum}.
We say that a topological space $X$ is a \emph{Janiszewski space}
it is a locally connected continuum and, moreover, for any
subcontinua $C_1,C_2\subset X$ with the property that $C_1\cap
C_2$ is not connected, there are points $x,y\in X\setminus
(C_1\cup C_2)$ which are simultaneously contained in no
subcontinuum in $X\setminus (C_1\cup C_2)$. By \cite[Fundamental
Theorem~6, p. 531]{Ku}, a topological space $X$ is homeomorphic
to $\RR_\infty$ if and only
if it is a Janiszewski space, contains more than one point, and,
for any $x\in X$, the set $X\setminus \{x\}$ is connected. If $X$
is a Janiszewski space, $Y$ is Hausdorff and there is a continuous
monotone map mapping $X$ onto $Y$, then $Y$ is Janiszewski as well
(we say that $f:X\to Y$ is \emph{monotone} if $f^{-1}(A)$ is
connected whenever $A\subset Y$ is connected). In fact, this is
proved in \cite[Theorem~9, p. 507]{Ku} additionally assuming that
$Y$ is a locally connected continuum; but if $X$ is a locally
connected continuum, $Y$ is Hausdorff, and $X$ can be continuously
mapped onto $Y$, then $Y$ is indeed a locally connected continuum,
as seen in \cite[Theorem~9, p. 259]{Ku}.

Let $K\subset \RR_\infty$ be a continuum such that
$\RR_\infty\setminus K$ is connected. We define the equivalence
relation ``$\sim_K$'' in $\RR_\infty$ by $x\sim_K y$ if either
$x=y$ or both $x$ and $y$ belong to $K$. Then we have:

\begin{lemma}
 \label{cociente}
 The quotient space $\mathcal{Q}=\RR_\infty/\sim_K$ is homeomorphic to
 $\RR_\infty$.
\end{lemma}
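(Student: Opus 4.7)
The strategy is to verify, for $\mathcal{Q}$, the three properties characterizing $\RR_\infty$ up to homeomorphism that were quoted from \cite{Ku} just above the statement: being a Janiszewski space, containing more than one point, and having $\mathcal{Q}\setminus\{[x]\}$ connected for every $[x]\in\mathcal{Q}$. Throughout, $q:\RR_\infty\to\mathcal{Q}$ denotes the quotient map; I will implicitly assume $K\ne\RR_\infty$, as otherwise the statement is vacuous.

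The first step is to observe that $\mathcal{Q}$ is Hausdorff. The equivalence relation $\sim_K$ equals $\{(x,x):x\in\RR_\infty\}\cup(K\times K)$, which is closed in $\RR_\infty\times\RR_\infty$ because $K$ is closed. A quotient of a compact Hausdorff space by a closed equivalence relation is Hausdorff, so $\mathcal{Q}$ is Hausdorff; since $\RR_\infty$ is compact, $q$ is automatically a closed continuous surjection. I would then apply the quoted theorem from \cite[Theorem~9, p.~507]{Ku}: $\RR_\infty$ is Janiszewski and $\mathcal{Q}$ is Hausdorff, so it suffices to check that $q$ is monotone in the sense given in the excerpt, i.e., $q^{-1}(A)$ is connected whenever $A\subset\mathcal{Q}$ is connected. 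The fibres of $q$ are connected (each is either a single point outside $K$ or equals $K$, which is a continuum). Thus, if $A$ is connected but $q^{-1}(A)=F_1\sqcup F_2$ with $F_1,F_2$ nonempty and relatively closed in $q^{-1}(A)$, each fibre lies entirely in one piece, so the decomposition descends to a partition $A=q(F_1)\sqcup q(F_2)$. Because $q$ is a closed map, $q(F_1)$ and $q(F_2)$ are closed in $A$, contradicting its connectedness. This establishes monotonicity and therefore the Janiszewski property for $\mathcal{Q}$.

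To finish, I verify the remaining two conditions. Since $\RR_\infty\setminus K$ is nonempty, $\mathcal{Q}$ contains at least two points: the class $[K]$ and any class $\{x\}$ with $x\notin K$. For the connectedness of $\mathcal{Q}\setminus\{[x]\}$, I distinguish two cases. If $[x]=[K]$, the restriction of $q$ to $\RR_\infty\setminus K$ is a continuous bijection onto $\mathcal{Q}\setminus\{[K]\}$, and in fact a homeomorphism (the inverse is the composition $\mathcal{Q}\setminus\{[K]\}\hookrightarrow\mathcal{Q}$ pulled back via $q|_{\RR_\infty\setminus K}$, which is continuous because $q|_{\RR_\infty\setminus K}$ is an open map onto its image: the saturation of any open set $U\subset\RR_\infty\setminus K$ is $U$ itself). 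Hence $\mathcal{Q}\setminus\{[K]\}$ is connected by hypothesis. If instead $[x]=\{x_0\}$ for some $x_0\notin K$, then $\mathcal{Q}\setminus\{[x]\}=q(\RR_\infty\setminus\{x_0\})$ is the continuous image of a set homeomorphic to $\R^2$, hence connected. Assembling the three properties, the characterization quoted from \cite{Ku} yields $\mathcal{Q}\cong\RR_\infty$.

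The potentially delicate step is the monotonicity of $q$: the definition of monotone map adopted in the excerpt is the strong one (preimages of connected sets are connected), rather than merely having connected fibres. This is why the compactness of $\RR_\infty$, and the consequent closedness of $q$, are needed to push the decomposition down to $A$.
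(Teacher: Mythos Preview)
Your proof is correct and follows essentially the same route as the paper: both verify the Kuratowski characterization of $\RR_\infty$ by checking that $\mathcal{Q}$ is Hausdorff, that the quotient map is monotone (hence $\mathcal{Q}$ is Janiszewski), and that removing any point leaves a connected space. The only difference is that where the paper invokes \cite[Theorem~9, p.~131]{Ku} to pass from ``closed map with connected fibres'' to ``monotone'', you reprove that implication by hand via the partition argument; your justification that $q(F_1),q(F_2)$ are closed in $A$ is correct because the restriction of a closed map to a full preimage $q^{-1}(A)\to A$ is again closed.
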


\begin{proof}
 According to the previous discussion, if
 $\Pi:\RR_\infty\to \mathcal{Q}$ is the projection map (when recall that
 $\mathcal{U}$ is open in $\mathcal{Q}$ if and only if
 $\Pi^{-1}(\mathcal{U})$ is open in $\RR_\infty$), then, in order
 to prove that $\mathcal{Q}$ is homeomorphic to $\RR_\infty$, we just
 need to show:
 \begin{itemize}
  \item[(i)] $\mathcal{Q}$ is Hausdorff;
  \item[(ii)] $\mathcal{Q}\setminus \{X\}$ is connected for any $X\in \mathcal{Q}$.
  \item[(iii)] $\Pi^{-1}(\mathcal{C})$ is connected for any
  connected set  $\mathcal{C}\subset \mathcal{Q}$.
 \end{itemize}
 Statements (i) and (ii) are immediate because of the assumptions on $K$. To prove
 (iii) we use that $\Pi$ is a closed map by (i) and then apply
 \cite[Theorem~9, p. 131]{Ku} and the fact that any $X\in \mathcal{Q}$
 is a connected subset of $\RR_\infty$.
\end{proof}

\section{General results on global attraction}
 \label{general}

Recall that we assume that $\0$ is a global attractor for $\Phi$.

\begin{lemma}
 \label{lounoolootro}
 All orbits of $\Phi$ are either homoclinic or heteroclinic.
\end{lemma}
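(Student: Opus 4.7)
The plan is to classify $\alpha(z) \subset \RR_\infty$ via the Poincaré--Bendixson theorem and then rule out the non-singleton cases. First I would extend $\Phi$ to a flow on $\RR_\infty \cong \mathbb{S}^2$ as described in Section~\ref{preliminary}, observing that the only singular points are $\0$ and $\infty$: any additional singular point $p \in \RR \setminus \{\0\}$ would satisfy $\omega(p) = \{p\} \neq \{\0\}$, contradicting global attraction. Next I would rule out periodic orbits: a periodic orbit $\gamma \subset \RR$ would have $\omega(p) = \gamma$ for each $p \in \gamma$, again contradicting $\omega(p)=\{\0\}$, and $\gamma$ cannot pass through $\infty$.

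For any regular $z \in \RR$, the set $\alpha(z) \subset \RR_\infty$ is nonempty, compact, connected, and invariant, so by Poincaré--Bendixson (and the absence of periodic orbits) it is either a singular point or a polycycle consisting of singular points and connecting regular orbits. In the first case $\alpha(z) \in \{\{\0\}, \{\infty\}\}$ and $\varphi(z)$ is homoclinic or heteroclinic, as required. To rule out the polycycle case, I would assume $\alpha(z)$ contains a regular orbit $\varphi(w)$ with $w \in \RR$, and produce a continuous Lyapunov function $V:\RR \to [0, \infty)$ with $V(\0)=0$, $V > 0$ elsewhere, and $V(\Phi_t(x))$ strictly decreasing in $t$ for $x \neq \0$. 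Such $V$ exists for any continuous plane flow with a global point attractor by classical Lyapunov-function constructions (in the spirit of Conley, Auslander--Seibert, or Ura). Since $V \circ \Phi(\cdot, z)$ is strictly monotone, the limit $V_* := \lim_{t \to -\infty} V(\Phi_t(z)) \in (0, +\infty]$ exists; by continuity of $V$ and the definition of $\alpha$-limit, every $u \in \alpha(z) \cap \RR$ satisfies $V(u) = V_*$. In particular $V$ is constant on $\varphi(w)$, contradicting the strict monotonicity of $V$ along $\varphi(w)$. Hence $\alpha(z) \cap \RR \subset \{\0\}$, and by connectedness $\alpha(z)$ equals $\{\0\}$ or $\{\infty\}$.

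The main obstacle will be justifying the existence of the Lyapunov function $V$, since Theorem~\ref{principal} explicitly allows the attractor to be non-positively stable. For positively (Lyapunov-)stable attractors this is standard, but in the purely topologically-attracting case the construction is more delicate and requires invoking general results on attractors of continuous flows. A purely topological alternative, closer in spirit to the Poincaré--Bendixson style of Section~\ref{preliminary}, would avoid $V$: assuming $w \in \alpha(z) \cap \RR$ is regular, take a transversal $T$ through $w$, use the standard monotonicity of successive crossings $\Phi_{t_n}(z) \in T$ as $t_n \to -\infty$, and exploit the Jordan curves built from orbit arcs between consecutive crossings and the corresponding subarcs of $T$ to force a contradiction with the global convergence to $\0$. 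This second route seems technically heavier but uses only tools already developed in the paper.
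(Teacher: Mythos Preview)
Your primary (Lyapunov) approach has a genuine gap that cannot be repaired. A continuous $V:\RR\to[0,\infty)$ with $V(\0)=0$, $V>0$ elsewhere, and $V$ strictly decreasing along every regular orbit would force $\0$ to be positively stable: for any bounded neighbourhood $U$ of $\0$, set $m=\min_{\partial U}V>0$ and let $W$ be the component of $\{V<m\}$ containing $\0$; then $W\subset U$ is open and positively invariant. More directly, if $\Gamma$ is a regular homoclinic orbit (and the whole point of the paper is that such orbits exist in the non-positively-stable case), then $V$ is strictly decreasing along $\Gamma$ while both endpoints of $\Gamma$ accumulate on $\0$, so continuity of $V$ at $\0$ yields $V(\0)>V(\0)$. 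The Conley/Auslander--Seibert constructions you invoke only produce functions strictly decreasing off the chain-recurrent set, which here contains every homoclinic orbit; they cannot give the $V$ your argument needs.

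Your secondary ``transversal and Jordan curve'' route is precisely what the paper does, and far from being technically heavier it is a five-line argument: assuming $\alpha(z)$ contains a regular point $u$, take a transversal $T$ at $u$, pick consecutive backward crossings $p,q\in\varphi(z)\cap T$, and let $D\subset\RR_\infty$ be the disk bounded by the Jordan curve $\varphi(p,q)\cup S$ (with $S\subset T$ the subarc from $p$ to $q$) that contains $\varphi(-,p)$ and hence $\alpha(z)$. Then $\0\notin D$ because $\varphi(q,+)$ meets $D$ only at $q$ yet converges to $\0$; but $u\in\alpha(z)$ gives $\0=\omega(u)\subset\alpha(z)\subset D$, a contradiction. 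You should simply run this argument instead.
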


\begin{proof}
 If the statement of the lemma is not true, then
there is some
 point $z \in \R^2$ such that $\alpha(z)$
contains a regular point $u$. Let $T$ be a transversal to $u$.
According to some well-known Poincar\'{e}-Bendixson
 theory, we can find $p,q\in \varphi(z)\cap T$ so that
 $\varphi(p,q)\cup S$ (where $S$ is the arc in $T$ whose endpoints are
 $p$ and $q$) is a circle enclosing a disk $D$ in $\RR_\infty$ which
 contains $\varphi(-,p)$, and hence $\alpha(z)$, and intersects
 $\varphi(q,+)$ just at $q$. This is impossible: on the one hand,
 $\0$ cannot belong to $D$, because it is the $\omega$-limit set
 of $\varphi(q)$; on the other hand, $u\in\alpha(z)$
 implies $\omega(u)\subset \alpha(z)$, so $\0$ does belong to
 $D$.
\end{proof}

\begin{lemma}
 \label{bounded}
 The union set of
 all homoclinic orbits of $\Phi$ is bounded.
\end{lemma}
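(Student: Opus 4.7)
The plan is to argue by contradiction. Suppose the union $\mathcal{H}$ of all homoclinic orbits is unbounded; then there is a sequence of points $z_n$, each on a homoclinic orbit $\Gamma_n$, with $|z_n|\to\infty$ (in the Euclidean sense).

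For each large $n$ (so that $|z_n|>1$) I would parametrize $\Gamma_n$ with $\Phi_0(z_n)=z_n$, set $C=\{(x,y)\in\RR: x^2+y^2=1\}$, and let
\[
a_n=\sup\{t<0: |\Phi_t(z_n)|=1\}.
\]
Since $\Phi_t(z_n)\to\0$ as $t\to-\infty$ (by homoclinicity) while $|\Phi_0(z_n)|>1$, the intermediate value theorem together with continuity guarantees that $a_n$ is well-defined, and $|\Phi_t(z_n)|>1$ for every $t\in(a_n,0]$. Put $U_n:=\Phi_{a_n}(z_n)\in C$.

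The key assertion is that $-a_n\to\infty$. Indeed, if some subsequence satisfied $-a_n\leq M$, then the compactness of $C$ would allow me (after passing to a further subsequence) to assume $U_n\to U\in C$ and $-a_n\to\alpha\in[0,M]$. Continuous dependence on initial data gives $z_n=\Phi_{-a_n}(U_n)\to\Phi_\alpha(U)\in\RR$, contradicting $|z_n|\to\infty$. Consequently, for every fixed $T>0$ and all sufficiently large $n$, we have $s+a_n\in[a_n,0]$ for $s\in[0,T]$, hence
\[
\Phi_s(U_n)=\Phi_{s+a_n}(z_n)\in\{(x,y)\in\RR:x^2+y^2\geq 1\}\quad\text{for every } s\in[0,T].
\]

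To obtain the contradiction I extract, by compactness of $C$, a subsequential limit $U_n\to U\in C\subset\RR$. Continuous dependence yields $\Phi_s(U_n)\to\Phi_s(U)$ uniformly on $[0,T]$, and the closedness of $\{x^2+y^2\geq 1\}$ then gives $|\Phi_s(U)|\geq 1$ for every $s\in[0,T]$. As this holds for every $T$, we have $|\Phi_s(U)|\geq 1$ for all $s\geq 0$. But $\0$ is a global attractor, so $\omega(U)=\{\0\}$ and $|\Phi_s(U)|\to 0$ as $s\to\infty$, an impossibility. The only delicate step is the verification that $-a_n\to\infty$, which ensures that the forbidden behaviour ``the forward orbit stays in the complement of the unit disk forever'' is inherited by the limit orbit; once that is in place, global attraction of $\0$ supplies the contradiction.
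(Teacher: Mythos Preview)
Your proof is correct and follows essentially the same approach as the paper's: both argue by contradiction, take points $z_n\to\infty$ on homoclinic orbits, locate the last backward hitting time $a_n$ on a fixed circle $C$ around $\0$, and pass to a limit point $U\in C$ whose forward orbit is trapped outside the disk, contradicting global attraction. The only minor difference is that the paper arranges $-a_n\ge n$ by appealing to continuity of the extended flow at $\infty$, whereas you prove $-a_n\to\infty$ directly via continuous dependence; this is a cosmetic reorganization of the same idea.
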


\begin{proof}
 Assume the opposite to find a family of homoclinic orbits
 $\{\varphi(z_n)\}_{n=1}^\infty$ with $z_n\to \infty$ as $n\to
 \infty$ and fix a circle $C$ around $\0$. Using the continuity of the
(extended) flow $\Phi$ at $\infty$,
 there is no loss of generality in assuming that the semi-orbits
 $\Phi_{z_n}([-n,0])$ do not intersect the region $O$ encircled by $C$.
 Next, find  the numbers $a_n\leq -n$, closest to $-n$, such that the points
 $\Phi_{z_n}(a_n)$ belong to $C$
 (using that the orbits $\varphi(z_n)$ are homoclinic)
 and assume, again without loss of generality,
 that the points $u_n=\Phi_{z_n}(a_n)$ converge to $u$. Since
 $\Phi_{u_n}(t)\in \RR\setminus O$ for any $t\in [0,n]$, the continuity of
 the flow implies that $\varphi(u,+)$ does not intersect $O$,
 contradicting that $\0$ is a global attractor.
\end{proof}

Let $\mathcal{H}$ denote the family of homoclinic orbits of
$\Phi$. We introduce a partial order in $\mathcal{H}$ by writing
$\Gamma\preceq \Sigma$ if $\Gamma\subset E(\Sigma)$, when
$\Gamma\prec \Sigma$ means of course $\Gamma\preceq \Sigma$ with
$\Gamma\neq \Sigma$. We say that $\Gamma \in \mathcal{H}$ is
\emph{maximal} if there is no $\Sigma\in \mathcal{H}$ such that
$\Gamma\prec \Sigma$. If $\Gamma,\Sigma\in \mathcal{H}$  and
neither $\Gamma\preceq \Sigma$ nor $\Sigma\preceq \Gamma$ is true,
then we say that $\Gamma$ and $\Sigma$ are \emph{incomparable}.
Realize that a family of pairwise incomparable orbits must be
countable. Moreover, we have:

\begin{lemma}
 \label{haciacero}
 If the orbits $\{\Gamma_n\}_{n=1}^\infty$ are pairwise
 incomparable, then $\diam(\Gamma_n)\to 0$ as $n\to \infty$.
\end{lemma}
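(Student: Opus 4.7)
The plan is to argue by contradiction. Suppose $\diam(\Gamma_n)\not\to 0$, so passing to a subsequence, $\diam(\Gamma_n)\geq\delta>0$. Since $\mathbf{0}\in\overline{\Gamma_n}$, one can choose $p_n\in\Gamma_n$ with $d(p_n,\mathbf{0})\geq\delta/2$. By Lemma~\ref{bounded} all the $\Gamma_n$ lie inside a common Euclidean ball $B$, so a further subsequence gives $p_n\to p$ with $d(p,\mathbf{0})\geq\delta/2$; in particular $p$ is a regular point, and by Lemma~\ref{lounoolootro} the orbit $\Sigma:=\varphi(p)$ is heteroclinic or homoclinic.

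If $\Sigma$ is heteroclinic, then $\Phi(-t,p)\to\infty$ in $\RR_\infty$ as $t\to\infty$, so I fix $T>0$ with $\Phi(-T,p)\notin B$. Continuity of $\Phi$ at $(-T,p)$ then forces $\Phi(-T,p_n)\notin B$ for $n$ large, contradicting $\Phi(-T,p_n)\in\Gamma_n\subset B$.

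If $\Sigma$ is homoclinic, I apply a flow-box argument at $p$. Fix a flow box $U$ at $p$ with transversal arc $T^*$; for $n$ large, $\Gamma_n\cap U$ crosses $T^*$ at a unique point $q_n$, and $q_n\to p$. Passing to subsequences I may assume: (i) all $q_n$ lie on the same side of $p$ in $T^*$, with parameter values $0<\cdots<h_2<h_1$ after parameterizing $T^*$ so that $p$ sits at $0$; and (ii) in $U$, the open half-neighborhood $E(\Gamma_n)\cap U$ of $\Gamma_n\cap U$ is, for every $n$, on the same local side of $\Gamma_n\cap U$---either always on the $\{h>h_n\}$ side or always on the $\{h<h_n\}$ side. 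Now pick any $n<m$, so $h_n>h_m$. In the first sub-case, $q_n$ lies in $E(\Gamma_m)\cap U$; since $q_n\notin\overline{\Gamma_m}=\Bd\,E(\Gamma_m)$, actually $q_n\in\Inte E(\Gamma_m)$. The connected set $\Gamma_n$ is disjoint from $\Bd\,E(\Gamma_m)$ and meets $\Inte E(\Gamma_m)$ at $q_n$, so $\Gamma_n\subset\Inte E(\Gamma_m)\subset E(\Gamma_m)$, that is, $\Gamma_n\preceq\Gamma_m$. In the second sub-case, the symmetric argument yields $\Gamma_m\preceq\Gamma_n$. Either outcome contradicts the pairwise incomparability of $\Gamma_n,\Gamma_m$.

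The main obstacle is this homoclinic sub-case: for heteroclinic $\Sigma$, the boundedness of the $\Gamma_n$ clashes directly with the escape of $\Phi(-t,p)$ to $\infty$, but for homoclinic $\Sigma$ the limit orbit is itself bounded, so one has to exploit instead the global Jordan-curve structure of the disks $E(\Gamma_n)$ via the pigeonhole-based flow-box analysis sketched above.
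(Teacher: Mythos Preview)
Your argument is correct, but it is considerably more elaborate than the paper's, and the extra work is avoidable. The paper does not split into cases according to the nature of the limiting orbit $\Sigma=\varphi(p)$: that orbit plays no role. One simply takes a transversal $T$ to the accumulation point $u$, picks \emph{three} of the $\Gamma_n$'s meeting $T$ at points $u_{n_1},u_{n_2},u_{n_3}$ with $u_{n_2}$ between the other two, and observes (using that a homoclinic orbit meets a transversal at most once) that the subarc of $T$ from $u_{n_1}$ to $u_{n_3}$ crosses the Jordan curve $\Gamma_{n_2}\cup\{\mathbf{0}\}$ exactly once; hence $u_{n_1}$ and $u_{n_3}$ lie in different components of $\RR\setminus(\Gamma_{n_2}\cup\{\mathbf{0}\})$, forcing $\Gamma_{n_1}\prec\Gamma_{n_2}$ or $\Gamma_{n_3}\prec\Gamma_{n_2}$. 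Notice that your ``homoclinic'' flow-box argument already works verbatim when $\Sigma$ is heteroclinic (nothing in it uses that $\Sigma$ is homoclinic), so your separate treatment of that case via Lemma~\ref{bounded} is redundant; and your pigeonhole on the side of $E(\Gamma_n)$, together with the monotone subsequence of heights, is a heavier substitute for the three-point separation trick, which extracts the contradiction from a single Jordan curve rather than from a consistent orientation along the whole subsequence.
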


\begin{proof}
 Suppose the contrary to get a point $u\neq \0$ at which these orbits
 accumulate. Let $T$ be a transversal to $u$  and find points
 $u_{n_k}\in \Gamma_{n_k}\cap T$, $k=1,2,3$, with, say, $u_{n_2}$
 lying between  $u_{n_1}$ and $u_{n_3}$ in $T$. Then $u_{n_1}$ and
 $u_{n_3}$ belong to different regions in $\RR\setminus
 (\Gamma_{n_2}\cup \{\0\})$: we are using here that any homoclinic
 orbit can intersect a transversal at one point at most. Thus,
 either $\Gamma_{n_1}\prec \Gamma_{n_2}$ or $\Gamma_{n_3}\prec
 \Gamma_{n_2}$, contradicting the hypothesis.
\end{proof}

\begin{lemma}
 \label{region}
 Let $\Omega\subsetneqq \RR$ be a region invariant for
 $\Phi$.
 \begin{itemize}
 \item[(i)] If $\Omega$ is bounded, then $\Bd \Omega$ is the union
  set of a homoclinic orbit $\Sigma$, a
  (possibly empty) family $\mathcal{G}$ of pairwise incomparable
  homoclinic orbits  satisfying  $\Gamma\prec \Sigma$ for
  every $\Gamma\in \mathcal{G}$, and the singular point.
 \item[(ii)] If $\Omega$ is unbounded, then its boundary is the
  union set of at most two heteroclinic orbits, a (possibly empty)
    family of pairwise incomparable homoclinic orbits, and the
    singular point.
 \end{itemize}
\end{lemma}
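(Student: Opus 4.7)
My plan is to study $\Bd\Omega$ orbit-by-orbit. Since $\Phi_t$ is a homeomorphism and $\Omega$ is invariant, $\Bd\Omega$ is invariant too. Also $\0\in\Bd\Omega$: indeed, $\0\in\Cl\Omega$ by global attraction, while $\0$ cannot belong to $\Omega$ itself, because any invariant open neighbourhood of the attractor would absorb every orbit and force $\Omega=\RR$, contrary to $\Omega\subsetneqq\RR$. By Lemma~\ref{lounoolootro}, every non-trivial orbit in $\Bd\Omega$ is homoclinic or heteroclinic.

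For (i), boundedness of $\Omega$ rules out heteroclinic orbits in $\Bd\Omega$ (heteroclinic orbits are unbounded since their $\alpha$-limit is $\{\infty\}$). To extract the outer orbit $\Sigma$, let $K$ be the component of $\RR_\infty\setminus\Omega$ containing $\infty$ and form the hull $\hat\Omega=\RR_\infty\setminus K$. Using that $\Bd V\subset\Bd\Omega$ for every component $V$ of $\RR_\infty\setminus\Omega$ (a standard plane-topology argument via small connected neighbourhoods), one checks that $\hat\Omega$ is a bounded, simply connected, open region of $\RR$ containing $\Omega$ with $\Bd\hat\Omega\subset\Bd\Omega$; invariance of $K$ (the unique component containing the fixed point $\infty$) transfers to $\hat\Omega$. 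The central claim is then $\Bd\hat\Omega=\Sigma\cup\{\0\}$ for a single homoclinic orbit $\Sigma$. Any two distinct homoclinic orbits $\Sigma,\Sigma'\subset\Bd\hat\Omega$ would be either incomparable---so that $\Sigma\cup\Sigma'\cup\{\0\}$ draws a figure-8 in $\RR$ whose interior disks $\Inte E(\Sigma)$ and $\Inte E(\Sigma')$ are disjoint, forcing the connected and bounded $\hat\Omega$ into just one of them and separating it from the other orbit---or comparable, say $\Sigma'\prec\Sigma$, in which case $\hat\Omega\subset\Inte E(\Sigma)\setminus(\Sigma'\cup\{\0\})$ has two candidate components, $\Inte E(\Sigma')$ (which separates $\Sigma$ from $\hat\Omega$) and an annular region (which would make $\hat\Omega$ non-simply-connected). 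Either way a contradiction, so $\Bd\hat\Omega=\Sigma\cup\{\0\}$ and $\hat\Omega=\Inte E(\Sigma)$; hence $\Omega\subset\Inte E(\Sigma)$. Any other orbit $\Gamma\in\Bd\Omega$ lies in $\Cl\Omega\subset E(\Sigma)$ and is disjoint from $\Sigma\cup\{\0\}$, so $\Gamma\subset\Inte E(\Sigma)$ and $\Gamma\prec\Sigma$. Finally, $\Gamma_1\prec\Gamma_2$ with both in $\Bd\Omega\setminus\{\Sigma\}$ would force the connected $\Omega$ into the annular region $\Inte E(\Sigma)\setminus E(\Gamma_2)$ (the alternative $\Omega\subset\Inte E(\Gamma_2)$ is ruled out by $\Sigma\in\Bd\Omega$), separating $\Gamma_1\subset\Inte E(\Gamma_2)$ from $\Omega$ and contradicting $\Gamma_1\in\Bd\Omega$. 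This yields the pairwise incomparability and closes the bounded case.

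For (ii), the key is that $\Bd\Omega$ contains at most two heteroclinic orbits. Three distinct heteroclinic orbits $\Gamma_1,\Gamma_2,\Gamma_3\subset\Bd\Omega$, together with $\{\0,\infty\}$, would form a $\theta$-graph in $\RR_\infty$ whose complement has exactly three components; the connected set $\Omega$, disjoint from all three arcs and from $\{\0,\infty\}$, would lie in a single component, whose frontier contains only two of the three arcs, contradicting $\Gamma_3\in\Bd\Omega$. The pairwise incomparability of the homoclinic orbits in $\Bd\Omega$ uses unboundedness directly: the Jordan curve $\Gamma\cup\{\0\}$ associated to a homoclinic $\Gamma\in\Bd\Omega$ divides $\RR$ into a bounded interior $\Inte E(\Gamma)$ and an unbounded exterior, and since $\Omega$ is connected, unbounded, and disjoint from $\Gamma\cup\{\0\}$, we must have $\Omega\subset\RR\setminus E(\Gamma)$, hence $\Cl\Omega\cap\Inte E(\Gamma)=\emptyset$; consequently, were $\Gamma_1\prec\Gamma_2$ with both in $\Bd\Omega$, then $\Gamma_1\subset\Inte E(\Gamma_2)$ would be separated from $\Omega$---again contradicting $\Gamma_1\subset\Bd\Omega$.

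The principal difficulty lies in the plane-topology step of part (i) that distils the single outer orbit $\Sigma$ out of the continuum $\Bd\hat\Omega$: both the incomparable (figure-8) and comparable (nested) alternatives must be treated, and the elimination of the annular configuration crucially uses the simple-connectedness of $\hat\Omega$. Once this is in hand, both the outer/inner structure in (i) and the $\theta$-graph analysis in (ii) reduce to elementary Jordan-curve separation arguments.
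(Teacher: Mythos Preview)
Your argument for (ii) is sound, and the final incomparability step in (i) is fine, but the central claim---that $\Bd\hat\Omega$ contains exactly one homoclinic orbit---has a genuine gap in the comparable sub-case. You assert that if $\Sigma'\prec\Sigma$ both lie in $\Bd\hat\Omega$, then $\hat\Omega$ sitting in the ``annular'' component $\Inte E(\Sigma)\setminus E(\Sigma')$ would force $\hat\Omega$ to be non-simply-connected. This is false: since $E(\Sigma')$ and $\Bd E(\Sigma)$ share the point $\0$, the set $\RR_\infty\setminus\bigl(\Inte E(\Sigma)\setminus E(\Sigma')\bigr)=(\RR_\infty\setminus\Inte E(\Sigma))\cup E(\Sigma')$ is connected, so $\Inte E(\Sigma)\setminus E(\Sigma')$ is itself simply connected (a disk with a sub-disk removed that touches the boundary). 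Concretely, take any pair $\Sigma'\prec\Sigma$ (as in an elliptic sector) and set $\Omega=\Inte E(\Sigma)\setminus E(\Sigma')$: then $\RR_\infty\setminus\Omega$ is already connected, so $K=\RR_\infty\setminus\Omega$, $\hat\Omega=\Omega$, and $\Bd\hat\Omega=\Sigma\cup\Sigma'\cup\{\0\}$ contains two orbits. Your hull does not isolate a single outer orbit.

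The paper proceeds differently and avoids the hull. It first observes (essentially your own final step, applied earlier) that if any two orbits in $\Bd\Omega$ are comparable, say $\Gamma_1\prec\Gamma_2$, then connectedness of $\Omega$ forces $\Omega\subset\Inte E(\Gamma_2)\setminus E(\Gamma_1)$, whence every other boundary orbit is $\prec\Gamma_2$ and incomparable with $\Gamma_1$; iterating shows $\Gamma_2$ serves as $\Sigma$ and (i) holds. So if (i) fails, \emph{all} boundary orbits are pairwise incomparable; then Lemma~\ref{haciacero} ($\diam(\Gamma_n)\to 0$) makes $O=\RR\setminus\bigcup_n E(\Gamma_n)$ an open region containing $\Omega$ with $\Bd O=\Bd\Omega$, forcing $\Omega=O$---but $O$ is unbounded, a contradiction.
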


\begin{proof}
 Since $\Omega$ in invariant, $\Bd \Omega$ is invariant as well,
 and the statement (ii) follows easily from the connectedness of $\Omega$.
 To prove (i), assume that the
 boundary of the bounded region $\Omega$ is not as described and
 realize that then we must have
 $\Bd \Omega=
 \{\0\}\cup\bigcup_n \Gamma_n$ for a family
 $\{\Gamma_n\}_n$ (having at least two elements)
 of pairwise incomparable homoclinic orbits.
 Lemma~\ref{haciacero}  implies that $O=\RR\setminus
 \bigcup_n E(\Gamma_n)$ is a region including $\Omega$  with
 the same boundary as $\Omega$. Hence $\Omega=O$, contradicting
 that $\Omega$ is bounded.
\end{proof}

\begin{lemma}
 \label{maximal}
 Let $\Gamma\in \mathcal{H}$. Then there is $\Sigma\in \mathcal{H}$,
 maximal for  ``$\prec$'',
 such that $\Gamma\preceq \Sigma$.
\end{lemma}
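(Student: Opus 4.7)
The plan is to obtain $\Sigma$ via a Zorn-type argument on the subposet
$\mathcal{S} = \{\Lambda \in \mathcal{H} : \Gamma \preceq \Lambda\}$
of $(\mathcal{H},\preceq)$. Since $\Gamma \in \mathcal{S}$, the subposet is nonempty, so it suffices to show that every nonempty chain $\mathcal{C} \subset \mathcal{S}$ has an upper bound in $\mathcal{S}$; a maximal element of $\mathcal{S}$ is then automatically maximal in $\mathcal{H}$, because any $\Sigma' \in \mathcal{H}$ strictly dominating such a maximal $\Sigma$ would still satisfy $\Gamma \preceq \Sigma'$ and thus belong to $\mathcal{S}$.

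Given a chain $\mathcal{C}$, I would form the set
$V = \bigcup_{\Lambda \in \mathcal{C}} \Inte E(\Lambda)$.
The first batch of checks is that $V$ is a bounded invariant region properly contained in $\RR$, so that Lemma~\ref{region}(i) can be applied. Openness is immediate; invariance follows because each $\Inte E(\Lambda)$ is bounded by the Jordan curve $\Lambda \cup \{\0\}$, which is a union of orbits. Connectedness is a direct consequence of $\mathcal{C}$ being a chain: the sets $\Inte E(\Lambda)$ are totally ordered by inclusion, so any open--open separation of $V$ would confine each $\Inte E(\Lambda)$ entirely to one side. For boundedness, I would use Lemma~\ref{lounoolootro}: any orbit inside the bounded region $\Inte E(\Lambda)$ is bounded, hence cannot be heteroclinic (since heteroclinic orbits satisfy $\alpha(z)=\{\infty\}$ and are therefore unbounded), so it must be homoclinic; therefore $V$ is contained in the union of all homoclinic orbits, which is bounded by Lemma~\ref{bounded}.

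With $V$ a bounded invariant region, Lemma~\ref{region}(i) yields that $\Bd V$ consists of $\{\0\}$, a distinguished homoclinic orbit $\Sigma$, and a family $\mathcal{G}$ of pairwise incomparable homoclinic orbits, each $\prec \Sigma$. Because $V$ is bounded and $\Sigma \cup \{\0\}$ is a Jordan curve in $\Bd V$, the region $V$ must lie in the bounded component of the complement, that is, $V \subset \Inte E(\Sigma)$. For any $\Lambda \in \mathcal{C}$ this gives $E(\Lambda) = \Cl(\Inte E(\Lambda)) \subset \Cl V \subset E(\Sigma)$, so $\Lambda \preceq \Sigma$; in particular, picking any $\Lambda_0 \in \mathcal{C}$, we get $\Gamma \preceq \Lambda_0 \preceq \Sigma$, so $\Sigma \in \mathcal{S}$ is the desired upper bound.

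The only subtle step is verifying connectedness and (especially) boundedness of $V$; everything else is formal once Lemma~\ref{region}(i) is in hand. The boundedness relies on chaining Lemma~\ref{lounoolootro} with Lemma~\ref{bounded} to rule out the presence of heteroclinic orbits inside $\Inte E(\Lambda)$, and this is where global attraction to $\0$ really enters the argument.
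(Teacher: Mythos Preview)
Your argument is correct and follows essentially the same route as the paper: form the union of the interiors $\Inte E(\Lambda)$, check it is a bounded invariant region, apply Lemma~\ref{region}(i) to extract the outermost homoclinic orbit $\Sigma$, and verify it dominates everything in the chain. The one simplification you miss is that the paper observes, via the Jordan curve theorem, that the whole set $\mathcal{S}=\{\Lambda\in\mathcal{H}:\Gamma\preceq\Lambda\}$ is \emph{already} totally ordered (at least when $\Gamma$ is regular), so Zorn is unnecessary: one can take $\mathcal{C}=\mathcal{S}$ directly and produce the maximum in one shot. Your Zorn wrapping is harmless, and your explicit invocation of Lemma~\ref{lounoolootro} to justify that $V$ consists only of homoclinic orbits (hence is bounded by Lemma~\ref{bounded}) is a detail the paper leaves implicit.
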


\begin{proof}
 If $\Gamma$ is not maximal itself, then the Jordan curve theorem
 implies that the non-empty family $\mathcal{F}=
 \{\Gamma' \in \mathcal{H} : \Gamma \preceq \Gamma'\}$ is a totally ordered
 subset of $\mathcal{H}$; accordingly,
 it is enough to show that $\mathcal{F}$ has a maximal
 element for $\preceq$.

 Say $\mathcal{F}=\{\Gamma_i\}_i$. Then, because
 of the total ordering, $\Omega=\bigcup_i \Inte
 E(\Gamma_i)$ is a region invariant for $\Phi$, and because
 of Lemma~\ref{bounded}, $\Omega$ is bounded. As a result,
 we can apply Lemma~\ref{region}(i) to obtain the corresponding
 homoclinic boundary orbit $\Sigma$. Then, clearly, $\Sigma$
 is the maximal element of $\mathcal{F}$.
\end{proof}

\begin{remark}
 \label{maxhomo}
 Note that all maximal homoclinic orbits of $\Phi$ are separatrices.
\end{remark}

\begin{lemma}
 \label{transversal}
 Let $z$ be a regular point. Then there is a transversal $T$ to
 $z$ such that, for every  $u\in T$, $\varphi(u)$ intersects $T$
 exactly at $u$.
\end{lemma}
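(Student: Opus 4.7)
The plan is to take any transversal $T_0$ at $z$ and successively shrink it. First, by the classical Poincar\'e--Bendixson monotonicity of the intersections of an orbit with a transversal, the set $\varphi(z)\cap T_0$ is a monotone sequence in $T_0$. Since $\omega(z) = \{\0\}$ and $\alpha(z) \subseteq \{\0,\infty\}$ (Lemma~\ref{lounoolootro}), and $\overline{T_0}$ can be chosen to avoid $\0$ (using $z \neq \0$), this monotone sequence cannot accumulate in $\overline{T_0}$, hence is finite. Removing the finitely many extra points yields a subarc $T_1\ni z$ with $\varphi(z)\cap T_1 = \{z\}$.

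Then I would argue by contradiction that some further subarc $T \subset T_1$ has the required property. If not, there exist shrinking subarcs $T_n \to \{z\}$, points $u_n \in T_n$, and minimal first-return times $t_n > 0$ such that $v_n := \Phi(t_n,u_n) \in T_n$; in particular $u_n,v_n\to z$. Using a flow-box around $z$, any first-return time is bounded below by a positive constant $\tau_0$ (the time to leave and re-enter the box), so $t_n \geq \tau_0$. If $(t_n)$ has a bounded subsequence converging to $t^*\geq\tau_0$, continuity yields $\Phi(t^*,z)=z$, making $\varphi(z)$ periodic; but then $\omega(z)=\varphi(z)\neq\{\0\}$, contradicting global attraction.

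Hence it suffices to rule out $t_n \to \infty$. For each $n$ form the Jordan curve $J_n = \Phi([0,t_n],u_n)\cup S_n$, where $S_n\subset T_1$ is the subarc joining $u_n$ and $v_n$, and let $D_n$ be the bounded component of $\RR\setminus J_n$. Because the flow is transverse to $S_n$ in a single direction, $D_n$ is either positively or negatively invariant under $\Phi$. In either case, Lemma~\ref{lounoolootro} forces $\0\in D_n$: otherwise all orbits in $D_n$ would have their forward (or backward) limit sets confined to the bounded set $\overline{D_n}\subset\RR$, incompatible with $\omega(p)=\{\0\}$ and $\alpha(p)\subseteq\{\0,\infty\}$ for every $p$.

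As $n\to\infty$, $S_n$ shrinks to $\{z\}$, and continuous dependence on initial conditions on any compact time interval---applied forward from $u_n$ and backward from $v_n$---shows that the Hausdorff limit of $J_n$ contains $\overline{\varphi(z)}$. The contradiction now follows from geometric incompatibility: in the heteroclinic case $\overline{\varphi(z)}$ is unbounded in $\RR$, so the $D_n$, which contain $\0$ and have boundaries accumulating on an unbounded set, cannot remain bounded disks; in the homoclinic case $\overline{\varphi(z)}=\varphi(z)\cup\{\0\}$ is a Jordan curve enclosing $E(\varphi(z))$, and having both near-$z$ points $u_n\neq v_n$ lie on $J_n$ while the $D_n$ Hausdorff-approach either $E(\varphi(z))$ or its exterior forces a ``pinch'' at $z$ that is ruled out by simplicity of the $J_n$. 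The main obstacle is the $t_n\to\infty$ case; making the Hausdorff-limit step rigorous, uniformly across the heteroclinic/homoclinic dichotomy, is the delicate point, and it is here that Lemma~\ref{lounoolootro} is essential.
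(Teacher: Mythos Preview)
Your reduction to a sequence $u_n,v_n\to z$ with return times $t_n$ is sound, and the bounded-$t_n$ case is dispatched correctly via periodicity. The gap is in the $t_n\to\infty$ case: neither branch of your ``geometric incompatibility'' is a valid contradiction. In the heteroclinic branch you write that the $D_n$ ``cannot remain bounded disks,'' but each $D_n$ is bounded \emph{by definition} (you chose the bounded complementary component), and there is no obstruction to a sequence of bounded Jordan disks, all containing $\0$, whose boundaries have diameters tending to infinity. In the homoclinic branch, the ``pinch'' you describe is a feature of the Hausdorff \emph{limit}, not of any individual $J_n$; simplicity of each $J_n$ says nothing about the shape of their limit. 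So the contradiction is not established, and the delicate point you yourself flag remains open.

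The paper sidesteps the limit argument entirely with a direct construction. Its two ingredients are: (a) every orbit meets a fixed transversal $Q$ only finitely often (which you also prove); and (b) if an orbit meets $Q$ at \emph{consecutive} points $u,v$, then no orbit meets the open subarc of $Q$ between $u$ and $v$ more than once (the region bounded by $\varphi(u,v)$ and $[u,v]\subset Q$ is semi-invariant, so an orbit crossing $(u,v)$ once cannot cross back). From (a) and (b) one extracts a subarc $T\ni z$ whose \emph{endpoints} $p,q$ satisfy $\varphi(p)\cap T=\{p\}$ and $\varphi(q)\cap T=\{q\}$. Now a single Jordan-curve argument finishes: if some orbit met $T$ at consecutive points $u,v$, the disk $D\subset\RR_\infty$ bounded by $\varphi(u,v)\cup[u,v]$ and containing $\0$ would have one of $p,q$ outside; that endpoint's orbit must reach $\0\in D$, hence must cross $\partial D$, hence must cross $[u,v]\subset T$ at a second point---contradicting the choice of $p,q$. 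The carefully chosen endpoints play exactly the role that your Hausdorff-limit step was meant to play, but without any asymptotic analysis.
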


\begin{proof}
 Fix an arc $Q$ transversal to $z$. Note that no orbit can intersect $Q$
 infinitely many times. Also, if some orbit
 intersects $Q$ at consecutive times $t<s$ and corresponding
 points $u$ and $v$, then no orbit can intersect the open arc in
 $Q$ with endpoints $u$ and $v$ more than once. Using these two
 facts it is easy to construct a transversal $T\subset Q$ to $z$
 with endpoints $p$ and $q$ such that the orbits $\varphi(p)$ and
 $\varphi(q)$ intersect $T$ at exactly $p$ and $q$. This is the
 transversal we are looking for, because if an orbit $\Gamma$
 consecutively intersects $T$ at points $u$ and $v$, and $D$ is the
 disk in $\RR_\infty$ enclosed by $\varphi(u,v)$ and the arc in $T$ with endpoints
 $u$ and $v$ such that $\0\in D$, then either $\varphi(p)$ or
 $\varphi(q)$ does not intersect $D$, a contradiction.
\end{proof}

\begin{lemma}
 \label{controlado}
 If $\Omega$ is a canonical region and $\Gamma,\Gamma'$ are
 distinct orbits in $\Omega$, then there is
 a solid strip $S\subset \Omega$ such that
 $\Bd S=\Gamma\cup\Gamma'\cup \{\0\}$.
\end{lemma}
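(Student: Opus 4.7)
The plan is to construct $S$ as the union of orbits through a suitable closed transversal arc joining $\Gamma$ to $\Gamma'$. By Proposition~\ref{canonical}, $\Omega$ is either radial or a strip; in the strip case Remark~\ref{h-h} forces all orbits of $\Omega$ to share a common type, all heteroclinic or all homoclinic. In every situation $\Omega$ admits a complete transversal $Q$ (a circle if radial, an open arc otherwise) meeting each orbit of $\Omega$ at exactly one point, and I let $z,z'$ be the unique points of $Q$ on $\Gamma,\Gamma'$ respectively. Then I pick a closed subarc $T\subset Q$ with endpoints $z$ and $z'$: either component of $Q\setminus\{z,z'\}$ in the radial case, the unique middle component in the strip case.

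Since $T$ is a subarc of $Q$, it is transversal to $\Phi$, and every orbit of $\Omega$ meets $T$ at most once. Put $S=\bigcup_{u\in T}\varphi(u)\subset\Omega$. The standard construction recalled in Subsection~\ref{specialflows}, applied to the open transversal arc $T\setminus\{z,z'\}$, shows that $S\setminus(\Gamma\cup\Gamma')$ is a strip. To promote $S$ itself to a strong strip with $\Gamma,\Gamma'$ as boundary orbits and $T$ as a strong transversal, I parameterize $T$ by a homeomorphism $f\colon\mathbb{S}^1\cap\Cl\mathbb{H}\to T$ with $f(1)=z$, $f(-1)=z'$, and set $h(e^{-t+\ii\theta})=\Phi(t,f(e^{\ii\theta}))$ for $t\in\R$, $\theta\in[0,\pi]$; this gives a homeomorphism from $\Cl\mathbb{H}\setminus\{\0\}$ onto $S$ conjugating the flow of $f_2$ with the restriction of $\Phi$ to $S$.

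It then remains to verify solidity, $\Cl S=S\cup\{\0\}$ in $\RR$, which together with the openness of the strip $S\setminus(\Gamma\cup\Gamma')$ in $\RR$ immediately yields $\Bd S=\Gamma\cup\Gamma'\cup\{\0\}$. Given $w\in(\Cl S)\cap\RR$, write $w=\lim_n\Phi(t_n,p_n)$ with $p_n\in T$, and by compactness of $T$ and of $[-\infty,+\infty]$ pass to subsequences with $p_n\to p\in T$ and $t_n\to\tau\in[-\infty,+\infty]$. If $\tau\in\R$, continuity of $\Phi$ gives $w=\Phi(\tau,p)\in S$. If $\tau=+\infty$, the uniform attraction of the compact arc $T$ toward $\0$ forces $w=\0$. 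If $\tau=-\infty$, the shared orbit type (Lemma~\ref{lounoolootro} and Remark~\ref{h-h}) gives either $w=\0$ (homoclinic case) or that $\Phi(t_n,p_n)$ escapes to $\infty$ in $\RR_\infty$ (heteroclinic case), the latter contradicting $w\in\RR$. Hence $\Cl S\cap\RR=S\cup\{\0\}$.

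The main obstacle is the uniform convergence behind these last two cases: forward attraction of $T$ to $\0$, and backward escape to $\infty$ in the heteroclinic case. In the radial case both follow from the disk-shrinking argument of Remark~\ref{fundamental} applied to the transversal circle $Q$ (together with its forward analogue, shrinking to $\0$). In the strip case they are obtained by transferring the corresponding properties of the model flow $f_2$ on $\Cl\mathbb{H}\setminus\{\0\}$ through the conjugacy $h$, using the compactness of $T$ inside $\RR$ to rule out stray limit orbits and reduce the question to the uniform radial contraction of $f_2$ on compacta of $\Cl\mathbb{H}\setminus\{\0\}$.
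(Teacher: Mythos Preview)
Your argument has a genuine gap in the strip case, and it is circular in exactly the place you flag as ``the main obstacle.'' You assert that $h(e^{-t+\ii\theta})=\Phi(t,f(e^{\ii\theta}))$ is a homeomorphism from $\Cl\mathbb{H}\setminus\{\0\}$ onto $S$, but this is \emph{not} automatic: the construction in Subsection~\ref{specialflows} only covers the case of an \emph{open} transversal arc, and extending it to the closed arc $T$ requires precisely the uniform convergence $\Phi_t(T)\to\0$ (and $\Phi_t(T)\to\0$ or $\infty$ as $t\to-\infty$) that you later try to justify. When you then write that in the strip case uniform convergence is obtained ``through the conjugacy $h$,'' you are invoking the very homeomorphism whose existence is equivalent to what you are proving. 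If instead you meant the conjugacy $g:\mathbb{H}\to\Omega$ that realizes $\Omega$ as a strip, that does not help either: $g$ does not extend to $\Bd\Omega$, so uniform contraction of $g^{-1}(T)$ to $\0$ under the $f_2$-flow only tells you that $\Phi_t(T)$ leaves every compact subset of $\Omega$, which is compatible with accumulation on $\Bd\Omega$ rather than on $\0$.

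What is missing is the one feature of canonical regions you never use: $\Omega$ contains no separatrices, so every $z\in T$ lies in some solid strip $S_z\subset\Omega$ (shrinking if necessary so that $\Cl S_z\subset\Omega\cup\{\0\}$). On each such $S_z$ the uniform convergence \emph{is} available, by Remark~\ref{fundamental}. Covering the compact arc $T$ by finitely many of these local solid strips (via their strong transversals, which are subarcs of $Q$) and gluing is exactly the paper's proof, and it is the step your argument needs to close the gap. Your radial case is fine, since there $\Omega=\RR\setminus\{\0\}$ and the disk-shrinking argument of Remark~\ref{fundamental} (with its forward analogue) applies directly.
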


\begin{proof}
 Let $Q$ be a complete transversal to $\Omega$ and let $A\subset Q$ be
 an arc with endpoints belonging to $\Gamma$ and $\Gamma'$.
 Since $\Omega$ includes no
 separatrices, for any point $z\in Q$ there is a solid
 strip in $\Omega$, containing $z$, whose closure intersects $Q$
 at a small arc in $Q$ (this small arc thus being a strong
 transversal to the strip). Taking this into account, and applying a
 simple compactness argument to $A$, the lemma follows.
\end{proof}

Recall that $\Phi$ admits one radial region at most, that
consisting of all heteroclinic orbits of $\Phi$
(Remark~\ref{fundamental}). Indeed, such is the case:

\begin{proposition}
 \label{atractorestable}
 Let $R$ be the union set of
 all heteroclinic orbits of $\Phi$.
 Then it is radial. Moreover:
 \begin{itemize}
  \item[(i)] If $R=\RR\setminus
   \{\0\}$, that is, all regular orbits of $\Phi$ are
   heteroclinic, then $\Phi$ is topologically equivalent to the
   associated flow to  $f_2(x,y)=(-x,-y)$ in $\RR$ (hence $\0$ is
   positively stable and it is the only separatrix of $\Phi$).
  \item[(ii)] If $R\neq \RR\setminus \{\0\}$, then $R$ includes
   a separatrix of $\Phi$.
  \end{itemize}
\end{proposition}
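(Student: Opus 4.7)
The plan is to establish that $R$ is a non-empty, open, connected, invariant region admitting a complete transversal circle around $\0$, hence radial. Part (i) then follows immediately, and (ii) is proved by contradiction using Lemma~\ref{controlado}.

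By Lemma~\ref{bounded} a closed disk $B$ contains all homoclinic orbits, so by Lemma~\ref{lounoolootro} every point of $\RR\setminus B$ is heteroclinic; thus $R\neq\emptyset$ and $\RR\setminus B\subset R$. Openness at $z\in R$ follows from continuous dependence for the extended flow on $\RR_\infty$: since $\alpha(z)=\{\infty\}$ some $\Phi_z(-T)$ lies in $\RR_\infty\setminus B$, so a neighborhood $U$ of $z$ satisfies $\Phi_{-T}(U)\subset \RR_\infty\setminus B$; no homoclinic orbit ever leaves $B$, whence $U\subset R$. Connectedness is immediate since $\RR\setminus B\subset R$ is connected and every heteroclinic orbit meets it backward in time. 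To show $R$ is radial I will construct a Jordan curve $C\subset R$ around $\0$, transversal to $\Phi$ and crossed exactly once by each heteroclinic orbit: for $r$ large enough that $\{|z|=r\}\subset \RR\setminus B$, Lemma~\ref{transversal} produces at each point a local transversal hit at most once by any orbit; compactness gives a finite subcover by the orbit-saturations of such arcs, and compatible subarcs spliced together yield the desired $C$. The standard construction from Subsection~\ref{specialflows} then exhibits an equivalence between $\Phi\mid R$ and the $f_2$ flow on $\RR\setminus\{\0\}$. The orientation-consistent splicing of local transversals as one goes once around $\0$ is the principal technical obstacle here.

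For part (i), if $R=\RR\setminus\{\0\}$, extending the radial equivalence by $\0\mapsto\0$ yields a plane equivalence with the flow of $f_2$, and the remaining assertions are read off the linear model. For part (ii), assume $R\neq\RR\setminus\{\0\}$; by Lemma~\ref{maximal} there is a maximal homoclinic $\Sigma$, and every orbit in $\Inte E(\Sigma)$ is bounded, hence homoclinic, so $E(\Sigma)\subset \RR\setminus R$. If $R$ contained no separatrix, then by connectedness it would lie in a single canonical region which, by Proposition~\ref{canonical}, Remark~\ref{h-h}, and the non-simple-connectedness of $R$ (witnessed by the bounded component $E(\Sigma)$ of $\RR\setminus R$), must be a radial region equal to $R$ itself (Remark~\ref{fundamental}). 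Lemma~\ref{controlado} then lets us bound any two heteroclinic orbits $\Gamma_1,\Gamma_2\subset R$ by a solid strip $S\subset R$ with $\Cl S=S\cup\Gamma_1\cup\Gamma_2\cup\{\0\}$, a topological disk with $\0$ on its boundary. Choosing $\Gamma_1,\Gamma_2$ so that they approach $\0$ from the two locally opposite sides of the Jordan arc $\Sigma\cup\{\0\}$ at $\0$, a Jordan-curve argument forces $\Sigma$ to be engulfed by $\Cl S$; since $\Sigma\cap R=\emptyset$ while $S\subset R$, this is impossible. Producing such opposite-side heteroclinic orbits and making the Jordan-curve argument precise is the subtlest point of the proof, and will draw on the radial structure of $R$ together with the fact that every orbit in $R$ must reach $\0$ without crossing $\Sigma$.
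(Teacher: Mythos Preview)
Your overall architecture is sound, but both places you flag as ``the principal technical obstacle'' and ``the subtlest point'' are exactly where the paper takes a different, much shorter path---and your version of the second step, as written, does not quite close.

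For the radiality of $R$, you build a global transversal circle by splicing local transversals from Lemma~\ref{transversal}.  Making the splices simultaneously transversal, simple, and orientation-consistent is genuinely awkward.  The paper sidesteps this entirely: it first proves~(i) (every regular orbit lies in a solid strip when $R=\RR\setminus\{\0\}$, via a single local transversal and the disk it bounds with two orbits, $\0$ and $\infty$), and then, in the general case, collapses $K=\RR\setminus R$ to a point using Lemma~\ref{cociente}.  The quotient is again a sphere, the induced flow on $\mathcal{Q}\setminus\{K,\infty\}$ has \emph{every} regular orbit heteroclinic, so~(i) applies there and pulls back to give the radial structure of $R$.  No circle-building is needed.

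For~(ii), your Jordan-curve strategy has a gap.  Lemma~\ref{controlado} gives a solid strip $S\subset R$ with $\Bd S=\Gamma_1\cup\Gamma_2\cup\{\0\}$; but precisely because $S\subset R$ and $\Sigma\cap R=\emptyset$, the homoclinic $\Sigma$ sits in the \emph{other} component of $\RR\setminus\Bd S$, not in $\Cl S$.  Your ``opposite sides'' choice does not force $\Sigma\subset\Cl S$; it forces the opposite.  The paper's argument is cleaner and avoids this: assuming $R$ contains no separatrix, $R$ is a canonical region with a complete transversal circle $C$; Remark~\ref{fundamental} gives $\Phi_t(C)\to\infty$ as $t\to-\infty$, and Lemma~\ref{controlado} (covering $C$ by finitely many strong transversals to solid strips) gives $\Phi_t(C)\to\0$ as $t\to+\infty$.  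Hence $R=\bigcup_t\Phi_t(C)=\RR\setminus\{\0\}$, the desired contradiction.  If you want to salvage your route, this uniform-convergence step is the natural replacement for the Jordan-curve argument.
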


\begin{proof}
First we assume  $R=\RR\setminus \{\0\}$. To prove that $R$ is
radial and (i) holds, it suffices to show that $\0$ is the only
 separatrix of $\Phi$
 (Proposition~\ref{canonical} and Theorem~\ref{markus}).
 Take $z\in R$ and let $T\subset R$
 be an arc transversal to $z$ with the property that the orbits of all its
 points intersect $T$ exactly once (Lemma~\ref{transversal}). Let
 $p$ and $q$ be the endpoints of $T$ and let $D$ be the disk in
 $\RR_\infty$ enclosed by $\varphi(p)$, $\varphi(q)$, $\0$ and
 $\infty$ and including $T$. If $u\in \Inte D$, then $\varphi(u)$
 intersect $T$ (because it is heteroclinic). Therefore, $\Inte D$ is a
 heteroclinic solid strip; in particular, $\varphi(z)$ is not a
 separatrix.

 Assume now $R\neq \RR\setminus \{\0\}$.
 Applying Lemma~\ref{cociente} to the union set $K=\RR\setminus R$ of all sets
 $E(\Gamma)$ with $\Gamma$ maximal for ``$\prec$" (recall also
 Lemmas~\ref{haciacero} y \ref{maximal}), and using (i), we can
 construct a topological equivalence between the restriction of
 $\Phi$ to $R$ and the restriction (to  $\RR\setminus \{\0\}$) of the
 associated flow to $f_2$. In particular, $R$ is radial.

 To prove the last statement of the proposition, assume that $R$
 includes no separatrices (hence it is a canonical region by
 Proposition~\ref{canonical}),
 fix a complete transversal circle $C$ to $R$ and apply
 Lemma~\ref{controlado} (recall also Remark~\ref{fundamental})
 to conclude the uniform
 convergence of $\Phi_t(C)$ to $\0$ and $\infty$ as $t\to
 \pm\infty$. Then $R=\bigcup_{t\in\R} \Phi_t(C)=\RR\setminus
 \{\0\}$, a contradiction.
\end{proof}

\section{Proof of Theorem~\ref{principal}}

In this section we assume, besides global attraction, that $\0$ is
not positively stable and $\Phi$ has finitely many separatrices.

Let $\mathcal{X}$  be a separatrix skeleton for $\Phi$, fix a
small circle $C$ around $\0$ and let
$X=\Delta_\Phi(\mathcal{X},C)$ be the configuration of $\Phi$ in
$C$. Also, fix an orientation $\Theta$ (counterclockwise or
clockwise) in $\RR$ and a heteroclinic separatrix $\Sigma$ in
$\mathcal{X}$ (such an orbit exists because of
Proposition~\ref{atractorestable}(ii)). Let $q$ be the
$\omega$-point of $\Sigma$ in $C$. Find disjoint open arcs
$J,J'\subset C$ whose closures have $q$ as their common endpoint
(small enough so that they do not contain any points from $X$),
take points $p\in J$, $p'\in J'$, and assume that they are
labelled so that the orientation of $(p,q,p')$ in $C$ is that
given by $\Theta$ (that is, $(p,q,p')$ is positive  if and only if
$\Theta$ is the counterclockwise orientation). Finally, after
removing $J'$ from $C$, we get an arc $A$ with endpoints $a$ (the
other endpoint of the closure of $J'$) and $q$, and order the
points from $A$ in the natural way so that $a<q$.

We call positive (negative) homoclinic orbits \emph{even} when
$\Theta$ is the counterclockwise (clockwise) orientation, and
\emph{odd} when $\Theta$ is the clockwise (counterclockwise)
orientation. Thus, a homoclinic orbit from $\mathcal{X}$ is even
if and only if its $\alpha$-point $v$ and its $\omega$-point $w$
satisfy $v<w.$ By convention, all heteroclinic orbits are even. We
say that two orbits have the \emph{same parity} when both are even
or both are odd.

According to Proposition~\ref{canonical} and, again,
Proposition~\ref{atractorestable}(ii), all canonical regions are
indeed strips, so we will call them \emph{canonical strips}.
Recall (Remark~\ref{h-h}) that any canonical strip must be either
heteroclinic or homoclinic. By Lemma~\ref{region}, the boundary of
any heteroclinic canonical strip $\Omega$ consists of (apart from
$\0$) two heteroclinic separatrices (or just $\Sigma$, when
$\Omega=R\setminus \Sigma$ is the union set of all heteroclinic
orbits except $\Sigma$) and several (possibly zero) maximal
homoclinic separatrices (Remark~\ref{maxhomo}), when $\Omega$ is
called \emph{elementary} if and only if this last set is empty.
Likewise, the boundary of a homoclinic canonical strip $\Omega$
consists of, apart from $\0$, a homoclinic separatrix $\Gamma$
enclosing it and possibly some others, all of them less than
$\Gamma$ in the $\prec$-ordering, when we again call $\Omega$
\emph{elementary} if this last family is empty. Note that is quite
possible for a canonical strip to be elementary, but at least one
heteroclinic canonical strip cannot be elementary (otherwise
$\Phi$ would have no homoclinic separatrices, and consequently all
its regular orbits would be heteroclinic, contradicting
Proposition~\ref{atractorestable}(i)).

\begin{remark}
 \label{elementary}
 The following statements are easy to prove:
 \begin{itemize}
 \item a heteroclinic canonical strip is elementary if and only if it is
  solid;
 \item a homoclininic canonical strip is elementary if and only
 if the restriction of $\Phi$ to its closure is topologically
 equivalent to the flow induced by the ``elliptic vector field''
 $f_4(x,y)= (x^2-2xy, xy-y^2)$ on the union set $A_4'$ of all
 orbits intersecting the diagonal arc $\{(x,x), 0\leq x\leq 1\}$.
 \end{itemize}
\end{remark}

\begin{remark}
 \label{ell-hyp}
If a regular homoclinic separatrix $\Gamma$ is minimal, that is,
$E(\Gamma)$ is an elementary homoclinic canonical strip, then
there is an elliptic sector intersecting $E(\Gamma)$
(Remark~\ref{elementary}). Thus, due to Remark~\ref{FSDP}, if $\0$
is not positively stable, and the finite sectorial decomposition
property holds, then the decomposition must include both an
elliptic and a hyperbolic sector.
\end{remark}

There are two natural ways to associate an orbit from
$\mathcal{X}$ to each canonical strip $\Omega$ of $\Phi$. Firstly,
$\gamma'(\Omega)$ will denote the orbit from $\mathcal{X}$
included in $\Omega$. Next, $\gamma(\Omega)$ will denote (when
$\Omega$ is homoclinic) the separatrix $\Gamma\subset\Bd\Omega$
enclosing $\Omega$,  and (when $\Omega$ is heteroclinic) the
heteroclinic separatrix $\Gamma\subset\Bd\Omega$ whose
$\omega$-point $w$ (in $C$, and then in $A$) satisfies $v<w$, $v$
being the $\omega$-point of $\gamma'(\Omega)$.
Note that $\mathcal{X}$ consists of all orbits
$\gamma(\Omega),\gamma'(\Omega)$ together with $\0$. Also, observe
that $\gamma'(\Omega)$ decomposes $\Omega$ into two components
$\Omega_l$ and $\Omega_u$, $\Omega_u$ being the component of
$\Omega\setminus \gamma'(\Omega)$ including $\gamma(\Omega)$ in
its boundary (an ambiguity arises in the case $\Omega=R\setminus
\Sigma$, where $\Omega_u$ consists of the orbits whose
$\omega$-points are greater than the $\omega$-point of
$\gamma'(\Omega)$).

\begin{lemma}
 \label{paridad}
 Let $\Omega$ be a canonical strip and let
 $\Gamma$ be a regular orbit in $\Bd\Omega$. Then $\Gamma$ has
 the same parity as $\gamma'(\Omega)$ if and only if either
 $\Gamma=\gamma(\Omega)$ or $\Gamma\in \Bd\Omega_l$.
\end{lemma}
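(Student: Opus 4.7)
My plan is to analyze the traces of the orbits of $\Omega$ on the small circle $C$ to determine, for each boundary orbit $\Gamma$, both its side ($\Bd\Omega_l$ or $\Bd\Omega_u$) and its parity. The argument splits naturally according to whether $\Omega$ is heteroclinic or homoclinic (Remark~\ref{h-h}), and, inside each case, according to whether $\Gamma$ is heteroclinic or homoclinic.

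First I parametrize the orbits of $\Omega$ continuously. By Proposition~\ref{canonical} and Lemma~\ref{controlado}, any two distinct orbits in $\Omega$ bound a solid sub-strip of $\Omega$, from which it follows that the orbits of $\Omega$ can be parametrized continuously by a real parameter $s$, and that the $\omega$-point (and, when $\Omega$ is homoclinic, also the $\alpha$-point) of each orbit varies continuously and monotonically along $C$ as $s$ varies. Write $v:=C\cap\gamma'(\Omega)$ for the $\omega$-point of $\gamma'(\Omega)$. The set of $\omega$-points of orbits of $\Omega$ is thus an arc $\tilde\Omega\subset C$ containing $v$ in its interior; the definition of $\gamma(\Omega)$ matches the sub-arc $\tilde\Omega^+:=\{w\in\tilde\Omega:w>v\text{ in }A\}$ with the component $\Omega_u$. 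So, for a regular boundary orbit $\Gamma$ of $\Omega$, membership of $\Gamma$ in $\Bd\Omega_l$ or $\Bd\Omega_u$ is decided by whether $\omega$-points of orbits $\Gamma_n\subset\Omega$ converging to $\Gamma$ eventually lie in $\tilde\Omega\setminus\tilde\Omega^+$ or in $\tilde\Omega^+$.

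Next I determine parity. If $\Gamma$ is heteroclinic, then $\Gamma$ is even by convention, and a direct inspection of the (at most) two heteroclinic boundary orbits shows the statement holds. If $\Gamma$ is homoclinic, then $\Gamma\cap C=\{v_\Gamma,w_\Gamma\}$, and the geometric key observation is that orbits of $\Omega$ approach $\Gamma$ from a fixed side of $E(\Gamma)$: from outside $E(\Gamma)$ when $\Omega$ is heteroclinic or when $\Omega$ is homoclinic and $\Gamma=\gamma(\Omega)$, and from inside $E(\Gamma)$ when $\Omega$ is homoclinic and $\Gamma\in\mathcal{G}$. In the outside-approach case, the $\omega$-points of the approaching orbits converge to $w_\Gamma$ along the arc $C_2^\Gamma=C\setminus C_1^\Gamma$, and the direction of this approach on $A$ is determined by the orientation of $\Gamma$: using the definition of positivity, if $\Gamma$ is positive, then approaching $w_\Gamma$ from $C_2^\Gamma$ means approaching $w_\Gamma$ from the counterclockwise side, and if $\Gamma$ is negative, from the clockwise side. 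Combining this with the description of the order on $A$ (CCW-induced if $\Theta$ is counterclockwise, CW-induced otherwise), one reads off that $\Gamma$ is positive exactly when $w_\Gamma<v$ in $A$, and negative exactly when $w_\Gamma>v$ in $A$; equivalently, using ``positive iff even when $\Theta$ is counterclockwise'', the parities of $\Gamma$ and of $\gamma'(\Omega)$ agree precisely when $\Gamma\in\Bd\Omega_l$, and disagree precisely when $\Gamma\in\Bd\Omega_u\setminus\{\gamma(\Omega)\}$. The inside-approach case is handled symmetrically, with $v_\Gamma$ and $w_\Gamma$ exchanging roles and the analogous conclusions matching the claim that $\gamma(\Omega)$ always shares the parity of $\gamma'(\Omega)$ and that every element of $\mathcal{G}$ lies in $\Bd\Omega_l$ with the same parity as $\gamma'(\Omega)$.

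The main obstacle is the last bookkeeping step: translating the geometric fact ``orbits of $\Omega$ approach $\Gamma$ from a specific side of $E(\Gamma)$'' into the combinatorial statement about the positions of $v_\Gamma$ and $w_\Gamma$ in $A$. This requires tracking simultaneously the fixed orientation $\Theta$, the position of the distinguished $\omega$-point $q$ of $\Sigma$ and of the small auxiliary arc $J'$ on $C$, and the positivity of $\Gamma$. Nevertheless, the topological picture is rigid enough that every configuration is forced, so no subtle case enumeration is needed beyond what is sketched above.
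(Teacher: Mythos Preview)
Your outline identifies the right objects — the trace of the $\omega$-points on $C$, the two sides $\Omega_l,\Omega_u$ of $\gamma'(\Omega)$, and the inside/outside dichotomy for $E(\Gamma)$ — but the central step is asserted rather than proved. You write that ``the definition of $\gamma(\Omega)$ matches the sub-arc $\tilde\Omega^+$ with the component $\Omega_u$'', and later that ``the direction of this approach on $A$ is determined by the orientation of $\Gamma$'', deferring the justification to a final ``bookkeeping step'' that you then wave away. But these assertions are precisely the content of the lemma. Concretely: knowing that $\gamma(\Omega)$ has $\omega$-point $w>v$ does \emph{not} by itself tell you that orbits of $\Omega_u$ have $\omega$-points on the $>v$ side of $\tilde\Omega$. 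An orbit in $\Omega_u$ near $\gamma(\Omega)$ follows $\gamma(\Omega)$ into the disk bounded by $C$ near $w$, but since $\mathbf 0$ is not positively stable it may well exit again and re-enter with a last crossing elsewhere; equivalently, the endpoint of the arc $\tilde\Omega$ on the $\Omega_u$-side could a priori be some $q_k$ or $p_k$ rather than $w$. Ruling this out is exactly what requires work.

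The paper's proof takes a different route that avoids this difficulty. Rather than trying to set up a global order-preserving correspondence between strip parameter and $\omega$-points, it argues locally between \emph{consecutive} homoclinic boundary orbits: if $\Gamma_k$ were negative and $\Gamma_{k+1}$ positive, one traps a forward semi-orbit inside the disk bounded by $C$ and reaches a contradiction with the heteroclinic structure of $\Omega$; similar transversal/semi-orbit arguments pin down where $v$ can sit relative to the $q_k$'s. The upshot is a forced pattern (positives first, then negatives, with $v$ at the transition), from which the $\Bd\Omega_l/\Bd\Omega_u$ dichotomy can be read off. If you want to salvage your approach, you will need arguments of this type to justify your two key assertions; the continuity/injectivity of the $\omega$-point map, which you do have via Lemma~\ref{controlado}, is not enough on its own.
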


\begin{proof}
We present the proof under the hypothesis that $\Omega$ is a
heteroclinic strip whose boundary includes two heteroclinic
orbits, $\gamma(\Omega)$ and $\gamma''(\Omega)$. The case when
$\Omega$ is heteroclinic but $\Sigma$ is the only heteroclinic
separatrix of $\Phi$, and the homoclinic case, can be dealt with
in analogous fashion. We will also assume that the fixed
orientation $\Theta$ is counterclockwise so the even (respectively
odd) homoclinic orbits coincide with the positive (respectively
negative) ones.

If $\Omega$ is elementary, then there is nothing to prove: both
$\Gamma$ and $\gamma'(\Omega)$ are heteroclinic and consequently
even. Otherwise, let $\Gamma_1, \ldots, \Gamma_j$ be the maximal
homoclinic orbits in $\Bd \Omega$, where these orbits are labelled
in such a way that if $q_1, \ldots, q_j$ are the corresponding
$\omega$-points, then $q_1 < \cdots < q_j$ (in $A$). The
corresponding $\alpha$-points will be denoted by $p_k$, $1\leq
k\leq j$. Finally, let $u$, $v$ and $w$ be the $\omega$-points of
$\gamma''(\Omega)$, $\gamma'(\Omega)$ and $\gamma(\Omega)$,
respectively (so $u<v<w$). We can assume without loss of
generality that there are small subarcs of $C$, neighbouring all
these points, which are transversal to the flow.

Let $1 \leq k \leq j-1$. We claim that it is not possible that
$\Gamma_k$ is negative and $\Gamma_{k+1}$ is positive. Assume by
contradiction $q_k<p_k<p_{k+1}<q_{k+1}$. Find points
$p_k<b<b'<p_{k+1}$ in $C$, very close to $p_k$ and $p_{k+1}$,
respectively, so that $T=\{t\in C:p_k\leq t\leq b\}$, $T'=\{t\in
C:b'\leq t\leq p_{k+1}\}$ are transversal to the flow. Also, let
$Q=\{t\in C:b\leq t\leq b'\}$. Since $\Gamma_{k}$ is negative,
backward semi-orbits starting from points from $T\setminus
\{p_k\}$ enter the disk $D$ enclosed by $C$ and, since
$\Gamma_{k+1}$ is positive, then they escape from the disk through
$Q$. Accordingly, take a a decreasing sequence
$(b_n)_{n=1}^\infty$ in $T\cap \Omega$ tending to $p_k$ and find
maximal semi-orbits $\varphi(a_n,b_n)$ fully included in $D$, when
observe that the sequence $(a_n)_n$, besides lying in $Q$, is
increasing. Call $a^*$ its limit. Clearly, $a^*\in \Cl\Omega$.
Since the full forward orbit $\varphi(a^*,+)$ lies in $D$, and
$\Gamma_k$ and $\Gamma_{k+1}$ are consecutive, we easily get that,
in fact, $a^*\in \Omega$ and there is a solid heteroclinic strip
$S$ neighbouring $a^*$. This is impossible because points $b_n$
belong to $S$ if $n$ is large enough, hence $\Gamma_k\subset \Bd
S$.

Further, if $\Gamma_k$ and $\Gamma_{k+1}$ have the same sign, then
$\gamma'(\Omega)$ cannot lie between them. In fact, assume, say,
$q_k<p_k<v<q_{k+1}<p_{k+1}$,  take $b$, $T$ and $(b_n)_n$ as
before but consider now $Q=\{t\in C:b\leq t\leq v\}$. Find
similarly the points $a_n$ and $a^*$ in $Q$ to obtain the
analogous contradiction. We prove that if $\Gamma_1$ is positive,
then $\gamma'(\Omega)$ cannot lie between $\gamma''(\Omega)$ and
$\Gamma_1$, and if $\Gamma_j$ is negative, then $\gamma'(\Omega)$
cannot lie between $\Gamma_j$ and $\gamma(\Omega)$, in the same
way.

As a conclusion, we get that either (a) all orbits $\Gamma_k$ are
positive and $\gamma'(\Omega)$ lies between $\Gamma_j$ and
$\gamma(\Omega)$, or (b) all orbits $\Gamma_k$ are negative and
$\gamma'(\Omega)$ lies between $\gamma''(\Omega)$ and $\Gamma_1$,
or (c) there is $1\leq m\leq j-1$ such that all orbits $\Gamma_k$
with $k\leq m$ are positive, all orbits with $k>m$ are negative,
and $\gamma'(\Omega)$ lies between $\Gamma_m$ and $\Gamma_{m+1}$.
This implies the lemma.

\end{proof}

We say that a finite, non-empty set $V$ of vectors of positive
integers is \emph{complete} when, for any $(i_1,\ldots,i_l)\in V$,
we have $(i_1,\ldots,i_m)\in V$ for every $1\leq m\leq l$, and
$(i_1,\ldots,i_{l-1}, i)\in V$ for every  $1\leq i\leq i_l$. If
$v\in V$, then we denote by $\lambda(v)$  the largest number $j$
such that $(v,j)\in V$, $\lambda(v)=0$ meaning that there is no
$j$ such that $(v,j)\in V$. Likewise, $\lambda(\emptyset)$ stands
for the largest number $t$ such that $(t)\in V$. Of course we
should write $\lambda_V$ instead of $\lambda$ (and similarly
$\rho_L,\sigma_L$ instead of $\rho,\sigma$ below) to emphasize
that this map depends on $V$, but hopefully this will not lead to
confusion.

Let $\mathbb{M}=\{n/3: n=0,1,2,\ldots\}$.

\begin{definition}
 \label{feasible}
 We say that a set $L$ of vectors of numbers from
 $\mathbb{M}$ is \emph{feasible} with  \emph{base} a complete set
 $V$ if its elements have the structure $(v,k)$, with $v\in V$ and
 $k\in \mathbb{M}$, and the following conditions hold:
 \begin{itemize}
  \item[(i)] for each $(i)\in V$ of length 1 there are exactly two elements in
   $L$: $(i,\lambda(i)+1$) and $(i,s+2/3)$ for some integer
   $s=\sigma(i)$, $0\leq s\leq \lambda(i)$;
  \item[(ii)] for each $v\in V$ of length at least 2 there are exactly
   four elements in $L$: $(v,0)$, $(v,\lambda(v)+1)$,
   and $(v,r+1/3)$, $(v,s+2/3)$ for some integers
   $r=\rho(v)$, $s=\sigma(v)$, $0\leq r\leq s\leq \lambda(v)$;
  \item[(iii)] $(i,\lambda(i)+2/3)$ and $(i+1,2/3)$ cannot
   simultaneously belong to $L$ (where we mean $i+1=1$ when
   $i=\lambda(\emptyset)$);
  \item[(iv)] if $\lambda(v)=1$, then
   $(v,1/3)$, $(v,5/3)$, $(v,1,1/3)$ and
   $(v,1,\lambda(v,1)+2/3)$ cannot simultaneously belong to $L$.
 \end{itemize}
\end{definition}

Note that property (iii) above implies that $\lambda(i)\geq 1$ for
some $i$, hence $V$ contains at least one sequence of length $2$.
If $V$ is the base of a feasible set $L$, then we assign a parity
(even or odd) to each $v\in V$ as follows. All vectors of length 1
in $V$ have parity even. If $(i)\in V$, then we assign even or odd
parity to $(i,j)$ depending on whether $j\leq \sigma(i)$ or not.
Inductively, once the parity of $v\in V$ is established, we assign
to $(v,j)$ the same parity as $v$, or the other one, depending on
whether $\rho(v)<j\leq \sigma(v)$ or not. Finally, if $w=(v,h)\in
L$, then we say that $w$ is an \emph{$\alpha$-vector} if either
$v$ is even and $h=0$ or $h=\rho(v)+1/3$, or $v$ is odd and
$h=\lambda(v)+1$ or $h=\sigma(v)+2/3$. Otherwise, we say that $w$
is a \emph{$\omega$-vector}.

We next explain how to associate canonically a feasible set $L$ to
$\Phi$. To construct the base $V$ we proceed inductively,
biunivocally associating to each canonical strip $\Omega$ (and the
$\omega$-point of $\gamma(\Omega)$) a vector from $V$. First of
all, order the heteroclinic canonical strips of $\Phi$ as
$\Omega_1,\ldots, \Omega_t$, this meaning that the corresponding
$\omega$-points $q_i$ of the orbits $\gamma(\Omega_i)$, $1\leq
i\leq t$, satisfy $q_1<\ldots<q_t$. Then the $1$-length vectors
from $V$ will be those of the type $(i)$, $1\leq i\leq t$. If,
additionally, the strip $\Omega_i$ is not elementary, and
$\Omega_{i,1},\dots,\Omega_{i,j}$ are the homoclinic canonical
strips $\Omega$ such that $\gamma(\Omega)\subset \Bd \Omega_i$
(again assuming $q_{i,1}<\ldots<q_{i,j}$ for their corresponding
$\omega$-points), then we add the $2$-vectors $(i,k)$ to $V$,
$1\leq k\leq j$. In general, if a vector $v$ has been added to
$V$, with corresponding canonical strip $\Omega_v$, and $\Omega_v$
is not elementary, then we consider as before the homoclinic
canonical strips $\Omega$ such that $\gamma(\Omega)\subset \Bd
\Omega_v$, call them $\Omega_{v,1},\dots,\Omega_{v,j'}$ (so that
$q_{v,1}<\ldots<q_{v,j'}$ for the corresponding $\omega$-points),
and add the vectors $(v,k)$, $1\leq k\leq j'$, to $V$. Clearly,
the set $V$ so defined is complete.

Now we define $L$ (and biunivocally associate to its vectors all
points from $X$). We just must explain how to choose the numbers
$\sigma(i)$ and the pairs $\rho(v),\sigma(v)$ in
Definition~\ref{feasible}(i) and (ii), and then check that (iii)
and (iv) hold. As for the first numbers, let (with the notation of
the previous paragraph) $1\leq i\leq t$. Then $s=\sigma(i)$ is the
largest number such that $q_{i,s}<y_i$, $y_i$ being the
$\omega$-point of $\gamma'(\Omega_i)$ (or $s=0$ if $\Omega_i$ is
elementary or no such number exists, that is, $y_i<q_{i,j}$ for
all $j$). Also, we redefine the points $y_i$ and $q_i$ as
$c_{i,\sigma(i)+2/3}$ and $c_{i,\lambda(i)+1}$, respectively. In
the general case we denote by $x_v$ and $y_v$ the $\alpha$- and
$\omega$-points of $\gamma'(\Omega_v)$ when this orbit is even,
reversing the notation when $\gamma'(\Omega_v)$ is odd, and take
$r=\rho(v)$ and $s=\sigma(v)$ as the largest numbers satisfying
$q_{v,r}<x_v$ and $q_{v,s}<y_v$, respectively (or $r=s=0$ when
$\Omega_v$ is elementary, and $r=0$ or $s=0$ when the
corresponding number does not exist). Finally, we redenote $x_v$
and $y_v$ as $c_{v,\rho(v)+1/3}$ and $c_{v,\sigma(v)+2/3}$, while
$c_{v,0}$ and $c_{v,\lambda(v)+1}$ stand for the $\alpha$- and
$\omega$-points (or conversely in the odd case) of
$\gamma(\Omega_v)$.

We claim that (iii) in Definition~\ref{feasible} holds. Indeed if,
say, both $(i,\lambda(i)+2/3)$ and $(i+1,2/3)$ belong to $L$ for
some $i$, the orbits $\gamma'(\Omega_i)$ and
$\gamma'(\Omega_{i+1})$ would bound, together with $\0$, a solid
strip (Remark~\ref{problema}). Since this strip includes the
separatrix $\gamma(\Omega_i)$, we get a contradiction.

Assume now that Definition~\ref{feasible}(iv) does not hold, that
is, there is $v\in V$ with $\lambda(v)=1$ such that all vectors
$(v,1/3)$, $(v,5/3)$, $(v,1,1/3)$ and $(v,1,\lambda(v,1)+2/3)$
belong to $L$. Then, again by Remark~\ref{problema}, the orbits
$\gamma'(\Omega_v)$, $\gamma'(\Omega_{v,1})$ bound, together with
$\0$, a solid strip including $\gamma(\Omega_{v,1})$, which is
impossible.

Thus we have shown that $L$ is feasible. Although $L$  has been
constructed with the help of the circle $C$, it depends only on
$\Theta$ and $\Sigma$. We call it the \emph{canonical feasible
set} associated to $\Phi$, the orientation $\Theta$ and the
separatrix $\Sigma$.

As some examples, we present in Tables~\ref{tableEx1}
and~\ref{tableEx2} the feasible sets associated to the flows on
Figure~\ref{FSDnobasta} under the counterclockwise orientation.

\begin{table}
 \renewcommand{\arraystretch}{1.5}
 \centering
\begin{tabu}{c|[1pt]c}
 $V$ & $L$ \\
 \tabucline[1pt]{-}
 $(1)$ & $(1,2)$, $(1,\frac{5}{3})$  \\
 \hline
 $(1,1)$ & $(1,1,0)$, $(1,1,2)$, $(1,1,\frac{1}{3})$, $(1,1,\frac{2}{3})$\\
 \hline
 $(1,1,1)$ & $(1,1,1,0)$, $(1,1,1,1)$, $(1,1,1,\frac{1}{3})$, $(1,1,1,\frac{2}{3})$\\
 \hline
\end{tabu}
\caption{\label{tableEx1} The elements of the feasible set $L$ and
its base $V$ from the left flow of Figure~\ref{FSDnobasta}.}
\end{table}

\begin{table}
 \renewcommand{\arraystretch}{1.5}
 \centering
\begin{tabu}{c|[1pt]c}
 $V$ & $L$ \\
 \tabucline[1pt]{-}
 $(1)$ & $(1,2)$, $(1,\frac{2}{3})$\\
 \hline
 $(1,1)$ & $(1,1,0)$, $(1,1,1)$, $(1,1,\frac{1}{3})$, $(1,1,\frac{2}{3})$\\
 \hline
 $(2)$ & $(2,1)$, $(2,\frac{2}{3})$\\
 \hline
 $(3)$ & $(3,2)$, $(3,\frac{5}{3})$\\
 \hline
 $(3,1)$ & $(3,1,0)$, $(3,1,1)$, $(3,1,\frac{1}{3})$, $(3,1,\frac{2}{3})$\\
 \hline
 \end{tabu}
\caption{\label{tableEx2} The elements of the feasible set $L$ and
its base $V$ from the right flow of Figure~\ref{FSDnobasta}
($\Sigma$ is the ``upper'' heteroclinic separatrix).}
\end{table}

\begin{remark}
 \label{ejemplofacil}
 The simplest feasible set
  $$
  L=\{(1,5/3),(1,2),(1,1,0),(1,1,1/3),(1,1,2/3),(1,1,1)\}
  $$
 (equivalent to
  $$
  L=\{(1,2/3),(1,2),(1,1,0),(1,1,1/3),(1,1,2/3),(1,1,1)\}
  $$
 after reversing the orientation) correspond to the case when
 there are exactly three separatrices (one heteroclinic,  another
 one regular homoclinic, and the singular point), which  occurs
 when ``$\prec$'' is a total ordering in $\mathcal{H}$ ($\0$
 becoming an elliptic saddle for the flow).
\end{remark}

Observe that the bijection from $L$ to $X$ given by $w\mapsto c_w$
preserves \emph{orders} (when the lexicographical order is used in
$L$), \emph{orbits} (that is, two points $c_w$ and $c_{w'}$
belongs to the same orbit if and only if $w=(v,h)$ and $w'=(v,h')$
for some $v\in V$  and $h+h'$ is an integer) and \emph{directions}
(that is, $w$ is a $\omega$-vector if and only if $c_w$ is a
$\omega$-point; this follows from Lemma~\ref{paridad}, which
implies that the parity of $v\in V$ is the same as that of
$\gamma(\Omega_v)$ and $\gamma'(\Omega_v)$). There are many
feasible sets $L'$ which can be bijectively mapped onto $X$ so
that ordering is preserving: since both orderings are total, one
just needs that both cardinalities of $L$ and $L'$ are the same.
As it turns out, if orbits are preserved, then directions are
preserved as well:

\begin{lemma}
 \label{redundante}
 If $L'$ is feasible, and there is a bijection $\psi:L'\to X$
 preserving orders and orbits, then $L'=L$.
\end{lemma}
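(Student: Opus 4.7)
Composing $\psi$ with the canonical bijection $L \to X$, $w \mapsto c_w$, which also preserves orders and orbits, we obtain an order- and orbit-preserving bijection $\phi \colon L' \to L$. The plan is to prove $\phi$ is the identity (whence $L'=L$) by showing that the feasible set structure $(V,\lambda,\rho,\sigma)$ can be recovered unambiguously from the order and orbit data on $X$ alone; applied to the common data of $L$ and $L'$, the recovery must reproduce the parameters of each set, forcing them to coincide.

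First I recover the length-$1$ part: each length-$1$ $(i)\in V$ contributes exactly two singletons to $X$, namely $(i,\sigma(i)+2/3)$ and $(i,\lambda(i)+1)$, the latter being the last element of the contiguous block $B(i)\subset L$ of elements whose first component is $i$. Consequently, listing the singletons in $X$ as $s_1<s_2<\ldots<s_{2t}$, the singletons $s_{2i-1},s_{2i}$ correspond to $(i)$, and $B(i)$ comprises the elements of $L$ lying strictly after $s_{2(i-1)}$ and up to (and including) $s_{2i}$. Condition (iii) of feasibility precisely excludes the unique configuration that would place this assignment in conflict with feasibility at a block boundary. Having located $B(i)$, I read off $\lambda(i)$ as the number of outermost pair orbits inside $B(i)$ and $\sigma(i)$ as the number of those pairs preceding $s_{2i-1}$.

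For each $v\in V$ of length $\geq 2$, I recurse inside the sub-block $B(v)$, whose outer pair $\{(v,0),(v,\lambda(v)+1)\}$ is given by its first and last elements. To distinguish the inner pair $\{(v,\rho(v)+1/3),(v,\sigma(v)+2/3)\}$ from the outer pairs of the sub-sub-blocks among the pair orbits nested directly inside, I classify the pair orbits of $L$ by bottom-up induction on the nesting forest (which is non-crossing by planarity of the flow): a leaf is declared type $I$ (inner); a non-leaf pair is type $O$ (outer) if it has exactly one type-$I$ child, and type $I$ otherwise. An induction on nesting depth verifies that in any feasible set this rule coincides with the true labeling, because every outer pair has its own inner pair as its unique type-$I$ direct child (the remaining direct children being type-$O$ sub-block outer pairs), while every inner pair has as direct children only type-$O$ sub-block outer pairs, namely those of the sub-blocks it encloses. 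Once types are determined, $\lambda(v), \rho(v), \sigma(v)$ are read off from the number and relative positions of the type-$O$ direct children of the outer and inner pairs of $B(v)$.

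The main obstacle is ensuring that the bottom-up rule does not assign types inconsistent with feasibility. The only configuration in which this could happen is a chain $P_1\supset P_2\supset P_3\supset P_4$ in which each $P_j$ has only $P_{j+1}$ as a direct child in the nesting forest and $P_4$ is a leaf; unwinding the rule on such a chain yields exactly the parameters $\lambda(v)=1$, $\rho(v)=0$, $\sigma(v)=1$, $\rho(v,1)=0$, $\sigma(v,1)=\lambda(v,1)$, precisely the configuration forbidden by condition (iv). Thus (iii) and (iv), holding simultaneously for $L$ and $L'$, rule out the only potential ambiguities, and the deterministic recovery applied to the shared order and orbit data returns identical bases $V$ and identical parameters $\lambda,\rho,\sigma$; consequently $L'=L$.
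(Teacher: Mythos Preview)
Your approach is correct and takes a genuinely different route from the paper. The paper argues by infinite descent using the geometry of $\mathcal{X}$: if an outer pair of $L'$ were mapped to a non-separatrix orbit $\gamma'(\Omega_v)$ of $\Phi$, then its inner pair would have to land on some separatrix $\gamma(\Omega_{v,k})$, whose companion non-separatrix $\gamma'(\Omega_{v,k})$ would in turn receive another outer pair of $L'$, and so on indefinitely---contradicting the finiteness of $\mathcal{X}$. Your approach instead defines a deterministic bottom-up labeling on the nesting forest of pair orbits and shows it recovers the true outer/inner labels in any feasible set; since the rule depends only on the shared order/orbit data, $L$ and $L'$ receive identical labels and hence coincide. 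Your argument is purely combinatorial and self-contained, whereas the paper's leans on the $\prec$-structure of the flow.

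One point of exposition: your final paragraph invoking conditions (iii) and (iv) is unnecessary and somewhat misleading. The inductive verification you already gave---every outer pair has its own inner pair as its unique type-$I$ direct child, and every inner pair has only type-$O$ direct children---goes through without exception by well-founded induction from the leaves, since an outer pair always has its inner pair as a child and is therefore never a leaf. No chain configuration causes the rule to mislabel; in particular the chain $P_1\supset P_2\supset P_3\supset P_4$ you describe is labeled $O,I,O,I$ by the rule, which is correct. Conditions (iii) and (iv) concern realizability of a feasible set by a flow (preventing certain solid strips from containing separatrices), not uniqueness of the feasible-set encoding of given order/orbit data; they play no role here.
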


\begin{proof}
Let $V'$ the base of $L'$ and redenote $\lambda_{V'}=\lambda'$,
$\rho_{L'}=\rho'$, $\sigma_{L'}=\sigma'$. Since $\psi$ preserves
orbits, it maps vectors $(i',\lambda'(i')+1)$ and
$(i',\sigma'(i')+2/3)$  to $\omega$-points of heteroclinic orbits,
and pairs $(v',0)$ and $(v',\lambda'(v')+1)$, as well as pairs
$(v',\rho'(v')+1/3)$ and $(v',\sigma'(v')+2/3)$, to pairs of
points of homoclinic orbits. Since orders are preserved as well,
we get that vectors $(i',\lambda'(i')+1)$ are precisely those
mapped to heteroclinic separatrices, and deduce that vectors of
lengths 1 and 2 of $V$ and $V'$, as well as vectors of length 2 of
$L$ and $L'$, are the same. Now, as the reader will easily
convince himself, to prove the lemma we just have to show this:
pairs $(v',0)$ and $(v',\lambda'(v')+1)$ are exactly those mapped
to homoclinic separatrices.

Assume, to arrive at a contradiction, that $(v',0)$ and
$(v',\lambda'(i')+1)$ are mapped to one of the orbits
$\gamma'(\Omega_v)$ of $\mathcal{X}$. Since $\mathcal{X}$ has no
orbits between $\gamma'(\Omega_v)$ and the orbits
$\gamma(\Omega_{v,k})$  (regarding the order ``$\prec$''), it is
clear that $(v',\rho'(v')+1/3)$ and $(v',\sigma'(v')+2/3)$ must be
mapped to one of the orbits $\gamma(\Omega_{v,k})$ (in particular,
$v$ cannot have maximal length in $V$). Similarly, if
$(v',\rho'(v')+1/3)$ and $(v',\sigma'(v')+2/3)$ are mapped to an
orbit $\gamma(\Omega_w)$, the pair which is mapped to
$\gamma'(\Omega_w)$ must be of the type $(w',0)$ and
$(w',\lambda'(w')+1)$, because the orbit corresponding to
$(w',\rho'(w')+1/3)$ and $(w',\sigma'(w')+2/3)$ is $\prec$-less
than that corresponding to $(w',0)$ and $(w',\lambda'(w')+1)$, and
there are no orbits of $\mathcal{X}$ between $\gamma(\Omega_w)$
and $\gamma'(\Omega_w)$. We could thus proceed indefinitely,
contradicting the finiteness of $\mathcal{X}$.
\end{proof}

\begin{proof}[Proof of Theorem~\ref{principal}]
 The statement (i)$\Rightarrow$(ii) is obvious (recall
Proposition~\ref{atractorestable}).

 Let us show (ii)$\Rightarrow$(iii). Fix small circles $C,C'$ around
 $\0$ and let $\mu:C\to C'$ be a homeomorphism preserving
 orbits for $\xi$. Use the hypothesis to find
 heteroclinic separatrices $\Sigma$ and $\Sigma'$ such that
 $\xi(\Sigma)=\Sigma'$, fix $\Theta$ as the counterclockwise
 orientation, and take $\Theta'$ as the counterclockwise or the
 clockwise orientation depending on whether $\mu$ preserves or
 reverses the orientation. Construct the canonical
 feasible sets $L$ and $L'$ associated to them,
 and the corresponding bijections $\psi:L\to X$, $\psi':L'\to X'$ to
 the configurations of $\mathcal{X}$ and $\mathcal{X'}$ preserving
 orders, orbits and directions. Although the hypothesis does not
 state that $\mu$ preserves directions for $\xi$,
 we get that $\mu^{-1}\circ \psi':L'\to X$ preserves orders and
 orbits anyway. Now Lemma~\ref{redundante} applies and (iii)
 follows.

 Finally, to prove (iii)$\Rightarrow$(i), let
 again $C,C'$ be small circles around $\0$, denote the
 configurations of $\mathcal{X}$ and $\mathcal{X'}$ in these
 circles by $X$ and $X'$, and find arcs $A\subset C$ and $A'\subset
 C'$ containing all points of $X$ and $X'$ and having $q$
 and $q'$, the $\omega$-points of $\Sigma$ and $\Sigma'$, as their
 upper endpoints (after using the respective orientations $\Theta$
 and $\Theta'$). According to the hypothesis, there are a feasible
 set $L$ and bijections $\psi:L\to X$, $\psi':L\to X'$ preserving
 orders, orbits and directions, and hence a bijection $\xi:\mathcal{X}\to
 \mathcal{X'}$ and a homeomorphism $\mu:C\to C'$ preserving
 orbits and directions for $\xi$. Then, as explained in
 Subsection~\ref{extensiones}, there is a plane homeomorphism
 preserving the skeletons orbits, which
 turns out to preserve the flows directions as well. Hence
 $\Phi$ and $\Phi'$ are topologically
 equivalent by Theorem~\ref{markus}.
\end{proof}

%
%
%
%

\section{Proof of Theorem~\ref{ejemploanalitico}}

Let $0\leq s\leq j$ be non-negative integers. We define a
$C^1$-vector field $f_{s,j}$ as follows. We start from
$f(x,y)=(x(x^2-1),-y)$. As easily checked, the phase portrait of
(the associated local flow to) $f$ in the semi-band $[-1,1]\times
[0,\infty)$ (the only sector we are interested in) consists of
three singular points, the attracting node $\0$ and the saddle
points $(-1,0)$ and $(1,0)$, two horizontal orbits in the $x$-axis
going to $\0$ as time goes to $\infty$, and three vertical orbits
on the semi-lines $x=-1,0,-1$, each converging in positive time to
the corresponding singular point. All other orbits go to $\0$ as
$t\to\infty$. Next, let $\kappa(x)$ be a non-negative
$C^1$-function vanishing at points $x=-i/s$, $0\leq i\leq s$ (or
at the whole interval $[-1,0]$ if $s=0$), at points $x=i/(j-s)$,
$0\leq i\leq j-s$ (or at the whole interval $[0,1]$ if $s=j$), and
at no other points. Then we define
$f_{s,j}(x,y)=(\kappa(x)+y^2)f(x,y)$, thus adding new singular
points in the $x$-axis and leaving unchanged the upper orbits.
Figure~\ref{campof} exhibits the phase portrait of $f_{s,j}$ for
different values of $s$ and $j$.

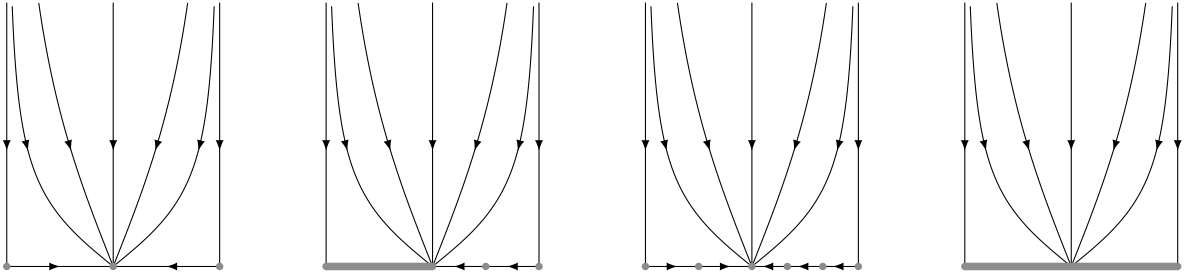
\begin{figure}
\centering
\begin{tikzpicture}[scale=1.4]

 \draw (-1.0,0.0) -- (1.0,0.0);
 \draw (-1.0,0.0) -- (-1.0,2.5);
 \draw (1.0,0.0) -- (1.0,2.5);
 \draw (0.0,0.0) -- (0.0,2.5);

 \draw[latex-] (-1.0, 1.09561) -- (-1.0, 1.13498);
 \draw[latex-] (1.0, 1.09561) -- (1.0, 1.13498);
 \draw[latex-] (0.0, 1.09561) -- (0.0, 1.13498);
 \draw[latex-] (-0.5, 0.0) -- (-0.51, 0.0);
 \draw[latex-] (0.5, 0.0) -- (0.51, 0.0);


    \draw[smooth,samples=100,domain=0.0:0.95] plot(\x,{((2.5 * sqrt(1 - (0.95)^2))/0.95) * (\x/sqrt(1 - \x^2))});
    \draw[latex-] (0.8, 1.09561) -- (0.81, 1.13498);

    \draw[smooth,samples=100,domain=0.0:0.7] plot(\x,{((2.5 * sqrt(1 - (0.7)^2))/0.7) * (\x/sqrt(1 - \x^2))});

    \draw[latex-] (0.3947, 1.09564) -- (0.4047, 1.12876) ;

    \draw[smooth,samples=100,domain=0.0:0.7] plot(-\x,{((2.5 * sqrt(1 - (0.7)^2))/0.7) * (\x/sqrt(1 - \x^2))});

    \draw[latex-] (-0.3947, 1.09564) -- (-0.4047, 1.12876) ;

    \draw[smooth,samples=100,domain=0.0:0.95] plot(-\x,{((2.5 * sqrt(1 - (0.95)^2))/0.95) * (\x/sqrt(1 - \x^2))});
    \draw[latex-] (-0.8, 1.09561) -- (-0.81, 1.13498);

    \fill[color=gray!110!black] (0.0,0.0) circle (0.035);
    \fill[color=gray!110!black] (1.0,0.0) circle (0.035);
    \fill[color=gray!110!black] (-1.0,0.0) circle (0.035);


 \draw[xshift=3cm] (-1.0,0.0) -- (1.0,0.0);
 \draw[xshift=3cm] (-1.0,0.0) -- (-1.0,2.5);
 \draw[xshift=3cm] (1.0,0.0) -- (1.0,2.5);
 \draw[xshift=3cm] (0.0,0.0) -- (0.0,2.5);

 \draw[xshift=3cm, latex-] (-1.0, 1.09561) -- (-1.0, 1.13498);
 \draw[xshift=3cm, latex-] (1.0, 1.09561) -- (1.0, 1.13498);
 \draw[xshift=3cm, latex-] (0.0, 1.09561) -- (0.0, 1.13498);
 \draw[xshift=3cm, latex-] (0.2, 0.0) -- (0.21, 0.0);
 \draw[xshift=3cm, latex-] (0.7, 0.0) -- (0.71, 0.0);


 \draw[xshift=3cm   , smooth,samples=100,domain=0.0:0.95] plot(\x,{((2.5 * sqrt(1 - (0.95)^2))/0.95) * (\x/sqrt(1 - \x^2))});
 \draw[xshift=3cm   , latex-] (0.8, 1.09561) -- (0.81, 1.13498);
 \draw[xshift=3cm   , smooth,samples=100,domain=0.0:0.7] plot(\x,{((2.5 * sqrt(1 - (0.7)^2))/0.7) * (\x/sqrt(1 - \x^2))});

    \draw[xshift=3cm   , latex-] (0.3947, 1.09564) -- (0.4047, 1.12876) ;

    \draw[xshift=3cm   , smooth,samples=100,domain=0.0:0.7] plot(-\x,{((2.5 * sqrt(1 - (0.7)^2))/0.7) * (\x/sqrt(1 - \x^2))});

    \draw[xshift=3cm   , latex-] (-0.3947, 1.09564) -- (-0.4047, 1.12876) ;

    \draw[xshift=3cm   , smooth,samples=100,domain=0.0:0.95] plot(-\x,{((2.5 * sqrt(1 - (0.95)^2))/0.95) * (\x/sqrt(1 - \x^2))});
    \draw[xshift=3cm   , latex-] (-0.8, 1.09561) -- (-0.81, 1.13498);

 \draw[xshift=3cm, color=gray!110!black,line width=0.1cm] (-1.0,0.0) -- (0.0,0.0);
 
 \fill[xshift=3cm, color=gray!110!black] (-1.0,0.0) circle (0.035cm);
 \fill[xshift=3cm, color=gray!110!black] (0.0,0.0) circle (0.035);
 \fill[xshift=3cm, color=gray!110!black] (1/2,0.0) circle (0.035);
 \fill[xshift=3cm, color=gray!110!black] (1.0,0.0) circle (0.035);


 \draw[xshift=6cm   ] (-1.0,0.0) -- (1.0,0.0);
 \draw[xshift=6cm   ] (-1.0,0.0) -- (-1.0,2.5);
 \draw[xshift=6cm   ] (1.0,0.0) -- (1.0,2.5);
 \draw[xshift=6cm   ] (0.0,0.0) -- (0.0,2.5);

 \draw[xshift=6cm, latex-] (-1.0, 1.09561) -- (-1.0, 1.13498);
 \draw[xshift=6cm, latex-] (1.0, 1.09561) -- (1.0, 1.13498);
 \draw[xshift=6cm, latex-] (0.0, 1.09561) -- (0.0, 1.13498);

 \draw[xshift=6cm, latex-] (-0.2, 0.0) -- (-0.21, 0.0);
 \draw[xshift=6cm, latex-] (-0.7, 0.0) -- (-0.71, 0.0);
 \draw[xshift=6cm, latex-] (0.1, 0.0) -- (0.11, 0.0);
 \draw[xshift=6cm, latex-] (0.43, 0.0) -- (0.44, 0.0);
 \draw[xshift=6cm, latex-] (0.76, 0.0) -- (0.77, 0.0);


    \draw[xshift=6cm   , smooth,samples=100,domain=0.0:0.95] plot(\x,{((2.5 * sqrt(1 - (0.95)^2))/0.95) * (\x/sqrt(1 - \x^2))});
    \draw[xshift=6cm   , latex-] (0.8, 1.09561) -- (0.81, 1.13498);

    \draw[xshift=6cm   , smooth,samples=100,domain=0.0:0.7] plot(\x,{((2.5 * sqrt(1 - (0.7)^2))/0.7) * (\x/sqrt(1 - \x^2))});

    \draw[xshift=6cm   , latex-] (0.3947, 1.09564) -- (0.4047, 1.12876) ;

    \draw[xshift=6cm   , smooth,samples=100,domain=0.0:0.7] plot(-\x,{((2.5 * sqrt(1 - (0.7)^2))/0.7) * (\x/sqrt(1 - \x^2))});

    \draw[xshift=6cm   , latex-] (-0.3947, 1.09564) -- (-0.4047, 1.12876) ;

    \draw[xshift=6cm   , smooth,samples=100,domain=0.0:0.95] plot(-\x,{((2.5 * sqrt(1 - (0.95)^2))/0.95) * (\x/sqrt(1 - \x^2))});
    \draw[xshift=6cm   , latex-] (-0.8, 1.09561) -- (-0.81, 1.13498);

 \fill[xshift=6cm, color=gray!110!black] (-1.0,0.0) circle (0.035);
 \fill[xshift=6cm, color=gray!110!black] (-0.5,0.0) circle (0.035);
 \fill[xshift=6cm, color=gray!110!black] (0.0,0.0) circle (0.035);
 \fill[xshift=6cm, color=gray!110!black] (0.333,0.0) circle (0.035);
 \fill[xshift=6cm, color=gray!110!black] (0.666,0.0) circle (0.035);
 \fill[xshift=6cm, color=gray!110!black] (1.0,0.0) circle (0.035);


 \draw[xshift=9cm   ] (-1.0,0.0) -- (1.0,0.0);
 \draw[xshift=9cm   ] (-1.0,0.0) -- (-1.0,2.5);
 \draw[xshift=9cm   ] (1.0,0.0) -- (1.0,2.5);
 \draw[xshift=9cm   ] (0.0,0.0) -- (0.0,2.5);

 \draw[xshift=9cm, latex-] (-1.0, 1.09561) -- (-1.0, 1.13498);
 \draw[xshift=9cm, latex-] (1.0, 1.09561) -- (1.0, 1.13498);
 \draw[xshift=9cm, latex-] (0.0, 1.09561) -- (0.0, 1.13498);


    \draw[xshift=9cm   , smooth,samples=100,domain=0.0:0.95] plot(\x,{((2.5 * sqrt(1 - (0.95)^2))/0.95) * (\x/sqrt(1 - \x^2))});
    \draw[xshift=9cm   , latex-] (0.8, 1.09561) -- (0.81, 1.13498);

    \draw[xshift=9cm   , smooth,samples=100,domain=0.0:0.7] plot(\x,{((2.5 * sqrt(1 - (0.7)^2))/0.7) * (\x/sqrt(1 - \x^2))});

    \draw[xshift=9cm   , latex-] (0.3947, 1.09564) -- (0.4047, 1.12876) ;

    \draw[xshift=9cm, smooth,samples=100,domain=0.0:0.7] plot(-\x,{((2.5 * sqrt(1 - (0.7)^2))/0.7) * (\x/sqrt(1 - \x^2))});

    \draw[xshift=9cm, latex-] (-0.3947, 1.09564) -- (-0.4047, 1.12876) ;

    \draw[xshift=9cm, smooth,samples=100,domain=0.0:0.95] plot(-\x,{((2.5 * sqrt(1 - (0.95)^2))/0.95) * (\x/sqrt(1 - \x^2))});
    \draw[xshift=9cm, latex-] (-0.8, 1.09561) -- (-0.81, 1.13498);

\draw[xshift=9cm,color=gray!110!black,line width=0.1cm] (-1.0,0.0) -- (1.0,0.0);
  
    \fill[xshift=9cm,color=gray!110!black] (1.0,0.0) circle (0.035);
    \fill[xshift=9cm,color=gray!110!black] (-1.0,0.0) circle (0.035);

\end{tikzpicture}
\caption{From left to right: phase portraits of $f$ (and
$f_{1,2}$), $f_{0,2}$, $f_{2,5}$ and $f_{0,0}$. \label{campof}}
\end{figure}

Now, let $0\leq r\leq s\leq j$ be non-negative integers and define
$C^1$-vector fields $g_{r,s,j}^+,g_{r,s,j}^-$ as follows. This
time our starting point is
 $$
 g(x,y)=\left((x^2-1)\left(x^2-\left(1-\frac{(1-y)^2}{2}\right)^2\right),y(y-1)x\right)
 $$
and we are interested in its phase portrait in the rectangle
$[-1,1]\times [0,1]$. We have six singular points: the saddles
$(-1,0)$ and $(1,0)$, the repelling node $(-1/2,0)$, the
attracting node $(1/2,0)$ and the semi-hyperbolic singularities
$(-1,1)$ and $(1,1)$. The boundary of the rectangle is invariant
for the flow, hence consisting of the singular points and six
regular orbits, all clockwise oriented by the flow except that
connecting $(-1/2,0)$ and $(1/2,0)$. Additional isoclines exist at
the $y$-axis (for the horizontal direction of the flow) and the
parabolas $x=\pm (1-(1-y)^2/2)$ (for the vertical direction of the
flow), which ensures that all orbits in the rectangle interior
crossing the $y$-axis go to $(-1/2,0)$ (respectively, $(1/2,0)$)
as time goes to $-\infty$ (respectively, $\infty$). See
Figure~\ref{campoginicial}.

\begin{figure}
\centering
\begin{tikzpicture}[scale=4.0]

\draw[fill=gray!40!white, draw=gray!20!white] plot[color=white,
smooth,samples=100,domain=-1.0:1.0] (\x,{0}) --
    plot[color=white,smooth,samples=100,domain=1.0:-1.0]  (\x,{1});

\draw[fill=gray!15!white, draw=gray!20!white] plot[color=white,
smooth,samples=100,domain=0.50:1.0] (\x,{1 - sqrt(2*(1 - \x))}) --
    plot[color=white,smooth,samples=100,domain=1.0:0.50]  (\x,{0});
\draw[fill=gray!5!white, draw=gray!20!white] plot[color=white,
smooth,samples=100,domain=-0.50:-1.0] (\x,{1 - sqrt(2*(1 + \x))})
--
  plot[color=white,smooth,samples=100,domain=-1.0:-0.50]  (\x,{0});

 \draw (-1.0,0.0) -- (1.0,0.0);
 \draw (-1.0,1.0) -- (1.0,1.0);
 \draw (-1.0,0.0) -- (-1.0,1.0);
 \draw (1.0,0.0) -- (1.0,1.0);
 \draw[dashed] (0.0,0.0) -- (0.0,1.0);

\draw[smooth,samples=100,domain=0.5:1.0, dashed] plot(\x,{1 -
sqrt(2*(1 - \x))});

\draw[smooth,samples=100,domain=-1.0:-0.5, dashed] plot(\x,{1 -
sqrt(2*(1 + \x))});

 \node[color=black!70] at (0.0,0.55){\fder};
 \node[color=black!70] at (0.75,0.30){\faba};
 \node[color=black!70] at (-0.75,0.30){\farr};

 \node[color=black!70] at (0.4,0.4) [style={rectangle,draw,inner sep=1}] {\NWSE};
 \node[color=black!70] at (-0.4,0.4) [style={rectangle,draw,inner sep=1}] {\SWNE};
 \node[color=black!70] at (0.9,0.15) [style={rectangle,draw,inner sep=1}] {\nesw};
 \node[color=black!70] at (-0.9,0.15) [style={rectangle,draw,inner sep=1}] {\senw};

\draw (0., 0.9) -- (0.200613, 0.898077) -- (0.372594,
  0.892417) -- (0.505107, 0.883249) -- (0.602276,
  0.870758) -- (0.672829, 0.855012) -- (0.724555,
  0.835982) -- (0.763098, 0.813585) -- (0.792275,
  0.787734) -- (0.814619, 0.758383) -- (0.831815,
  0.725568) -- (0.844985, 0.689445) -- (0.854884,
  0.65031) -- (0.862014, 0.60862) -- (0.866707, 0.56498) -- (0.869173,
   0.520125) -- (0.869542, 0.474869) -- (0.867884,
  0.430055) -- (0.864237, 0.38649) -- (0.858623,
  0.344888) -- (0.851071, 0.305824) -- (0.841628,
  0.269715) -- (0.830381, 0.236807) -- (0.817468,
  0.20719) -- (0.803081, 0.180824) -- (0.787465,
  0.157562) -- (0.77091, 0.137191) -- (0.753732,
  0.11945) -- (0.736253, 0.104065) -- (0.718783,
  0.0907575) -- (0.701598, 0.0792654) -- (0.684934,
  0.0693453) -- (0.668978, 0.0607789) -- (0.653866,
  0.0533736) -- (0.639688, 0.0469619) -- (0.626493,
  0.0414) -- (0.614298, 0.0365646) -- (0.603091,
  0.0323511) -- (0.592843, 0.0286709) -- (0.583509,
  0.025449) -- (0.575038, 0.0226216) -- (0.567369,
  0.020135) -- (0.560444, 0.0179434) -- (0.554203,
  0.0160081) -- (0.548586, 0.0142959) -- (0.543538,
  0.0127784) -- (0.539006, 0.0114314) -- (0.53494,
  0.0102339) -- (0.531295, 0.00916808) -- (0.528029,
  0.00821815) -- (0.525104, 0.00737064) -- (0.522485,
  0.00661373) -- (0.52014, 0.00593713) -- (0.518041,
  0.00533183) -- (0.516162, 0.00478992) -- (0.514481,
  0.00430444) -- (0.512976, 0.00386925) -- (0.51163,
  0.00347893) -- (0.510424, 0.00312868) -- (0.509345,
  0.00281427) -- (0.508379, 0.00253192) -- (0.507514,
  0.00227826) -- (0.506739, 0.00205031) -- (0.506046,
  0.00184541) -- (0.505424, 0.00166117) -- (0.504867,
  0.00149549) -- (0.504368, 0.00134645) -- (0.503921,
  0.00121237) -- (0.50352, 0.00109172) -- (0.50316,
  0.000983151) -- (0.502838, 0.000885425) -- (0.502549,
  0.00079746) -- (0.50229, 0.000718265) -- (0.502057,
  0.000646966) -- (0.501848, 0.000582765) -- (0.501661,
  0.000524954) -- (0.501492, 0.000472893) -- (0.501341,
  0.000426005) -- (0.501206, 0.000383778) -- (0.501084,
  0.000345743) -- (0.500975, 0.000311483) -- (0.500876,
  0.000280624) -- (0.500037, 0.0000123131);

\draw (0., 0.9) -- (-0.200613, 0.898077) -- (-0.372594,
  0.892417) -- (-0.505107, 0.883249) -- (-0.602276,
  0.870758) -- (-0.672829, 0.855012) -- (-0.724555,
  0.835982) -- (-0.763098, 0.813585) -- (-0.792275,
  0.787734) -- (-0.814619, 0.758383) -- (-0.831815,
  0.725568) -- (-0.844985, 0.689445) -- (-0.854884,
  0.65031) -- (-0.862014, 0.60862) -- (-0.866707,
  0.56498) -- (-0.869173, 0.520125) -- (-0.869542,
  0.474869) -- (-0.867884, 0.430055) -- (-0.864237,
  0.38649) -- (-0.858623, 0.344888) -- (-0.851071,
  0.305824) -- (-0.841628, 0.269715) -- (-0.830381,
  0.236807) -- (-0.817468, 0.20719) -- (-0.803081,
  0.180824) -- (-0.787465, 0.157562) -- (-0.77091,
  0.137191) -- (-0.753732, 0.11945) -- (-0.736253,
  0.104065) -- (-0.718783, 0.0907575) -- (-0.701598,
  0.0792654) -- (-0.684934, 0.0693453) -- (-0.668978,
  0.0607789) -- (-0.653866, 0.0533736) -- (-0.639688,
  0.0469619) -- (-0.626493, 0.0414) -- (-0.614298,
  0.0365646) -- (-0.603091, 0.0323511) -- (-0.592843,
  0.0286709) -- (-0.583509, 0.025449) -- (-0.575038,
  0.0226216) -- (-0.567369, 0.020135) -- (-0.560444,
  0.0179434) -- (-0.554203, 0.0160081) -- (-0.548586,
  0.0142959) -- (-0.543538, 0.0127784) -- (-0.539006,
  0.0114314) -- (-0.53494, 0.0102339) -- (-0.531295,
  0.00916808) -- (-0.528029, 0.00821815) -- (-0.525104,
  0.00737064) -- (-0.522485, 0.00661373) -- (-0.52014,
  0.00593713) -- (-0.518041, 0.00533183) -- (-0.516162,
  0.00478992) -- (-0.514481, 0.00430444) -- (-0.512976,
  0.00386925) -- (-0.51163, 0.00347893) -- (-0.510424,
  0.00312868) -- (-0.509345, 0.00281427) -- (-0.508379,
  0.00253192) -- (-0.507514, 0.00227826) -- (-0.506739,
  0.00205031) -- (-0.506046, 0.00184541) -- (-0.505424,
  0.00166117) -- (-0.504867, 0.00149549) -- (-0.504368,
  0.00134645) -- (-0.503921, 0.00121237) -- (-0.50352,
  0.00109172) -- (-0.50316, 0.000983151) -- (-0.502838,
  0.000885425) -- (-0.502549, 0.00079746) -- (-0.50229,
  0.000718265) -- (-0.502057, 0.000646966) -- (-0.501848,
  0.000582765) -- (-0.501661, 0.000524954) -- (-0.501492,
  0.000472893) -- (-0.501341, 0.000426005) -- (-0.501206,
  0.000383778) -- (-0.501084, 0.000345743) -- (-0.500975,
  0.000311483) -- (-0.500876, 0.000280624) -- (-0.500788,
  0.000252826) -- (-0.500709, 0.000227786) -- (-0.500637,
  0.000205227) -- (-0.500573, 0.000184907) -- (-0.500516,
  0.000166598) -- (-0.500464, 0.000150106) -- (-0.500418,
  0.000135245) -- (-0.500376, 0.000121858) -- (-0.500338,
  0.000109796) -- (-0.500304, 0.0000989293) -- (-0.500274,
  0.000089138) -- (-0.500246, 0.0000803161) -- (-0.500222,
  0.0000723682) -- (-0.5002, 0.0000652065) -- (-0.50018,
  0.0000587546) -- (-0.500162, 0.0000529404) -- (-0.500146,
  0.0000477024) -- (-0.500131, 0.0000429823) -- (-0.500118,
  0.0000387295) -- (-0.500106, 0.0000348976) -- (-0.500096,
  0.0000314447) -- (-0.500086, 0.0000283338) -- (-0.500078,
  0.0000255304) -- (-0.50007, 0.0000230047) -- (-0.500063,
  0.0000207285) -- (-0.500057, 0.0000186778) -- (-0.500051,
  0.0000168304) -- (-0.500046, 0.0000151651) -- (-0.500041,
  0.0000136648) -- (-0.500037, 0.0000123131);

     \draw[latex-](0.505107, 0.883249) -- (0.372594, 0.892417);

\draw (0., 0.1) -- (-0.0731985, 0.0993133) -- (-0.143199,
  0.097308) -- (-0.20744, 0.0941365) -- (-0.264357,
  0.0900174) -- (-0.313381, 0.0851971) -- (-0.354701,
  0.079918) -- (-0.388972, 0.0743961) -- (-0.41705,
  0.0688107) -- (-0.439838, 0.0633018) -- (-0.458183,
  0.0579727) -- (-0.472837, 0.052895) -- (-0.484448,
  0.0481143) -- (-0.493562, 0.0436559) -- (-0.500635,
  0.0395296) -- (-0.506045, 0.0357341) -- (-0.510104,
  0.0322601) -- (-0.513071, 0.0290931) -- (-0.515157,
  0.0262152) -- (-0.516538, 0.0236068) -- (-0.517358,
  0.0212476) -- (-0.517735, 0.0191172) -- (-0.517766,
  0.017196) -- (-0.517532, 0.0154651) -- (-0.517097,
  0.013907) -- (-0.516514, 0.0125052) -- (-0.515825,
  0.0112445) -- (-0.515065, 0.0101112) -- (-0.514262,
  0.00909247) -- (-0.513437, 0.00817694) -- (-0.512608,
  0.00735417) -- (-0.511787, 0.00661477) -- (-0.510984,
  0.00595026) -- (-0.510207, 0.00535301) -- (-0.509462,
  0.00481621) -- (-0.508751, 0.00433363) -- (-0.508077,
  0.0038998) -- (-0.507442, 0.0035097) -- (-0.506846,
  0.00315892) -- (-0.506287, 0.00284344) -- (-0.505767,
  0.00255966) -- (-0.505283, 0.00230439) -- (-0.504834,
  0.00207471) -- (-0.504419, 0.00186806) -- (-0.504036,
  0.0016821) -- (-0.503682, 0.00151473) -- (-0.503357,
  0.00136409) -- (-0.503059, 0.0012285) -- (-0.502784,
  0.00110643) -- (-0.502533, 0.000996534) -- (-0.502303,
  0.000897589) -- (-0.502093, 0.000808498) -- (-0.501901,
  0.000728272) -- (-0.501726, 0.000656028) -- (-0.501566,
  0.000590967) -- (-0.501421, 0.00053237) -- (-0.501288,
  0.000479596) -- (-0.501168, 0.000432062) -- (-0.501058,
  0.000389246) -- (-0.500958, 0.00035068) -- (-0.500868,
  0.00031594) -- (-0.500786, 0.000284646) -- (-0.500711,
  0.000256454) -- (-0.500643, 0.000231059) -- (-0.500582,
  0.000208179) -- (-0.500526, 0.000187568) -- (-0.500476,
  0.000168998) -- (-0.50043, 0.000152268) -- (-0.500389,
  0.000137195) -- (-0.500351, 0.000123616) -- (-0.500318,
  0.00011138) -- (-0.500287, 0.000100357) -- (-0.500259,
  0.0000904237) -- (-0.500234, 0.0000814751) -- (-0.500211,
  0.0000734114) -- (-0.500191, 0.0000661468) -- (-0.500172,
  0.0000596008) -- (-0.500156, 0.0000537032) -- (-0.50014,
  0.0000483891) -- (-0.500127, 0.0000436009) -- (-0.500114,
  0.0000392867) -- (-0.500103, 0.0000353993) -- (-0.500093,
  0.0000318976) -- (-0.500084, 0.0000287421) -- (-0.500076,
  0.0000258996) -- (-0.500068, 0.0000233383) -- (-0.500062,
  0.0000210288) -- (-0.500056, 0.0000189481) -- (-0.50005,
  0.0000170734) -- (-0.500045, 0.0000153838) -- (-0.500041,
  0.0000138619) -- (-0.500037, 0.0000124905) -- (-0.500033,
  0.0000112548) -- (-0.50003, 0.0000101417);

    \draw (0., 0.1) -- (0.0731985, 0.0993133) -- (0.143199,
  0.097308) -- (0.20744, 0.0941365) -- (0.264357,
  0.0900174) -- (0.313381, 0.0851971) -- (0.354701,
  0.079918) -- (0.388972, 0.0743961) -- (0.41705,
  0.0688107) -- (0.439838, 0.0633018) -- (0.458183,
  0.0579727) -- (0.472837, 0.052895) -- (0.484448,
  0.0481143) -- (0.493562, 0.0436559) -- (0.500635,
  0.0395296) -- (0.506045, 0.0357341) -- (0.510104,
  0.0322601) -- (0.513071, 0.0290931) -- (0.515157,
  0.0262152) -- (0.516538, 0.0236068) -- (0.517358,
  0.0212476) -- (0.517735, 0.0191172) -- (0.517766,
  0.017196) -- (0.517532, 0.0154651) -- (0.517097,
  0.013907) -- (0.516514, 0.0125052) -- (0.515825,
  0.0112445) -- (0.515065, 0.0101112) -- (0.514262,
  0.00909247) -- (0.513437, 0.00817694) -- (0.512608,
  0.00735417) -- (0.511787, 0.00661477) -- (0.510984,
  0.00595026) -- (0.510207, 0.00535301) -- (0.509462,
  0.00481621) -- (0.508751, 0.00433363) -- (0.508077,
  0.0038998) -- (0.507442, 0.0035097) -- (0.506846,
  0.00315892) -- (0.506287, 0.00284344) -- (0.505767,
  0.00255966) -- (0.505283, 0.00230439) -- (0.504834,
  0.00207471) -- (0.504419, 0.00186806) -- (0.504036,
  0.0016821) -- (0.503682, 0.00151473) -- (0.503357,
  0.00136409) -- (0.503059, 0.0012285) -- (0.502784,
  0.00110643) -- (0.502533, 0.000996534) -- (0.502303,
  0.000897589) -- (0.502093, 0.000808498) -- (0.501901,
  0.000728272) -- (0.501726, 0.000656028) -- (0.501566,
  0.000590967) -- (0.501421, 0.00053237) -- (0.501288,
  0.000479596) -- (0.501168, 0.000432062) -- (0.501058,
  0.000389246) -- (0.500958, 0.00035068) -- (0.500868,
  0.00031594) -- (0.500786, 0.000284646) -- (0.500711,
  0.000256454) -- (0.500643, 0.000231059) -- (0.500582,
  0.000208179) -- (0.500526, 0.000187568) -- (0.500476,
  0.000168998) -- (0.50043, 0.000152268) -- (0.500389,
  0.000137195) -- (0.500351, 0.000123616) -- (0.500318,
  0.00011138) -- (0.500287, 0.000100357) -- (0.500259,
  0.0000904237) -- (0.500234, 0.0000814751) -- (0.500211,
  0.0000734114) -- (0.500191, 0.0000661468) -- (0.500172,
  0.0000596008) -- (0.500156, 0.0000537032) -- (0.50014,
  0.0000483891) -- (0.500127, 0.0000436009) -- (0.500114,
  0.0000392867) -- (0.500103, 0.0000353993) -- (0.500093,
  0.0000318976) -- (0.500084, 0.0000287421) -- (0.500076,
  0.0000258996) -- (0.500068, 0.0000233383) -- (0.500062,
  0.0000210288) -- (0.500056, 0.0000189481) -- (0.50005,
  0.0000170734) -- (0.500045, 0.0000153838) -- (0.500041,
  0.0000138619) -- (0.500037, 0.0000124905) -- (0.500033,
  0.0000112548);

     \draw[latex-](0.354701,  0.079918) -- (0.313381, 0.0851971);

\draw[latex-] (-1.0, 0.58) -- (-1.0, 0.5339);
 \draw[latex-] (1.0, 0.5339) -- (1.0, 0.58026);
 \draw[latex-] (0.7, 0.0) -- (0.71, 0.0);
 \draw[latex-] (-0.71, 0.0) -- (-0.70, 0.0);
 \draw[latex-] (0.01, 1.0) -- (-0.01, 1.0);
 \draw[latex-] (0.30, 0.0) -- (0.29, 0.0);

 \fill[color=gray!110!black] (-1.0,0.0) circle (0.015);
 \fill[color=gray!110!black] (-0.5,0.0) circle (0.015);
 \fill[color=gray!110!black] (0.5,0.0) circle (0.015);
 \fill[color=gray!110!black] (1.0,0.0) circle (0.015);
 \fill[color=gray!110!black] (1.0,1.0) circle (0.015);
 \fill[color=gray!110!black] (-1.0,1.0) circle (0.015);

\end{tikzpicture}
\caption{Phase portrait of $g$.\label{campoginicial}}
\end{figure}
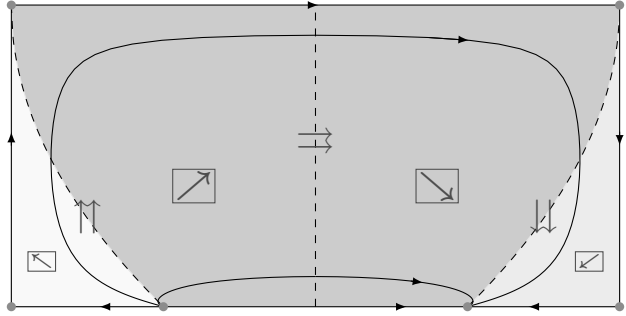

As it happens, this completes the phase portrait because in fact
all interior orbits cross the $y$-axis. To prove this we must
discard the existence of full orbits in the region to the right of
the isocline $x=1-(1-y)^2/2$ or, equivalently (because of the
symmetry properties of the vector field) in the region to the left
of the isocline $x=-1+(1-y)^2/2$. This follows from the fact that
the flow, near $(1,1)$, is equivalent to that associated to
$x'=x^2$, $y'=y$ near $\0$ (this is a consequence of
\cite[Theorem~2.19, pp. 74--75]{DLA}). Alternatively, one can
prove there are no full orbits to the right of $x=1-(1-y)^2/2$ in
a direct way as follows. It clearly suffices to show that the
vector field crosses from left to right all lines $y=1+a(x-1)$,
$a>0$, in the square $(1/2,1)\times (1/2,1)$, that is, $a
g_1(1-t,1-at)-g_2(1-t,1-at)>0$ whenever $0<t<1/2$ and $0<at<1/2$,
when we mean $g=(g_1,g_2)$. Since
\begin{eqnarray*}
 \frac{a g_1(1-t,1-at)-g_2(1-t,1-at)}{at} &=&
   1 + 3 t - a t - 4 t^2 + a t^2 - 2 a^2 t^2 \\
   &&\qquad + (1 + a^2) t^3 +\frac{a^4 t^4}{2} - \frac{a^4 t^5}{4} \\
  &>&  1 + 3 t -\frac{1}{2}  - 2 t + a t^2 -\frac{1}{2} \\
   &&\qquad +(1+a^2) t^3 +\frac{a^4 t^4}{2} - \frac{t}{64} \\
  &=& \frac{63 t}{64} + a t^2 + (1 + a^2) t^3 + \frac{a^4 t^4}{2}
    \; > \; 0,
\end{eqnarray*}
we are done.

Let $\kappa(x)$ be a non-negative $C^1$-function vanishing at
points $x=-1+i/(2r)$, $0\leq i\leq r$ (or at the whole interval
$[-1,-1/2]$ if $r=0$), at points $x=-1/2+i/(s-r)$, $0\leq i\leq
s-r$ (or at the whole interval $[-1/2,1/2]$ if $r=s$), at points
$x=1/2+i/(2j-2s)$, $0\leq i\leq j-s$ (or at the whole interval
$[1/2,1]$ if $s=j$), and at no other points. Then we define
$g_{r,s,j}^+(x,y)=(\kappa(x)+y^2)(1-x^2)g(x,y)$. In this way, we
add some new singular points at the $x$-axis, and all points from
both vertical borders of the rectangle become singular as well,
yet the inner orbits remain the same. Finally we put
$g_{r,s,j}^-(x,y)=-g_{r,s,j}(x,y)$, getting the same phase
portrait with reversed time directions. Some examples of the phase
portraits of these vector fields are shown in Figure~\ref{campog}.

\begin{figure}
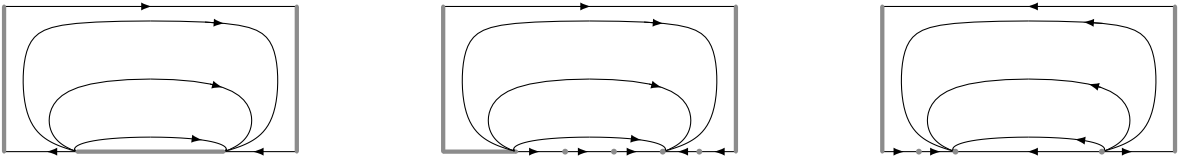

\centering

\caption{Phase portraits of $g^+_{1,1,2}$ (left), $g^+_{0,3,5}$
(center) and $g^-_{2,3,4}$ (right). \label{campog}}
\end{figure}

Let $L$ be a feasible set with base $V$. We are ready to explain
how to construct a polynomial flow $\Phi$ whose associated
feasible set, after fixing the counterclockwise orientation and
choosing an appropriate heteroclinic separatrix of $\Phi$, is
exactly $L$.

Let $n$ be the length of the largest sequence in $V$ and recall
that $n\geq 2$. Also, let $t=\lambda(\emptyset)\geq 1$. Firstly,
we define a vector field $F$ on $\RR$ by gluing (after appropriate
translations and dilatations) some vectors fields
$f_{r,j},g^+_{r,s,j},g^-_{r,s,j}$ (and the null vector field) as
prescribed by $L$.

To begin with, if $(i)\in V$, then we glue at the semi-band
$[i-1,i]\times [0,\infty)$ the vector field
$f_{\sigma(i),\lambda(i)}$ (better to say,
$f_{\sigma(i),\lambda(i)}(2x-2i+1,y)$). Note that the way we
defined the maps $f_{s,j}$ ensures that adjacent pieces glue well
at the orbits $\Upsilon_i:=\{i\}\times [0,\infty)$.

Now, the maximal compact intervals $I$ in $I_i:=[i-i,i]$ such that
$\Inte I\times \{0\}$ contains no singular points will be denoted,
from left to right, by $I_{i,1},\ldots,I_{i,\lambda(i)}$, the flow
travelling to the right on $\Upsilon_{i,k}:=I_{i,k}\times \{0\}$
if and only if $k\leq \sigma(i)$. Certainly, maximal compact
intervals $N$ with $N\times \{0\}$ just consisting of singular
points may exist; we call each of them a $0$-level null interval.

After $F$ has been defined on $[0,t]\times [0,\infty)$, we define
it in $[0,t]\times [-1,0)$. In the rectangles $N\times [-1,0)$,
where $N$ is a $0$-level null interval, we just define $F$ as
zero; and at the rectangles $I_{i,k}\times [-1,0)$ we glue either
the vector field $g^+_{\rho(i,k),\sigma(i,k),\lambda(i,k)}$ (more
properly,
$$
g^+_{\rho(i,k),\sigma(i,k),\lambda(i,k)}((2x-a-b)/(b-a),y+1)
$$
with $I_{i,k}=[a,b]$) or the vector field
$g^-_{\rho(i,k),\sigma(i,k),\lambda(i,k)}$ according to whether
the flow in $\Upsilon_{i,k}$ goes to the right or to the left.
Similarly as before, the maximal compact intervals $I$ in
$I_{i,k}$ such that $\Inte I\times \{-1\}$ contains no singular
points will be denoted, ordered from left to right,
$I_{i,k,1},\ldots,I_{i,k,\lambda(i,k)}$ (write also
$\Upsilon_{i,k,k'}=I_{i,k,k'}\times \{-1\})$, and the flows
travels on $\Upsilon_{i,k,k'}$ in the same direction as in
$\Upsilon_{i,k}$ if and only if $\rho(i,k)<k'\leq \sigma(i,k)$.
Any maximal compact interval $N$ such that $N\times \{-1\}$
consists of singular points will be called a $1$-level null
interval.

Proceeding in this way, we associate inductively  to each vector
$v\in V$ of length $m\geq 2$ an interval $I_v\subset [0,t]$ (and
the corresponding orbit $\Upsilon_v=I_v\times \{-m+2\}$), and
define the $m$-level null intervals. Then  we define $F$ as zero
in $N\times [-m+1,-m+2)$ if $N$ is $m$-level null, or as
$g^+_{\rho(v),\sigma(v),\lambda(v)}$ or
$g^-_{\rho(v),\sigma(v),\lambda(v)}$ in $I_v\times [-m+1,-m+2)$
according to the direction of the flow on $\Upsilon_v$. Note that
the full lowest segment $[0,t]\times \{-n+1\}$ is null, that is,
all its points are singular.

Thus we have completed the definition of $F$ on $[0,t]\times
[-n+1,\infty)$. Note that the map so defined is not locally
Lipschitz (or even continuous) at the orbits $\Upsilon_v$; this
can be easily arranged by multiplying $F$ by  appropriate positive
$C^1$-functions $\tau_v(x)$ in the corresponding semi-open
rectangles $ \Inte I_v\times [-m+1,-m+2)$. We keep calling $F$
this modified map; note that, even so, it needs not be continuous
at the singular points. To conclude the definition of $F$, we
extend it periodically to the whole semi-plane $\R\times
[-n+1,\infty)$ (that is $F(x,y)=F(x+kt,y)$ for any integer $k$)
and define it as zero otherwise.

Before proceeding further, some additional notation must be given.
First, let $\Upsilon_i'=\{i-1/2\}\times [0,\infty)$, $i=1,\ldots,
t$. Also, for any $v\in V$ with length $m\geq 2$, let
$\Upsilon_v'$ be the orbit in $I_v\times (-m+1,-m+2)$
corresponding, after translation and dilatation, to the orbit of
the vector field $g(x,y)$ passing through the point $(0,1/2)$. Now
it is easy to construct a poligonal arc $A$ with endpoints
$(0,1/2)$ and $(t,1/2)$, consisting of alternate horizontal and
vertical segments, so that:
\begin{itemize}
 \item horizontal segments are of type
  $J\times \{-m + \epsilon_{J}\}$
  for some compact interval $J$, some $0<\epsilon_J< 1$
    and $0\leq m<n$;
 \item any two such intervals $J,J'$ have at most one common point,
  and the union of all intervals $J$ is $[0,t]$;
 \item $A$ intersects each orbit $\Upsilon_i,\Upsilon_i'$ at
 exactly one point, and all other orbits $\Upsilon_v,\Upsilon_v'$
 at exactly two points.
\end{itemize}
Observe that the bijection mapping $L$ to the set of these
intersection points that preserves orders (hence mapping
$(t,\lambda(t)+1)$ to $(t,1/2)$), also preserves orbits as
previously meant, that is, every vector $(i,h)$ is mapped  either
to $A\cap \Upsilon_i$ or to $A\cap \Upsilon_i'$ and every pair of
vectors $(v,h),(v,h')$ with $h+h'$ an integer is  mapped either to
$A\cup \Upsilon_v$ or to $A\cup \Upsilon_v'$.

Figure~\ref{BuildingF} illustrates the former construction
starting from the feasible set $L$ described in Table~\ref{tabla}.
The dotted line indicates the arc $A$.

\begin{table}
 \renewcommand{\arraystretch}{1.5}
 \centering
\begin{tabu}{c|[1pt]c}
 $V$ & $L$ \\
 \tabucline[1pt]{-}
 $(1)$ & $(1,\frac{5}{3})$, $(1,4)$ \\
 \hline
 $(1,1)$ & $(1,1,0)$, $(1,1,\frac{1}{3})$, $(1,1,\frac{2}{3})$, $(1,1,1)$\\
 \hline
 $(1,2)$ & $(1,2,0)$, $(1,2,\frac{1}{3})$, $(1,2,\frac{2}{3})$, $(1,2,1)$\\
 \hline
 $(1,3)$ & $(1,3,0)$, $(1,3,\frac{1}{3})$, $(1,3,\frac{2}{3})$, $(1,3,1)$\\
 \hline
 $(2)$ & $(2,\frac{2}{3})$, $(2,2)$ \\
 \hline
 $(2,1)$ & $(2,1,0)$, $(2,1,\frac{1}{3})$, $(2,1,\frac{8}{3})$, $(2,1,3)$\\
 \hline
 $(2,1,1)$ & $(2,1,1,0)$, $(2,1,1,\frac{1}{3})$, $(2,1,1,\frac{2}{3})$, $(2,1,1,1)$\\
 \hline
 $(2,1,2)$ & $(2,1,2,0)$, $(2,1,2,\frac{1}{3})$, $(2,1,2,\frac{2}{3})$, $(2,1,2,1)$\\
 \hline
 $(3)$ & $(3,\frac{2}{3})$, $(3,1)$ \\
 \hline
 $(4)$ & $(4,\frac{5}{3})$, $(4,2)$ \\
 \hline
 $(4,1)$ & $(4,1,0)$, $(4,1,\frac{4}{3})$, $(4,1,\frac{5}{3})$, $(4,1,3)$\\
 \hline
 $(4,1,1)$ & $(4,1,1,0)$, $(4,1,1,\frac{1}{3})$, $(4,1,1,\frac{2}{3})$, $(4,1,1,1)$\\
 \hline
 $(4,1,2)$ & $(4,1,2,0)$, $(4,1,2,\frac{1}{3})$, $(4,1,2,\frac{2}{3})$, $(4,1,2,1)$\\
 \hline
\end{tabu}
\caption{\label{tabla} The elements of the feasible set $L$ and
its base $V$ from Figure~\ref{BuildingF}.}
\end{table}

\begin{figure}
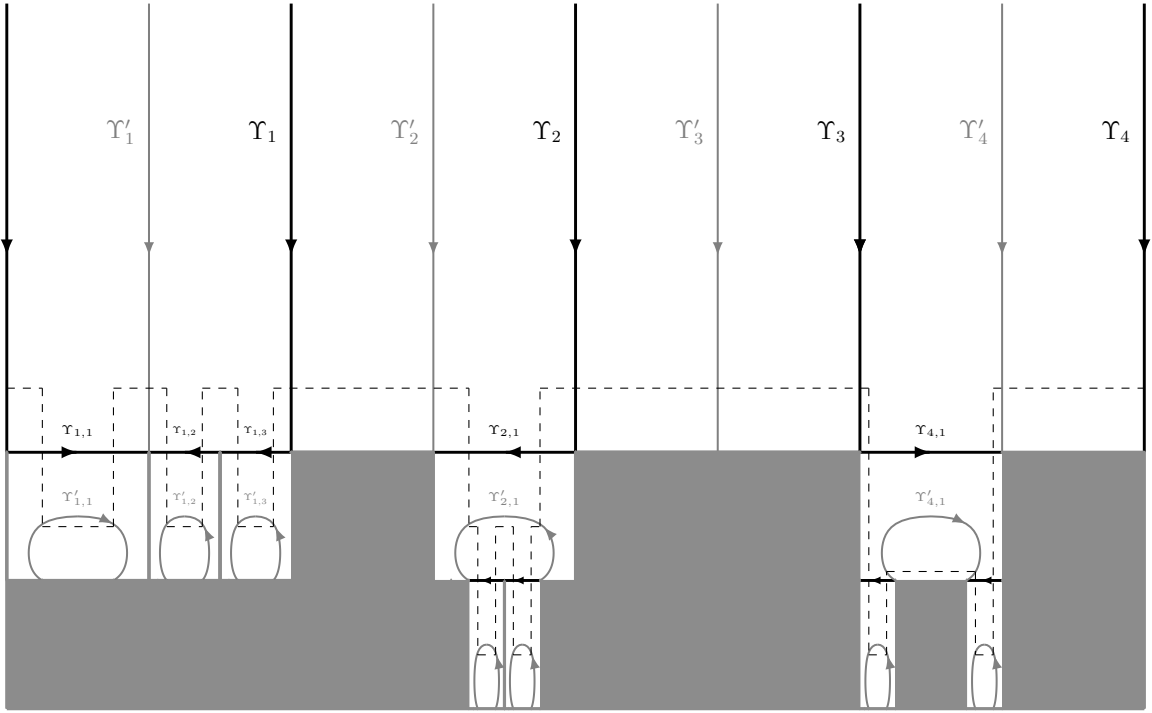

\centering


\caption{Constructing a polynomial flow from a feasible set.
\label{BuildingF}}
\end{figure}

Let $\Xi:\RR\to\RR\setminus \{\0\}$ be given by
$\Xi(r,\theta)=e^{r+\ii 2\pi\theta/t}$. Although $F$ may not be
continuous, the set $T$ of singular points of $F$ is closed and
$F$ is locally Lipschitz in the region $O=\RR\setminus T$; hence,
when restricted to $O$, it has an associated local flow which can
be naturally carried to the region $U=\Xi(O)$ via $\Xi$: call
$\Psi'$ this projected local flow on $U$. Let $\Psi$ be a flow on
$\RR_\infty$ with the same orbits and time orientations as
$\Psi'$, and having singular points outside $U$, that is, at
$K=\Xi(T)\cup \{\0\}$ and $\infty$. This flow induces in
$\mathcal{Q}=\RR_\infty/\sim_K$, in the natural way, a flow
$\Psi_{\sim_K}$ with two singular points, $K$ (now an element of
$\mathcal{Q}$) and $\infty$. Moreover, since $\RR_\infty\setminus
K$ is connected, there is a homeomorphism $H:\mathcal{Q}\to
\RR_\infty$ (Lemma~\ref{cociente}), when we can assume $H(K)=\0$,
$H(\infty)=\infty$. After carrying $\Psi_{\sim_K}$ to $\RR_\infty$
via $H$, we get a flow $\Phi'$ on $\RR_\infty$ having (when
restricted to $\RR$) $\0$ as its global attractor, its separatrix
skeleton consisting of $\0$ and the curves $(H\circ
\Xi)(\Upsilon_v),(H\circ \Xi)(\Upsilon'_v)$, $v\in V$. Using
$C=(H\circ \Xi)(A)$, now a circle around $\0$, choosing an
appropriate orientation $\Theta$ in $\RR$, and taking
$\Sigma=(H\circ \Xi)(\Upsilon_t)$ (recall also
Lemma~\ref{redundante}), we get that $L$ is the canonical feasible
set associated to $\Phi'$, $\Theta$ and $\Sigma$. Composing $H$ if
necessary with a reversing order homeomorphism, we can in fact get
$\Theta$ to be the counterclockwise orientation.

We are almost done. Indeed, since $\Phi'$ has finitely many
unstable orbits, two singular points (the only possible
$\alpha$-limit and $\omega$-limit sets of the flow) and no
periodic orbits, \cite[Lemma~4.1]{JP} (essentially, a corollary of
the main results in \cite{Gu} and \cite{SS}) implies that it is
topologically equivalent to the associated flow to a polynomial
vector field in $\mathbb{S}^2$ and then, as explained in
Section~\ref{preliminary}, to a polynomial flow in $\RR$.
Figure~\ref{BuiltF} shows the resultant flow after collapsing the
flow from Figure~\ref{BuildingF}.

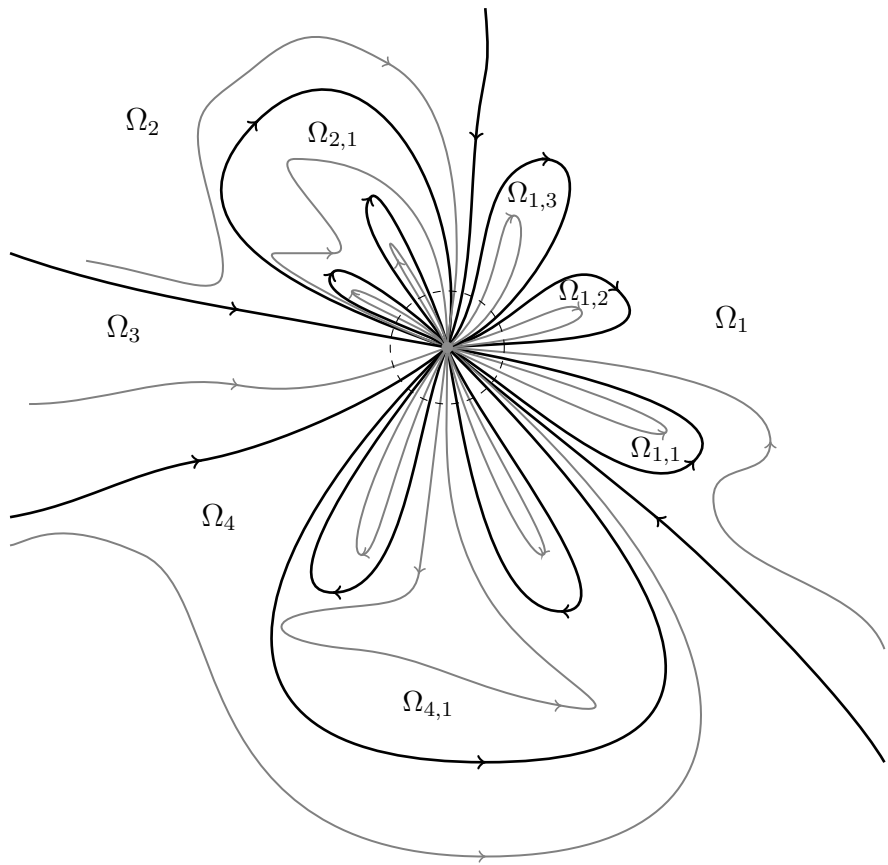
\begin{figure}
 \centering
\begin{tikzpicture}[scale=0.25]


\draw[line width=1]
   (46,5) .. controls +(120:4) and +(320:4) .. (34,18)
           .. controls +(140:4) and +(320:4) ..  (23,27);
\draw[->, line width=1]
    (34,18) .. controls +(140:0.05) and +(320:0.05) ..  (34,18);

\draw[line width=1]
   (25,45) .. controls +(275:4) and +(85:4) .. (24.5,38)
           .. controls +(265:4) and +(70:4) ..  (23,27);
\draw[->, line width=1]
    (24.5,38) .. controls +(265:0.05) and +(85:0.05) ..  (24.5,38);

\draw[line width=1]
   (23,27) .. controls +(330:4) and +(225:4) .. (36,21)
           .. controls +(45:4) and +(350:4) ..  (23,27);
\draw[->, line width=1]
    (36,21) .. controls +(45:0.05) and +(225:0.05) ..  (36,21);

\draw[line width=0.75, color=gray]
   (23,27) .. controls +(335:4) and +(225:1) .. (34.5,22.5)
           .. controls +(45:1) and +(345:4) ..  (23,27);
\draw[->, line width=0.75, color=gray]
   (34.5,22.5) .. controls +(25:0.05) and +(205:0.05) .. (34.5,22.5);

\node at (34,21.5) {\small{$\Omega_{1,1}$}};

\draw[line width=1]
     (23,27) .. controls +(13:4) and +(135:4) ..  (32,30)
           .. controls +(315:4) and +(5:4) ..  (23,27);

\draw[->, line width=1]
    (32,30) .. controls +(325:0.05) and +(135:0.05) ..  (32,30);

\draw[line width=0.75, color=gray]
     (23,27) .. controls +(13:4) and +(135:1) ..  (30,29)
           .. controls +(315:1) and +(5:4) ..  (23,27);
\draw[->, line width=0.75, color=gray]
    (30,29) .. controls +(335:0.05) and +(145:0.05) ..  (30,29);

\node at (30.2,29.8) {\small{$\Omega_{1,2}$}};

\draw[line width=1]
     (23,27) .. controls +(50:4) and +(190:4) ..  (28,37)
           .. controls +(359:4) and +(15:4) ..  (23,27);

\draw[>-, line width=1]
     (28,37) .. controls +(0:0.1) and +(180:0.1) ..  (28,37);

\draw[line width=0.75, color=gray]
     (23,27) .. controls +(42:4) and +(190:1) ..  (26.5,34)
           .. controls +(359:1) and +(25:4) ..  (23,27);
\draw[->, line width=0.75, color=gray]
     (26.5,34) .. controls +(25:0.1) and +(205:0.1) ..  (26.5,34);

\node at (27.5,35) {\small{$\Omega_{1,3}$}};

\draw[line width=0.75, color=gray]
   (46,11) .. controls +(110:4) and +(270:4) .. (37,19)
          .. controls +(90:2) and +(270:2) ..  (40,22)
           .. controls +(90:4) and +(-5:4) ..  (23,27);
\draw[->, line width=0.75, color=gray]
    (40,22) .. controls +(90:0.05) and +(270:0.05) ..  (40,22);



\draw[line width=1]
     (23,27) .. controls +(160:8) and +(225:8) ..  (13,39)
           .. controls +(45:8) and +(80:8) ..  (23,27);
\draw[->, line width=1]
     (13,39) .. controls +(45:0.1) and +(225:0.1) ..  (13,39);

\draw[line width=0.75, color=gray]
   (23,27) .. controls +(160:4.5) and +(180:1.5) .. (14,32)
                    .. controls +(0:2) and +(180:1.5) .. (17,32)
           .. controls +(0:2) and +(180:2) ..  (15,37)
           .. controls +(0:4) and +(90:8) ..  (23,27);

\draw[->, line width=0.75, color=gray]
    (17,32) .. controls +(0:0.05) and +(180:0.05) ..  (17,32);

\node at (17,38.2) {\small{$\Omega_{2,1}$}};

\draw[line width=1]
   (23,27) .. controls +(130:1) and +(225:2) .. (17,31)
           .. controls +(45:1) and +(115:1) ..  (23,27);
\draw[->, line width=1]
    (17,31) .. controls +(60:0.05) and +(240:0.05) ..  (17,31);

\draw[line width=0.75, color=gray]
   (23,27) .. controls +(123:1) and +(225:0.5) .. (18,30)
           .. controls +(45:0.5) and +(121:1) ..  (23,27);
\draw[->, line width=0.75, color=gray]
    (18,30) .. controls +(110:0.05) and +(290:0.05) ..  (18,30);

\draw[line width=1]
   (23,27) .. controls +(110:1) and +(225:2) .. (19,35)
           .. controls +(45:1) and +(90:1) ..  (23,27);
\draw[->, line width=1]
    (19,35) .. controls +(60:0.05) and +(240:0.05) ..  (19,35);

\draw[line width=0.75, color=gray]
   (23,27) .. controls +(110:1) and +(225:0.3) .. (20,32.5)
           .. controls +(45:0.3) and +(90:1) ..  (23,27);
\draw[->, line width=0.75, color=gray]
    (20.4,31.6) .. controls +(110:0.05) and +(290:0.05) ..  (20.4,31.6);

\draw[line width=1]
   (0,32) .. controls +(340:4) and +(170:4) .. (12,29)
           .. controls +(350:4) and +(170:4) ..  (23,27);
\draw[->, line width=1]
    (12,29) .. controls +(350:0.05) and +(170:0.05) ..  (12,29);

\draw[line width=0.75, color=gray]
   (4,31.6) .. controls +(-10:4) and +(225:1) .. (11,30.5)
	        .. controls +(45:1) and +(-80:2) ..  (10,37)
					.. controls +(100:3) and +(220:3) ..  (13,42) 	
					.. controls +(40:3) and +(150:4) ..  (20,42)
					.. controls +(-30:4) and +(90:2) ..  (23.5,32)
					.. controls +(270:2) and +(85:2) ..  (23,27);
	
\draw[->, line width=0.75, color=gray]
    (20,42) .. controls +(-30:0.05) and +(150:0.05) ..  (20,42);



\draw[line width=0.75, color=gray]
   (1,24) .. controls +(0:4) and +(170:4) .. (12,25)
           .. controls +(350:4) and +(200:4) ..  (23,27);
\draw[->, line width=0.75, color=gray]
    (12,25) .. controls +(350:0.05) and +(170:0.05) ..  (12,25);

\draw[line width=1]
   (0,18) .. controls +(10:4) and +(190:4) .. (10,21)
           .. controls +(370:4) and +(220:4) ..  (23,27);
\draw[->, line width=1]
    (10,21) .. controls +(370:0.05) and +(190:0.05) ..  (10,21);




\draw[line width=0.75, color=gray]
   (0,16.5) .. controls +(10:1) and +(155:5) .. (7,16)
            .. controls +(-25:5) and +(180:16) .. (25,0)
           .. controls +(0:16) and +(320:22) ..  (23,27);
\draw[->, line width=0.75, color=gray]
    (25,0) .. controls +(0:0.05) and +(180:0.05) ..  (25,0);

\draw[line width=1]
   (23,27) .. controls +(225:16) and +(180:16) .. (25,5)
           .. controls +(0:16) and +(315:16) ..  (23,27);
\draw[->, line width=1]
    (25,5) .. controls +(0:0.05) and +(180:0.05) ..  (25,5);

\draw[line width=1]
   (23,27) .. controls +(230:4) and +(180:4) .. (17,14)
           .. controls +(0:4) and +(245:4) ..  (23,27);
\draw[->, line width=1]
    (17,14) .. controls +(180:0.05) and +(0:0.05) ..  (17,14);


\draw[line width=0.75, color=gray]
   (23,27) .. controls +(233:6) and +(180:1) .. (18.5,16)
           .. controls +(0:1) and +(242:6) ..  (23,27);
\draw[->, line width=0.75, color=gray]
    (18.5,16) .. controls +(200:0.05) and +(20:0.05) ..  (18.5,16);

\node at (22,8) {\small{$\Omega_{4,1}$}};

\draw[line width=1]
   (23,27) .. controls +(280:4) and +(180:4) .. (29,13)
           .. controls +(0:4) and +(290:4) ..  (23,27);
\draw[->, line width=1]
    (29,13) .. controls +(180:0.05) and +(0:0.05) ..  (29,13);

\draw[line width=0.75, color=gray]
   (23,27) .. controls +(282:4) and +(180:1) .. (28,16)
           .. controls +(0:1) and +(288:4) ..  (23,27);

\draw[->, line width=0.75, color=gray]
    (28,16) .. controls +(250:0.05) and +(70:0.05) ..  (28,16);

\draw[line width=0.75, color=gray]
   (23,27) .. controls +(260:1) and +(80:2) .. (21.5,15)
           .. controls +(260:2) and +(10:3) ..  (16,13)
                     .. controls +(190:3) and +(175:4) ..  (18,11)
                     .. controls +(355:4) and +(170:5) .. (29,8)
                     .. controls +(350:5)  and +(300:5) .. (25,14)
                    .. controls +(120:5) and +(270:5) .. (23,27);
\draw[->, line width=0.75, color=gray]
    (29,8) .. controls +(0:0.05) and +(180:0.05) ..  (29,8);
\draw[->, line width=0.75, color=gray]
    (21.5,15) .. controls +(260:0.05) and +(80:0.05) ..  (21.5,15);


\fill[color=gray!110!black] (23,27) circle (.3);

\draw[dashed, line width=0.4] (23,27) circle [radius=3];


\node at (38,28.5) {$\Omega_1$}; \node at (7,39) {$\Omega_2$};
\node at (6,28) {$\Omega_3$}; \node at (11,18) {$\Omega_4$};

\end{tikzpicture}
\caption{\label{BuiltF} The phase portrait of the flow labelled by
the feasible set from Table~\ref{tabla}.}
\end{figure}

\begin{remark}
 \label{daigual}
 Since any flow having $\0$ as a global
 attractor and finitely many separatrices is
 topologically equivalent to a polynomial
 flow, and polynomial flows have the finite sectorial decomposition
 property, we get that finiteness of separatrices and sectors are,
 in fact, equivalent properties in this setting (compare to
 Remark~\ref{FSDP}).
\end{remark}

\section{Proof of Theorem~\ref{explicito}}
 \label{example}

To study the nature of the phase portrait of  \eqref{eqex} near
$\0$ and at the infinity one could use, in principle,
desigularization \cite[Chapter~3]{DLA} and the Poincar\'e
compactification \cite[Chapter~5]{DLA}. In the present case this
leads, however, to very heavy calculations; thus the need to rely
on specific (yet elementary) arguments, as those given below.

Since the polynomial $(1+x^2)^2+x^3$ has no real zeros,  $\0$ is
the only singular point of the associated local flow to
\eqref{eqex}. The isocline corresponding to the horizontal
direction of the vector field is the union of the curves $y=0$ and
$y^2+x^3=0$. Thus, the $x$-axis consists of $\0$ and two regular
orbits (both going to $\0$ in positive time) and there are no
periodic orbits, as they should enclose the singular point. The
isocline corresponding to the vertical direction of the vector
field is the curve  $(1+x^2)y + x^3=0$. Finally, the isoclines
divide the plane in six regions $U_i$, $1\leq i\leq 6$, where the
flow has a well-defined direction: see
Table~\ref{tabladirecciones} and Figure~\ref{exphase}.

\begin{table}
 \renewcommand{\arraystretch}{1.5}
 \centering
\begin{tabu}{c|[1pt]c}
 Directions & Regions \\
 \tabucline[1pt]{-}
 $x'<0$, $y'>0$ & $U_1=\{(x,y): y>0, y^2 + x^3>0\}$ \\
 \hline
 $x'<0$, $y'<0$ & $U_2=\{(x,y): y^2 + x^3<0, (1+x^2)y + x^3>0\}$ \\
 \hline
 $x'>0$, $y'<0$ & $U_3=\{(x,y): (1+x^2)y + x^3<0, y>0\}$\\
 \hline
 $x'>0$, $y'<0$ & $U_4=\{(x,y): y<0, y^2 + x^3<0\}$\\
 \hline
 $x'>0$, $y'>0$ & $U_5=\{(x,y): y^2 + x^3>0,(1+x^2)y + x^3<0\}$ \\
 \hline
 $x'<0$, $y'>0$ & $U_6=\{(x,y): (1+x^2)y + x^3>0, y <0\}$\\
 \hline
\end{tabu}
\caption{\label{tabladirecciones} Directions of the vector field
for the system \eqref{eqex}.}
\end{table}

\begin{figure}
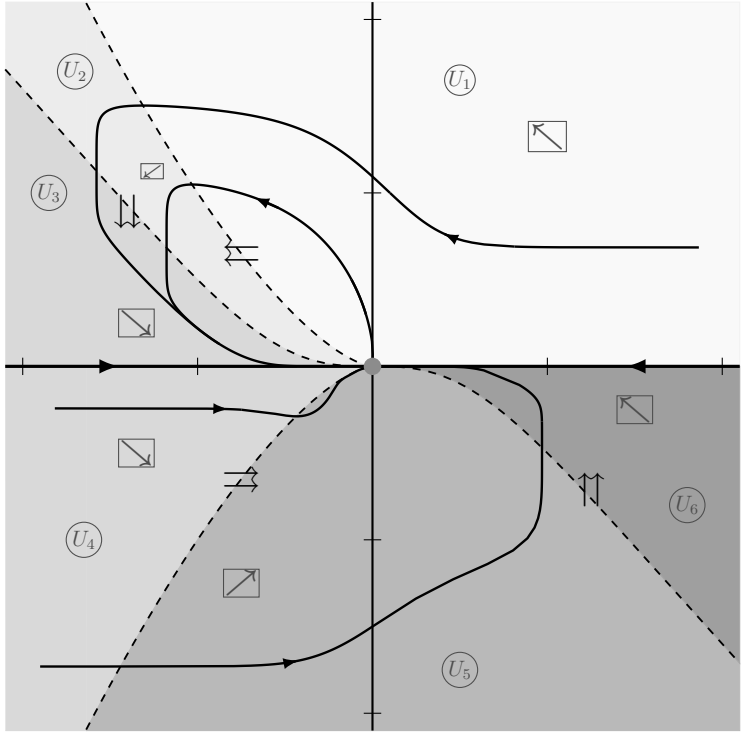

\centering

\caption{Phase portrait of $x'=-((1+x^2)y + x^3)^5$,
  $y'=y^2(y^2+x^3)$. \label{exphase}}
\end{figure}

\medskip \noindent\emph{Claim 1: The origin is a global attractor
of \eqref{eqex}.}

First of all, observe that orbits starting in $U_2$ go to $U_3$,
and orbits starting in $U_3$ go to $\0$. Similarly, orbits
starting in $U_4$ go to $U_5$, orbits starting in $U_5$ either go
to $\0$ or to $U_6$, and orbits starting in $U_6$ go to $\0$. As a
consequence, in order to prove the claim, it is enough to show
that any orbit starting in $U_1$  meets the curve $y^2+x^3=0$.

Let $P(x,y)=-((1+x^2)y + x^3)^5$ and $Q(x,y)=y^2(y^2+x^3)$ be the
components of the vector field and put $U_1'=U_1\cap \{(x,y):y\geq
1\}$. Then we have
\begin{equation}
  \label{aplanado}
 -1\leq \frac{Q(x,y)}{P(x,y)} \leq 0 \qquad \text{for any $(x,y)\in U_1'$}
\end{equation}
because if $x\geq 0$, then
 $$
 Q(x,y)=y^4+y^2x^3\leq (1+x^2)^5y^5+5(1+x^2)^4y^4x^3\leq |P(x,y)|,
 $$
while if $x\leq 0$, we use that $y\geq -x$ holds in $U_1'$ to get
 $$
 Q(x,y)\leq y^4\leq y^5\leq (y+yx^2+x^3)^5=|P(x,y)|.
 $$
Now, realize that if an orbit starts in $U_1$, then either it
crosses $y^2+x^3=0$, or goes to $U_1'$. Therefore, to prove the
claim, it suffices to show that if $(x_0,y_0) \in U_1'$, then the
orbit (corresponding to the solution) $(x(t),y(t))$ of
\eqref{eqex} starting at $x(0)=x_0$ and $y(0)=y_0$ meets
$y^2+x^3=0$. But, due to \eqref{aplanado}, we have $y'(t) \leq -
x'(t)$ and then $y(t)\leq x_0+y_0 -x(t)$ whenever the orbit stay
in $U_1'$. In other words, the orbit lies below the line
$y=x_0+y_0-x$ while staying in $U_1'$. Since this line intersects
$y^2+x^3=0$, Claim~1 follows.

\medskip \noindent\emph{Claim 2: The origin is not positively
stable for \eqref{eqex}.}

Given any $y_0 > 0$, let $(x(t),y(t))$ be the orbit of
\eqref{eqex} starting at $x(0)=0$ and $y(0)=y_0$. According to
Claim~1, this orbit must travel to $U_2$, then to $U_3$, and
finally converge to $\0$. In particular, it  meets the line
$y=-2x$. Let $t_*$ be the (smallest) positive time for which
$y(t_*)=-2 x(t_*)$ and define $Y(y_0)=y(t_*)$.

To prove the claim, it suffices to show that $Y(y_0) > 1/4$ (this
bound is very conservative; numerical estimations suggest that the
optimal bound is approximately $0.831$). We proceed by
contradiction assuming $Y(y_0) \leq  1/4$. Then $-1/8 \leq x(t)
\leq 0$ for any $0\leq t \leq t_*$.

For the sake of clarity, in this paragraph we assume $0\leq t \leq
t_*$ and shorten $x(t)$ as $x$ and $y(t)$ as $y$. Since $x\leq 0$,
we trivially have
\begin{equation}
 \label{factor1}
 y + \frac{x^3}{1 + x^2} \leq  y+ (-x)^{3/2}.
\end{equation}
We assert that
\begin{equation}
 \label{factor2}
 y + \frac{x^3}{1 + x^2} \leq 2\left(y-(-x)^{3/2}\right)
\end{equation}
is true as well. Observe that \eqref{factor2} is equivalent to
 $$
 2(1+x^2)(-x)^{3/2} + x^3 \leq (1+x^2)y
 $$
and, taking into account that $y \geq - 2 x$, a sufficient
condition for this to happen is
 $$
 (-2 x (1 + x^2) - x^3)^2 - (2 (1 + x^2) (-x)^{3/2})^2 \geq 0,
 $$
which is true indeed:
\begin{eqnarray*}
 (-2 x (1 + x^2) - x^3)^2 - (2 (1 + x^2) (-x)^{3/2})^2
 &=&  x^2 (4 + 4 x + 12 x^2 + 8 x^3 + 9 x^4 + 4 x^5) \\
&\geq& 4 x^2(1 + x + 2 x^3 +   x^5) \\
&\geq& 4 x^2\left(1 - \frac{1}{8} - \frac{1}{256}
-\frac{1}{32768}\right)\geq 0.
\end{eqnarray*}
Finally, we have
\begin{equation}
 \label{factor3}
 \frac{1}{(1+x^2)^5} \geq
 \frac{1}{(1+1/64)^5}>\frac{1}{2}.
\end{equation}
Putting together \eqref{factor1}, \eqref{factor2} and
\eqref{factor3}, we get
 $$
 \frac{Q(x,y)}{P(x,y)} =
 -\frac{y^2(y+(-x)^{3/2})(y-(-x)^{3/2})}{(1+x^2)^5(y+x^3/(1+x^2))^5}
 \leq  -\frac{1}{4 y}.
 $$

As a consequence, for every $0\leq t \leq t_*$, we have $2 y'(t)
y(t) \geq - x'(t)/2$ and therefore
 $$
 y(t)^2 \geq y_0^2 -x(t)/2 > - x(t)/2,
 $$
that is, the orbit lies over the parabola $y^2=-x/2$. Since this
parabola intersects $y=-2x$ at the point $(-1/8,1/4)$, we obtain
the desired contradiction $Y(t_0)>1/4$, and Claim~2 follows.

\medskip \noindent\emph{Claim 3: The origin is an elliptic saddle
for \eqref{eqex}.}

Let $R$ be the union set of all heteroclinic orbits of
\eqref{eqex}, that is, the closed lower half-plane (except $\0$)
and all orbits intersecting the positive semi-$y$-axis. By
Claims~1 and 2, $R$ is a radial region strictly included in
$\RR\setminus \{\0\}$ (Proposition~\ref{atractorestable}).
Moreover, it is clear that this flow does not allow a pair of
incomparable homoclinic orbits. Then $\Bd R=\Gamma\cup \{0\}$,
$\Gamma$ being the only regular homoclinic separatrix of the flow
(the other separatrices are the positive semi-$x$-axis and $\0$),
and $\0$ is an elliptic saddle (Remark~\ref{ejemplofacil}).

\medskip

Claims~1, 2 and 3 complete the proof of
Theorem~\ref{explicito}.

\section*{Acknowledgements}
We are indebted to Professor Armengol Gasull (Universitat
Aut\`onoma de Barcelona), who brought this problem to the second
author's attention.

This work has been partially supported by Ministerio de
Econom\'{\i}a y Competitividad, Spain, grant MTM2014-52920-P. The
first author is also supported by Fundaci\'on S\'eneca by means of
the program ``Contratos Predoctorales de Formaci\'on del Personal
Investigador'', grant 18910/FPI/13.

\bigskip
\noindent \emph{J. G. Esp\'{\i}n Buend\'{\i}a and V. Jim\'enez
L\'opez's address:} {\sc Departamento de Matem\'a\-ticas,
Universidad de Murcia, Campus de Espinardo, 30100 Murcia, Spain.}

e-mails: {\tt josegines.espin@um.es, vjimenez@um.es}

\end{document}